\newtheorem{theorem}{Theorem}
\newtheorem{proposition}{Proposition}
\newtheorem{remark}{Remark}
\newtheorem{lemma}{Lemma}
\newcommand{\e}{\varepsilon}
\newcommand{\Jrho}{\mathcal{J_\varepsilon}\rho^{\varepsilon,\delta}}
\newcommand{\Jrhouno}{\mathcal{J_\varepsilon} \partial_x\rho^{\varepsilon,\delta}}
\newcommand{\J}{\mathcal{J}}
\newcommand{\JA}{\mathcal{J_\varepsilon}A^{\varepsilon,\delta}}
\newcommand{\JAuno}{\mathcal{J_\varepsilon}\partial_x A^{\varepsilon,\delta}}
\newcommand{\rhoed}{\rho^{\varepsilon,\delta}} 
\newcommand{\Aed}{A^{\varepsilon,\delta}} 
\newcommand{\D}{\,d}
\title[Pressure-Driven Cross-Diffusive Porous-Medium System]{Analysis of a Cross-Nonlinear Porous-Medium System Modeling Pressure-Driven Cell Population Dynamics}
\author[A. B\'ejar-L\'opez]{Alexis B\'ejar-L\'opez}
\address[Alexis B\'ejar-L\'opez]{\newline Departamento de Matem\'atica Aplicada and Research Unit ``Modeling Nature'' (MNat), Facultad de Ciencias, Universidad de Granada, 18071 Granada, Spain}
\email{alexisbejar@ugr.es}
\author[R. Granero-Belinch\'on]{Rafael Granero-Belinch\'on}
\address[Rafael Granero-Belinch\'on]{\newline Departamento  de  Matem\'aticas,  Estad\'istica  y  Computaci\'on,  Universidad  de Cantabria.  Avda.  Los  Castros  s/n,  Santander,  Spain.}
\email{rafael.granero@unican.es}
\author[C. Pulido]{Carlos Pulido}
\address[Carlos Pulido]{\newline Departamento de Matem\'atica Aplicada and Research Unit ``Modeling Nature'' (MNat), Facultad de Ciencias, Universidad de Granada, 18071 Granada, Spain}
\email{cpulidog@ugr.es}
\author[J. Soler]{Juan Soler}
\address[Juan Soler]{\newline Departamento de Matem\'atica Aplicada and Research Unit ``Modeling Nature'' (MNat), Facultad de Ciencias, Universidad de Granada, 18071 Granada, Spain}
\email{Jsoler@ugr.es}
\begin{document}

\overfullrule 1mm

\keywords{nonlinear cross-diffusion models; porous-medium equations; pressure-driven growth; cell dynamics; nonlocal interactions; Sobolev-space well-posedness; finite-time blow-up; spatial support evolution; pattern formation.}

\subjclass[2010]{35K61; 35Q92; 35B36; 35B44; 76S05; 74H35; 92D25; 74L15; 92C15; 92C17}

\thanks{\textbf{Acknowledgment.}  This work has been partially supported by the grants: DMS Grant  1908739, 2049020, 2205694 and 2219397 by the NSF (USA);  by the State Research Agency of the Spanish Ministry of Science and FEDER-EU, project PID2022-137228OB-I00 (MICIU/AEI /10.13039/501100011033); by Modeling Nature Research Unit, Grant QUAL21-011 funded by Consejer\'ia de Universidad, Investigaci\'on e Innovaci\'on (Junta de Andaluc\'ia); by Grant C-EXP-265-UGR23 funded by
Consejer\'ia de Universidad, Investigaci\'on e Innovaci\'on \& ERDF/EU
Andalusia Program, by grant PID2022-141187NB-I00 (MCIN/AEI/10.13039/501100011033/FEDER, UE) and by the Spanish Ministry of Science, Innovation and Universities FPU research grant FPU19/01702 (A. B-L). R.G.B thanks the hospitality of the MNat research unit during his research stay at Universidad de Granada during part of this research was carried out.}

	\begin{abstract}
		In this work, we introduce a cross-diffusion model that couples population density and occupied area to investigate how internal pressure drives growth and motility. By blending nonlinear nonlocal interactions with porous-medium diffusion and an antidiffusive pressure term, the model captures the two-way feedback between local density fluctuations and tissue expansion or contraction. Building on Shraiman’s area-growth paradigm, we enrich the framework with density-dependent spreading at the population boundary and a novel cross-diffusion term, yielding fully nonlinear transport in both equations. We prove local well-posedness for nonnegative solutions in Sobolev spaces and, under higher regularity, show both density and area remain nonnegative. Uniqueness follows when the initial density’s square root lies in $H^2$, even if density vanishes on parts of the domain. We also exhibit initial data that induce finite-time blow-up, highlighting potential singularity formation. Finally, we establish that the density’s spatial support remains invariant and characterize the co-evolution of occupied area and population density domains, offering new insights into pattern formation and mass transport in biological tissues. 
	\end{abstract}
 
	\maketitle
    \tableofcontents
    
    \section{Introduction and Statement of Main Results}
	\label{sec:1}
	
The objective of this paper is to analyze the influence of pressure on a growing, motile population by formulating a cross-diffusion model that couples the evolution of the population density with its occupied area. The resulting system of partial differential equations features competing nonlinear and nonlocal interactions, a porous-medium–type diffusion term, and an antidiffusive (aggregation or contractive) contribution arising from the system’s internal pressure.

The resulting model can therefore be written in terms of the pair $(A,\rho)$, denoting the occupied area and the population density respectively, and takes the form of the following coupled evolution equations:
	  \begin{equation}\label{Original}		
		\begin{aligned}
			\partial_t A&= A(\alpha\rho-\mu\alpha(\rho-\langle \rho \rangle)+\widetilde{\beta}A\left(1-\frac{\rho A}{\widetilde{K}} \right)+\partial_x(\rho \partial_x A) ,\\
			\partial_t\rho&=\beta\ \rho\left(1-\frac{A\rho}{K} \right)-\alpha\rho^2+\mu\alpha\rho(\rho-\langle \rho\rangle)+\partial_x(\rho \partial_{x}\rho),
		\end{aligned}
	\end{equation}
    where, for any even kernel $\Gamma\in L^1(\mathbb{R})$, we write
\[
\langle f\rangle \;=\;\Gamma * f.
\]
This system is subject to the initial data
\[
			 A(0,x)  =A_0,\;\rho(0,x)=\rho_0,
\]
and periodic boundary conditions
\[
			 A(t,-L) =A(t,L),\;\rho(t,-L)=\rho(t,L).
\]
This system builds on the seminal model of Shraiman \cite{Shraiman2005}, in which the internal pressure of a proliferating cell population drives the expansion of its occupied area, and that area growth in turn influences local cell density.  In Shraiman's original framework, cell motility was not explicitly modeled; the dynamics followed only the temporal evolution of density and area.

In \cite{Blanco2021} was performed an extensive numerical study, enriching the model with diverse transport mechanisms. By introducing a porous--medium or flux--saturated term, they achieved finite‐speed propagation of cell fronts--despite the presence of the anti--diffusive term $ \alpha(\rho - \langle\rho\rangle)$ and the fact that, in their formulation, the parameter $\alpha$ itself was a pressure--modulating function. These mobility laws were shown to be decisive in shaping both pattern formation and front dynamics.

In the present work, we further advance Shraiman's area--growth paradigm by incorporating density--dependent spreading at the population's support boundary.  To this end, we endow the density equation with a porous--medium--type diffusion--thereby ensuring finite--speed, concentration--sensitive propagation--and, in recognition of the intrinsic coupling between area and density, introduce a corresponding cross--diffusion term in the area equation.  This novel coupling faithfully captures how local variations in density drive both the expansion and contraction of the tissue's effective area.

Cross‐diffusion systems have become a versatile framework for capturing complex transport and interaction mechanisms in both biological and social contexts. Notable examples include pathogen–chemotaxis models of viral spread \cite{Stancevic2013,Painter2019}, reaction–diffusion descriptions of criminal hot‐spot formation \cite{Short2008,Rodriguez2020}, and multi‐species competition for spatially distributed resources whose availability depends on local population densities \cite{Tsyganov2003,Tania2012,Tello2016,Cintra2018,Tao2019}. See \cite{bellomo2022chemotaxis} for an extensive review of cross-diffusion applications in chemotaxis. In the vast majority of these models, the nonlinear cross‐diffusion term enriches only one equation, while the remaining equations retain standard (linear) diffusion. In contrast, the system we propose here features fully nonlinear transport in both components: one equation includes a self‐diffusion porous‐medium–type, and the other a genuine nonlinear cross‐diffusion coupling, leading to a richer interplay between intra‐ and inter‐population dynamics.

Regarding the analytic theory of cross‐diffusion systems, we briefly highlight several foundational contributions. In \cite{Alasio2022} was established the existence and Sobolev‐regularity of solutions to a class of nonlinear, degenerate parabolic systems featuring both self‐ and cross‐diffusion alongside nonlocal interaction terms.  \cite{Mielke2023} constructed weak and very‐weak solutions for two‐component coupled degenerate parabolic PDEs.  \cite{Winkler2025} proved global‐in‐time existence of continuous weak solutions—under suitable Sobolev regularity of the initial data—for models that combine nonlinear cross‐diffusion with classical linear diffusion. Although the literature on porous medium type equations is large, the works dealing with porous medium type systems are much more scarce. In this regards, the interested reader could also refer to the works \cite{bulivcek2019large,cuvillier2023well,fanelli2022finite,Fanelli2023, kosewski2025local,mielke2023two,mielke2022existence} where a system of porous medium type arising in turbulence is studied. Finally,  \cite{chen2018global} provided a rigorous analysis of multi‐species cross‐diffusion models, deriving global‐in‐time existence results via a refined entropy framework that accommodates the nonlinear coupling. 

When multiple populations share the same environment, a distinct form of pressure—known as homeostatic pressure—arises. In \cite{basan2009homeostatic, ranft2014mechanically} a model based on this pressure is introduced to study the interface between populations. This nonlinear transport framework prescribes a propagation velocity that, via an associated Helmholtz equation, depends on a nonlinear combination of higher-order derivatives of the density. A rigorous analysis of the resulting traveling-wave solutions is carried out in \cite{campos2025biomechanical}.
\subsubsection*{\textbf{Local--in--Time Existence of Solution}}
    The primary goal of this paper is to establish the well‐posedness of system \eqref{Original} for regular solutions in the Sobolev‐space framework.  In particular, we establish local existence of solutions 
\[
(A,\rho)\;\in\;C\bigl([0,T);H^{m-1-s}(I)\bigr)\times C\bigl([0,T);H^{m-s}(I)\bigr)
\]
for every integer $m\ge3$, for every $0<s\ll1$, and any non‐negative initial data 
$\,(A_0,\rho_0)\in H^{m-1}(I)\times H^m(I)$, where $I=[-L,L]$.  
Given the biological interpretation of $\rho$, we restrict our attention to solutions that remain non‐negative for all $t\in[0,T)$.  Under the additional regularity hypothesis $m\ge4$, we prove the existence of such non‐negative solutions.  This statement is the content of Theorem \ref{TheoremExistence}, which we now formulate:

\begin{theorem}[Local existence and nonnegativity]\label{TheoremExistence}
Let $m\ge3$ be an integer, and let
\[
(A_0,\rho_0)\in H^{m-1}(I)\times H^m(I)
\]
be a pair of nonnegative initial data.  Then there exists a maximal time $T>0$ and a solution $(A,\rho)$  to system \eqref{Original} such that
\[
(A,\rho)\in C\bigl([0,T);H^{m-1 -s}(I)\bigr)\times C\bigl([0,T);H^{m -s}(I)\bigr), \quad 0<s\ll1,
\]
and
$$(A,\rho)\in L^\infty \bigl([0,T);H^{m-1}(I)\bigr)\times L^\infty\bigl([0,T);H^m (I)\bigr),$$  satisfying $\rho(t,x)\ge0$, for all $(t,x)\in[0,T)\times I$.  Moreover, if $m\ge4$, then in fact
\[
A(t,x)\ge0
\quad
\text{for all }(t,x)\in[0,T)\times I.
\]
\end{theorem}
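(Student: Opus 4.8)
The plan is to construct solutions by a parabolic regularization and then pass to the limit, keeping careful track of two competing difficulties: the degeneracy of the diffusion, whose coefficient is $\rho$ and may a priori vanish or change sign, and the loss of derivatives inherent in the quasilinear transport terms. First I would regularize the system by mollifying the nonlinearities with a Friedrichs mollifier $\mathcal{J}_\varepsilon$ and adding an artificial viscosity $\delta\partial_x^2$ to both equations, producing a semilinear parabolic system whose right-hand side is locally Lipschitz on $H^{m}(I)\times H^{m-1}(I)$; a Picard/Cauchy--Lipschitz argument in this Banach space then yields smooth approximate solutions $(\rho^{\varepsilon,\delta},A^{\varepsilon,\delta})$ on a short time interval.

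The core of the existence proof is a family of energy estimates in $H^{m}\times H^{m-1}$ uniform in $\varepsilon$ and $\delta$. Differentiating the $\rho$-equation $m$ times and pairing with $\partial_x^m\rho$, the top-order contribution of the porous-medium term is, after one integration by parts, $-\int_I \rho\,(\partial_x^{m+1}\rho)^2\,dx$ together with a remainder of the form $\tfrac12\int_I \partial_x^2\rho\,(\partial_x^m\rho)^2\,dx$; the first integral has a favourable sign precisely when $\rho\ge0$ and may be discarded, while the remainder and all commutators are controlled by $\|\rho\|_{H^m}$ and $\|A\|_{H^{m-1}}$ through Moser/Kato--Ponce product and commutator estimates. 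The cross-diffusion term in the $A$-equation is handled analogously, again using $\rho\ge0$ to discard its good-signed top-order piece, and the nonlocal terms $\langle\rho\rangle=\Gamma*\rho$ are estimated by Young's convolution inequality since $\Gamma\in L^1$. A Grönwall argument then produces a time $T>0$ and bounds uniform in the regularization parameters; Banach--Alaoglu gives weak-$*$ limits in $L^\infty([0,T);H^m)\times L^\infty([0,T);H^{m-1})$, Aubin--Lions upgrades these to strong convergence in lower norms, and interpolation between the uniform $H^m$ bound and the equation-driven time regularity yields the claimed strong continuity in $H^{m-s}\times H^{m-1-s}$, the loss of $s$ being the usual price for a quasilinear problem.

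The decisive structural point -- and the reason nonnegativity must be addressed before, not after, the uniform estimates -- is that the sign of the leading dissipation depends on $\rho\ge0$. I would therefore establish $\rho^{\varepsilon,\delta}\ge0$ directly on the regularized system: since $\delta>0$ makes the equation strictly parabolic and every reaction term in the $\rho$-equation carries a common factor $\rho$, so that $\rho\equiv0$ is a stationary solution, the parabolic minimum principle -- applied at a first interior touching point, where $\partial_x\rho=0$ and $\partial_x^2\rho\ge0$ force $\partial_t(\min_x\rho)\ge0$ -- keeps $\rho^{\varepsilon,\delta}$ nonnegative, and this passes to the limit. Here the naive alternative of testing against the negative part $\rho_-$ genuinely fails: the self-diffusion generates the wrong-signed term $\int_I \rho_-(\partial_x\rho_-)^2\,dx$, which cannot be absorbed, so a pointwise maximum-principle argument is unavoidable.

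For the nonnegativity of $A$ I would exploit that, once $\rho\ge0$ is known, the $A$-equation is a linear degenerate-parabolic equation $\partial_t A=\partial_x(\rho\,\partial_x A)+A\,h$ with nonnegative diffusion coefficient $\rho$ and a reaction proportional to $A$; the same minimum-principle argument -- at a first point where $A$ touches $0$ one has $\partial_x A=0$, $\partial_x^2A\ge0$, hence $\partial_t A=\rho\,\partial_x^2A\ge0$ -- gives $A\ge0$. The main obstacle, and the origin of the stronger hypothesis, is regularity: this pointwise argument requires $A$ to be twice continuously differentiable in space, i.e. the embedding $H^{m-1-s}(I)\hookrightarrow C^2(I)$, which in one dimension holds only for $m-1-s>\tfrac52$, that is $m\ge4$. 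Since $\rho$ carries one more derivative than $A$ throughout ($H^{m-s}$ versus $H^{m-1-s}$), the same $C^2$ requirement is already met by $\rho$ at $m\ge3$, which is exactly the asymmetry recorded in the statement.
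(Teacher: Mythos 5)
Your overall architecture (mollify, enforce a sign, uniform estimates, Aubin--Lions, pointwise touching-point arguments, and the $H^{m-1-s}(I)\hookrightarrow C^2(I)$ explanation of why $A$ needs $m\ge4$) matches the paper's, but two of your key steps contain genuine gaps.

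First, the energy estimate. You claim the degenerate dissipation $-\int_I\rho\,(\partial_x^{m+1}\rho)^2\,dx$ ``may be discarded'' and that all remaining terms are controlled by Moser/Kato--Ponce estimates through $\|\rho\|_{H^m}$ and $\|A\|_{H^{m-1}}$. This fails for the logistic coupling $-\tfrac{\beta}{K}A\rho^2$ in the $\rho$-equation: pairing its $m$-th derivative with $\partial_x^m\rho$ produces $\int_I\partial_x^m A\,\rho^2\,\partial_x^m\rho\,dx$, which cannot be bounded by $\|A\|_{H^{m-1}}\|\rho\|_{H^m}^2$ because $A$ carries only $m-1$ derivatives. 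Integrating by parts trades it for $\int_I\partial_x^{m-1}A\,\rho^2\,\partial_x^{m+1}\rho\,dx$, which now exceeds the $H^m$ control of $\rho$ by one derivative; the only way to close is the weighted Young inequality
\[
\Bigl|\int_I\partial_x^{m-1}A\,\rho^{3/2}\,\rho^{1/2}\,\partial_x^{m+1}\rho\,dx\Bigr|
\;\le\;
\int_I\rho\,(\partial_x^{m+1}\rho)^2\,dx
\;+\;\tfrac14\int_I(\partial_x^{m-1}A)^2\rho^3\,dx,
\]
i.e.\ absorbing precisely into the dissipation you discarded (and using the pointwise sign of $\rho$ to split $\rho^2=\rho^{3/2}\rho^{1/2}$). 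This absorption is the central mechanism of the paper's a priori estimates. Note also that your artificial viscosity $\delta\partial_x^2$ cannot substitute for it: absorbing into $-\delta\int_I(\partial_x^{m+1}\rho)^2\,dx$ produces constants of size $\delta^{-1}$, which destroys uniformity as $\delta\to0$.

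Second, the nonnegativity of the approximations. Your minimum-principle argument at a first touching point requires the equation to be local there, but you mollified the nonlinearities: at the touching point $x_0$, $\partial_t\rho^{\varepsilon,\delta}(t_0,x_0)$ equals $\mathcal{J}_\varepsilon(\cdots)(x_0)$, a spatial average of quantities that do not vanish at $x_0$ merely because $\rho^{\varepsilon,\delta}(t_0,x_0)=0$; for instance the mollification of $-\alpha(\mathcal{J}_\varepsilon\rho^{\varepsilon,\delta})^2$ is $\le0$ and generically strictly negative at $x_0$. After mollification it is no longer true pointwise that ``every reaction term carries a factor $\rho$,'' so the touching-point argument does not close, and with it your whole chain (sign first, then estimates) breaks at the regularized level. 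The paper avoids exactly this: it does not prove a sign for the $\varepsilon$-level solutions by a maximum principle, but instead lifts the initial data by $\delta$ (so the approximations start strictly positive) and includes $\|1/\rho^{\varepsilon,\delta}\|_{L^\infty}$ and $\|1/A^{\varepsilon,\delta}\|_{L^\infty}$ in the energy functional, so that strict positivity persists on a time interval independent of $\varepsilon$. The pointwise minimum argument you describe (including the version for $A$, and your correct account of the $m\ge4$ regularity requirement) is applied only after the limit $\varepsilon\to0$, when the $\delta$-level system is local and that computation is legitimate.
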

\begin{remark}
    Note, by combining the system’s equations with the established regularity of the solutions, we also deduce that
    \[
\left(\frac{\partial A}{\partial t},\frac{\partial\rho} {\partial t}\right)\in C\bigl((0,T);H^{m-3-s}(I)\bigr)\times C\bigl((0,T);H^{m-2-s}(I)\bigr)
\]
\end{remark}

The proof of Theorem \ref{TheoremExistence} proceeds via a two‐parameter regularization and subsequent limit passage. We introduce a family of approximate problems, indexed by a smoothing parameter $\varepsilon>0$ and a positivity‐enforcing parameter $\delta>0$, that converge to system \eqref{Original}. The $\varepsilon$–regularization mollifies the equations, while adding $\delta$ to the initial data guarantees strict positivity. This positivity activates the coercivity of the parabolic term, from which we derive uniform energy estimates independent of $\varepsilon$ and $\delta$. These bounds allow us to extract a convergent subsequence and pass to the limit by compactness arguments. See Section~\ref{sec:2} for further details. Importantly, this method underlines the necessity of preserving strict positivity to control the degenerate nonlinearities and ensures that the resulting well‐posedness theory aligns with the biological constraint $\rho\ge0$.  The strong regularity we require and propagate in time is crucial for applying pointwise estimates in the demonstration of finite‐time singularity formation (cf. Theorem 3).  Moreover, one of
our main conclusions is that any alteration of the solution’s support
can occur only through the development of a singularity.

\subsubsection*{\textbf{Uniqueness in the Local Setting}}
The following result shows that the solution from Theorem~\ref{TheoremExistence} is unique once we impose the additional regularity
\[
\sqrt{\rho_{0}}\in H^{2}(I).
\]

\begin{theorem}[Uniqueness]\label{TheoremUniqueness}
Under the assumptions of Theorem~\ref{TheoremExistence}, if furthermore
\[
\sqrt{\rho_{0}}\in H^{2}(I),
\]
then  there exists a maximal time $T>0$ such that the solution satisfies
\[
\sqrt{\rho}\in C \bigl([0,T);H^{2-s}(I)\bigr) \cap L^\infty \bigl([0,T);H^{2}(I)\bigr),
\]
and the pair $(A,\rho)$ is uniquely determined.
\end{theorem}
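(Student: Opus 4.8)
The plan is to establish the two assertions in turn: first the propagation of the regularity $\sqrt{\rho_{0}}\in H^{2}$, and then the uniqueness that this regularity unlocks. Throughout I would argue on the two–parameter regularized system $(\Aed,\rhoed)$ built for Theorem~\ref{TheoremExistence}, where $\rhoed\ge\delta>0$ and every manipulation below is classical, derive estimates uniform in $(\varepsilon,\delta)$, and pass to the limit exactly as there.

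For the first part, set $w=\sqrt{\rho}$ (at the regularized level $w^{\varepsilon,\delta}=\sqrt{\rhoed}$, which is smooth and bounded away from $0$). Dividing the density equation by $2\sqrt{\rho}$ shows that $w$ solves a quasilinear parabolic equation whose principal part is $w^{2}\partial_{x}^{2}w$ and whose reaction contributions each retain a factor of $w$—because every reaction term in the $\rho$–equation is divisible by $\rho$—so that no singularity is created where $\rho$ vanishes. An $H^{2}$ energy estimate then closes: differentiating twice and pairing with $\partial_{x}^{2}w$, the leading term integrates by parts to $-\int w^{2}(\partial_{x}^{3}w)^{2}\le 0$, while all commutator and reaction terms are controlled through the embedding $H^{2}(I)\hookrightarrow C^{1}(I)$ and Young's inequality. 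Passing to the limit and interpolating the uniform $L^{\infty}(H^{2})$ bound against the time-regularity of $\partial_{t}w$ (read off from the equation, as for Theorem~\ref{TheoremExistence}) yields $\sqrt{\rho}\in C([0,T);H^{2-s})\cap L^{\infty}([0,T);H^{2})$. The decisive structural gain is the identity $\partial_{x}\rho=2\sqrt{\rho}\,\partial_{x}w$, which gives the pointwise control $|\partial_{x}\rho|\le C\sqrt{\rho}$ and quantifies how $\rho$ degenerates at the boundary of its support.

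For uniqueness, take two solutions $(A_{1},\rho_{1})$, $(A_{2},\rho_{2})$ with the same data, write $\bar\rho=\rho_{1}-\rho_{2}$, $\bar A=A_{1}-A_{2}$, and run a Gr\"onwall argument on $\tfrac12(\|\bar\rho\|_{L^{2}}^{2}+\|\bar A\|_{L^{2}}^{2})$. In the density equation the self–diffusion splits as $\partial_{x}(\rho_{1}\partial_{x}\bar\rho)+\partial_{x}(\bar\rho\,\partial_{x}\rho_{2})$; pairing with $\bar\rho$ produces the dissipative term $-\int\rho_{1}(\partial_{x}\bar\rho)^{2}\le 0$ together with $\tfrac12\int\bar\rho^{2}\partial_{x}^{2}\rho_{2}$, which is harmless because Theorem~\ref{TheoremExistence} already gives $\rho_{2}\in H^{m}\hookrightarrow W^{2,\infty}$ for $m\ge3$. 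The reaction and nonlocal terms are Lipschitz in $(\bar\rho,\bar A)$ (the convolution $\langle\cdot\rangle=\Gamma*\cdot$ is bounded on $L^{2}$ since $\Gamma\in L^{1}$), so they contribute only multiples of $\|\bar\rho\|_{L^{2}}^{2}+\|\bar A\|_{L^{2}}^{2}$; the same is true of every reaction term in the area equation.

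The one genuinely delicate contribution—and the place where $\sqrt{\rho}\in H^{2}$ is indispensable—is the cross–diffusion term $\partial_{x}(\bar\rho\,\partial_{x}A_{2})$ in the area difference equation. Pairing it with $\bar A$ and attempting to absorb it into the dissipation $-\int\rho_{1}(\partial_{x}\bar A)^{2}$ forces a weight $1/\sqrt{\rho_{1}}$, hence a term of the type $\int \bar\rho^{2}(\partial_{x}A_{2})^{2}/\rho_{1}$, singular where $\rho_{1}$ vanishes. The strategy is to factor the degeneracy out of the difference by writing $\bar\rho=(\sqrt{\rho_{1}}+\sqrt{\rho_{2}})(\sqrt{\rho_{1}}-\sqrt{\rho_{2}})$, so that the dangerous quotient becomes $\bar\rho/\sqrt{\rho_{1}}=(1+\sqrt{\rho_{2}}/\sqrt{\rho_{1}})(\sqrt{\rho_{1}}-\sqrt{\rho_{2}})$; the whole difficulty then reduces to showing that this quantity stays in $L^{2}$ and is controlled by the energy, which is precisely what the propagated regularity $\sqrt{\rho_{i}}\in C([0,T);H^{2})\hookrightarrow C^{1}$ and the degeneracy structure of the density are there to provide. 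I expect this interplay between cross–diffusion and degeneracy to be the main obstacle of the proof. Once it is settled, Gr\"onwall forces $\bar\rho\equiv\bar A\equiv0$, giving uniqueness, while the regularity $\sqrt{\rho}\in C([0,T);H^{2-s})\cap L^{\infty}([0,T);H^{2})$ is already established in the first part.
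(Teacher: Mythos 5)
Your first part (propagation of $\sqrt{\rho_0}\in H^2$) is essentially the paper's own Lemma on $\eta=\sqrt{\rho}$: same change of variables, same $H^2$ energy estimate closed by the dissipation $-\int\eta^2(\partial_x^3\eta)^2\,dx$. The gap is in the uniqueness part, and it is twofold. First, your Gr\"onwall functional $\|\bar\rho\|_{L^2}^2+\|\bar A\|_{L^2}^2$ cannot close. As you yourself observe, the cross-diffusion term $-\int\bar\rho\,\partial_x A_2\,\partial_x\bar A\,dx$ forces the square-root difference $N=\sqrt{\rho_1}-\sqrt{\rho_2}$ into the estimate (via $\bar\rho=N(\eta_1+\eta_2)$), so the best available bound is of the form $(\text{dissipation})+C\|N\|_{L^2}^2$. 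But $\|N\|_{L^2}^2$ is \emph{not} controlled by $\|\bar\rho\|_{L^2}^2$ on the degenerate set: the only pointwise bound is $|N|\le\sqrt{|\bar\rho|}$, which gives $\|N\|_{L^2}^2\lesssim\|\bar\rho\|_{L^2}$ and hence a differential inequality of the type $\dot y\lesssim\sqrt{y}$, which does not force $y\equiv0$ from $y(0)=0$ (Osgood fails). This is exactly why the paper takes the energy to be $\|\Delta\|_{L^2}^2+\|N\|_{L^2}^2$ — measuring the density difference at the level of square roots — and then derives and estimates a full evolution equation for $N$, exploiting its own dissipation $-\int(\partial_x N)^2(\eta_1^2+\eta_2^2)\,dx$. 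That equation and its estimate are a substantial piece of the proof that is entirely absent from your proposal.

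Second, your proposed resolution of the delicate term — factoring $\bar\rho/\sqrt{\rho_1}=(1+\sqrt{\rho_2}/\sqrt{\rho_1})(\sqrt{\rho_1}-\sqrt{\rho_2})$ and invoking the propagated $C^1$ regularity of $\sqrt{\rho_i}$ — does not work: the ratio $\sqrt{\rho_2}/\sqrt{\rho_1}$ is not bounded by any norm of the solutions. Before uniqueness is established there is no reason the two solutions degenerate at comparable rates; e.g.\ $\rho_1\sim d^4$ and $\rho_2\sim d^2$ near a common zero have both square roots in $C^1$ yet an unbounded ratio. The correct move, which the paper uses, is never to divide by $\rho_1$: write $\bar\rho=N(\eta_1+\eta_2)$, pair the factor $(\eta_1+\eta_2)$ with $\partial_x\Delta$ in a weighted Young inequality, and absorb $(\eta_1+\eta_2)^2\le2(\rho_1+\rho_2)$ into the dissipation. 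Note that this absorption requires the dissipation weight $\rho_1+\rho_2$, not $\rho_1$; accordingly the paper first \emph{symmetrizes} the difference equation (writes it with both splittings, $\rho_1\partial_x\bar A+\bar\rho\,\partial_x A_2$ and $\rho_2\partial_x\bar A+\bar\rho\,\partial_x A_1$, and averages) so that the diffusion term becomes $\partial_x\bigl((\rho_1+\rho_2)\partial_x\Delta\bigr)$. With your one-sided splitting the available dissipation $-\int\rho_1(\partial_x\bar A)^2\,dx$ cannot absorb the $\rho_2$-part of the weight, so the estimate fails even after switching to the variable $N$. These two missing ingredients — the $(\Delta,N)$ energy with an equation for $N$, and the symmetrization — are the heart of the paper's uniqueness argument.
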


The proof hinges on an $L^2$‐stability estimate for the system satisfied by $(A,\sqrt{\rho})$. Note that our assumption permits $\rho$ to vanish, since we only require $\sqrt{\rho}\in H^2$ and do not assume strict positivity. 

To complete our well‐posedness analysis of \eqref{Original}, we show that the local solution provided by Theorem \ref{TheoremExistence} need not extend globally in time.  More precisely, one can exhibit initial data for which finite‐time blow‐up occurs:

\subsubsection*{\textbf{Non-Global Existence in Time}} Regarding the existence of finite‐time singularities, we can stablish that
there exist nonnegative initial data 
\[
(A_0,\rho_0)\in H^{m-1}(I)\times H^m(I),
\qquad m\ge4,
\]
such that the corresponding solution from Theorem \ref{TheoremExistence} becomes singular in finite time.
The proof of this result, which uses pointwise methods, proceeds by selecting $\rho_0$ such  that it vanishes sharply at some point and whose second derivative there exceeds a critical threshold.  Assuming no other singularity precedes it, one then shows that $\partial_x^2\rho(t,0)\to+\infty$ as $t$ approaches a finite blow‐up time.  The precise conditions on $\rho_0$ are given in Theorem \ref{TheoremBlowUp}. See also Fig.~\ref{fig-1}, which illustrates the results of this theorem.

\subsubsection*{\textbf{Dynamics of the Spatial Support}}
Next, we examine how the spatial supports of $\rho$ and $A$ evolve over time. In particular, we determine whether their positivity sets can spread beyond the initial domain or remain confined—a question that is pivotal for understanding pattern formation, mass transport, and the effects of nonlocal interactions.

The following theorem asserts that the support of 
$\rho$ remains invariant over the existence interval on which the solution’s regularity is preserved. We also show that support of $A$ reaches the support of $\rho$ for any positive time under certain hypothesis on initial data.

\begin{theorem}[Dynamics of support]\label{TheoremSupport}
Suppose
\[
(A_0,\rho_0)\,\in\,H^3(I)\times H^4(I),
\qquad
\sqrt{\rho_0}\,\in\,H^2(I),
\]
and let $T>0$ be such that the statements of Theorems \ref{TheoremExistence} and \ref{TheoremUniqueness} are satisfied.
Then for every $t\in[0,T)$,
\[
\operatorname{supp}\rho(t,\cdot)
\;=\;
\operatorname{supp}\rho_0.
\]
Moreover, if $\rho_0$ is supported on an interval and 
$\displaystyle\operatorname{supp}A_0\subset\operatorname{supp}\rho_0$,
then one proves that 
\[
\operatorname{supp}A(t)
=\operatorname{supp}\rho(t)
\quad\forall\,t\in(0,T).
\]
\end{theorem}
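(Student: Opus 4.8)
The plan is to prove the two assertions by different mechanisms: the invariance of $\operatorname{supp}\rho$ through a Lagrangian (characteristics) argument exploiting the continuity-equation structure of the density equation, and the identity $\operatorname{supp}A(t)=\operatorname{supp}\rho(t)$ through the parabolic nature of the area equation, which is genuinely (non-degenerately) parabolic precisely where $\rho>0$. First I would recast the density equation in transport form. Writing the zeroth-order part as $\rho\,g$ with $g=\beta(1-A\rho/K)-\alpha\rho+\mu\alpha(\rho-\langle\rho\rangle)$ and expanding $\partial_x(\rho\,\partial_x\rho)=(\partial_x\rho)^2+\rho\,\partial_x^2\rho$, the equation reads $\partial_t\rho+u\,\partial_x\rho=\rho\,(\partial_x^2\rho+g)$ with transport velocity $u:=-\partial_x\rho$. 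By Theorem~\ref{TheoremExistence} with $m=4$ one has $\rho\in L^\infty([0,T);H^4(I))\cap C([0,T);H^{4-s}(I))$, and the one-dimensional Sobolev embedding yields $\rho(t,\cdot)\in C^2(I)$ uniformly in $t$; hence $u$ is Lipschitz in $x$, uniformly in $t$, and continuous in $(t,x)$. Cauchy--Lipschitz then produces a unique flow $\Phi_t$ for $\dot X=u(t,X)$, and since $u\in C^1_x$ each $\Phi_t$ is an orientation-preserving $C^1$-diffeomorphism of the circle $I$.

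Along a trajectory $X(t)=\Phi_t(x_0)$ the chain rule gives $\frac{d}{dt}\rho(t,X(t))=\rho\,(\partial_x^2\rho+g)(t,X(t))$, i.e.\ a linear ODE $\dot w=h(t)\,w$ with $h:=\partial_x^2\rho+g\in L^\infty$; Grönwall's inequality then shows $\rho(t,X(t))>0$ iff $\rho_0(x_0)>0$. Consequently $\Phi_t$ maps $\{\rho_0>0\}$ bijectively onto $\{\rho(t)>0\}$, and taking closures $\operatorname{supp}\rho(t)=\Phi_t(\operatorname{supp}\rho_0)$, so that $\partial\operatorname{supp}\rho(t)=\Phi_t(\partial\operatorname{supp}\rho_0)$. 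It remains to check that $\Phi_t$ fixes this boundary. If $x_0\in\partial\operatorname{supp}\rho_0$ then $X(t)\in\partial\operatorname{supp}\rho(t)$, where $\rho(t,\cdot)=0$; since $\rho(t,\cdot)\ge0$ is $C^1$ — equivalently $\sqrt{\rho}\in H^2\subset C^1$ forces the quadratic vanishing $\partial_x\rho=2\sqrt{\rho}\,\partial_x\sqrt{\rho}=0$ there — the velocity obeys $\dot X(t)=-\partial_x\rho(t,X(t))=0$. Hence $X(t)\equiv x_0$, so $\partial\operatorname{supp}\rho(t)=\partial\operatorname{supp}\rho_0$; because $\Phi_t$ is an orientation-preserving homeomorphism it maps each complementary gap onto itself, and therefore $\operatorname{supp}\rho(t)=\operatorname{supp}\rho_0$.

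For the area, write $\operatorname{supp}\rho_0=[a,b]$, so by the previous step $\operatorname{supp}\rho(t)=[a,b]$ and $\rho>0$ on $(a,b)$ for all $t$. Outside $[a,b]$ we have $\rho\equiv0$ on an open set, whence the cross-diffusion flux $\rho\,\partial_x A$ vanishes and the area equation reduces to the pointwise ODE $\partial_t A=A\,(\mu\alpha\langle\rho\rangle+\widetilde{\beta}A)$; since $\operatorname{supp}A_0\subset[a,b]$ gives $A_0=0$ there, uniqueness for this ODE forces $A(t,\cdot)=0$, i.e.\ $\operatorname{supp}A(t)\subseteq[a,b]$. For the reverse inclusion, on every compact subinterval of $(a,b)$ we have $\rho\ge c>0$, so that $\partial_t A=\rho\,\partial_x^2A+(\partial_x\rho)\,\partial_x A+c(t,x)\,A$ is uniformly parabolic there with bounded coefficients; as $A\ge0$ (Theorem~\ref{TheoremExistence} with $m\ge4$) and $A_0\not\equiv0$, the strong maximum principle gives $A(t,x)>0$ for every $x\in(a,b)$ and $t>0$. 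Passing to closures and combining the two inclusions yields $\operatorname{supp}A(t)=[a,b]=\operatorname{supp}\rho(t)$ for all $t\in(0,T)$.

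The main obstacle is the rigorous handling of the free boundary in the first assertion: one must verify that the Lagrangian flow carries $\partial\operatorname{supp}\rho_0$ onto $\partial\operatorname{supp}\rho(t)$ and that the transport velocity vanishes there, which is exactly where the propagated regularity $\rho\in C^2$ (sharpened by $\sqrt{\rho}\in H^2$) is indispensable: without it a Lipschitz corner at the front — as in the Barenblatt profile of the porous-medium equation — would produce a nonzero Darcy velocity $-\partial_x\rho$ and hence an expanding support. This is the analytic embodiment of the paper's principle that the support can change only through a singularity. A secondary, easily dispatched technical point is the degeneracy of the area diffusion at the endpoints $a,b$, which forces one to apply the strong maximum principle on subintervals $[a+\eta,b-\eta]$ and let $\eta\to0$.
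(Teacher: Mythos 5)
Your proposal is correct, but its first half follows a genuinely different route from the paper's. For the invariance of $\operatorname{supp}\rho$, the paper (Proposition \ref{SupportRho}) argues in Eulerian variables: it rewrites the density equation as $\partial_t\rho=\rho\,G$ with
\[
G=\beta\Bigl(1-\tfrac{A\rho}{K}\Bigr)-\alpha\rho+\mu\alpha\bigl(\rho-\langle\rho\rangle\bigr)+\partial_x^2\rho+4\bigl(\partial_x\sqrt{\rho}\bigr)^2,
\]
so that $\rho(t,x)=\rho_0(x)\exp\bigl(\int_0^t G\,ds\bigr)$ pointwise in $x$, and the whole zero set (not merely the support) is frozen in place; the hypothesis $\sqrt{\rho_0}\in H^2$ is what makes this work, since it is exactly what bounds $4(\partial_x\sqrt{\rho})^2=(\partial_x\rho)^2/\rho$ via Lemma \ref{RootLemma}. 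You instead absorb the troublesome $(\partial_x\rho)^2$ into a transport velocity $u=-\partial_x\rho$ and run characteristics; the coefficient along trajectories is $\partial_x^2\rho+g$, bounded using only the $H^4$-regularity of Theorem \ref{TheoremExistence}, so this half of your argument does not actually need $\sqrt{\rho}\in H^2$ --- a genuine gain in generality --- at the cost of extra topological bookkeeping (well-posedness of the flow, boundary points of the support being stationary because $\partial_x\rho$ vanishes at zeros of a nonnegative $C^1$ function, orientation-preserving homeomorphisms mapping complementary gaps to themselves). Conversely, the paper's formula yields the stronger pointwise conclusion $\{\rho(t,\cdot)=0\}=\{\rho_0=0\}$, whereas the flow only pins down the support. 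Your second half coincides in strategy with the paper's Proposition \ref{SupportA}: the inclusion $\operatorname{supp}A(t)\subset\operatorname{supp}\rho_0$ comes from the same ODE-uniqueness argument on the open set where $\rho\equiv0$, and the reverse inclusion from the strong parabolic minimum principle on compact subintervals where $\rho\ge c>0$. Your version is in fact streamlined: by treating the reaction terms as a bounded zeroth-order coefficient $c(t,x)$ (removable by the usual $e^{-\lambda t}$ substitution), you bypass the paper's decomposition $\Omega=\{A_0\rho_0<\widetilde K/2\}$ and its short-time patching via the lower bound in item (i). Two cosmetic slips, both harmless and shared in spirit with the paper: the reduced ODE outside $\operatorname{supp}\rho_0$ is $\partial_t A=A(\mu\alpha\langle\rho\rangle+\widetilde\beta)$, not $A(\mu\alpha\langle\rho\rangle+\widetilde\beta A)$ (either way $A\equiv0$ is the unique solution), and you implicitly assume $A_0\not\equiv0$ and $\rho_0>0$ on $\operatorname{int}(\operatorname{supp}\rho_0)$, which is the hypothesis of the paper's Proposition \ref{SupportA} rather than the looser wording of the theorem.
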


A pivotal element in the proof is to recast the $\rho$–equation in its mild (integral) form. By exploiting the $H^2$–regularity of $\sqrt{\rho}$ together with the continuity of $A$ and $\rho$, one shows that any zero of $\rho_0$ is preserved for all $t\in[0,T)$, so that 
\[
\operatorname{supp}\rho(t)\;\subset\;\operatorname{supp}\rho_0.
\]

This invariance encapsulates the natural (biological) constraint that the density cannot occupy regions devoid of area and underscores the intrinsic spatial coupling between $A$ and $\rho$. 
On the other hand, the proof of the evolution of the support of $A$ hinges on two principal ideas. First, in any region where $A_0>0$, the strict positivity of $A$ is preserved for all $t\in[0,T)$. Second, on each compact subset where $\rho$ remains uniformly bounded away from zero, the equation for $A$ becomes uniformly parabolic. An application of the parabolic maximum principle then shows that $A$ instantaneously becomes strictly positive at every point where $A_0$ originally vanished.
The complete and rigorous statements and proofs of these results are given in Proposition \ref{SupportRho} and Proposition \ref{SupportA}. See also Fig.~\ref{fig-2} that illustrates these results.

\subsection{Overview of the Model’s Foundations}

Let us now briefly outline the model derivation. Following Shraiman's approach, we regard the cell population as an elastic continuum subject to spatially heterogeneous growth. By minimizing the total elastic strain energy, one finds that the resulting deformation drives the evolution of the occupied area according to:
\[
\partial_t A = A \left[\alpha\rho-\frac{\gamma\kappa}{\gamma+\kappa}(\alpha\rho-\alpha\langle\rho\rangle)\right] .
\]
Here, $\rho$ denotes the population density. Writing 
\[
\rho = \frac{N}{A},
\]
with $N$ the total cell number, and assuming logistic proliferation of the form 
$\tfrac{dN}{dt}=\beta N\bigl(1-\tfrac{N}{K}\bigr)$,
one obtains
\[
\partial_t N = \beta_1 N\Big(1-\frac{N}{K_1}\Big)
\]
Recasting the above relation in terms of the population density yields:

$$
\partial_t \rho=\beta_1\rho \Big(1-\frac{\rho A}{K_1}\Big)-\alpha\rho^2-\alpha\frac{\gamma \kappa}{\gamma+\kappa}\rho(\alpha\rho-\alpha\langle\rho\rangle)
$$
Hence, under the assumption that spatially heterogeneous growth induces elastic deformation, Shraiman's area--evolution model can be written as
:\begin{align*}
	\partial_t A &= A \left[\alpha\rho-\frac{\gamma\kappa}{\gamma+\kappa}(\alpha\rho-\alpha\langle\rho\rangle)\right]\\
	\partial_t \rho&=\beta_1\rho \left(1-\frac{\rho A}{K_1}\right)-\alpha\rho^2+\alpha\frac{\gamma\kappa}{\gamma+\kappa}\rho(\alpha\rho-\alpha\langle\rho\rangle)
\end{align*}
The subsequent refinement in \cite{Blanco2021} involved augmenting the density equation with both advective and diffusive transport terms, followed by a systematic parameter study calibrated to biologically measured tissue mechanics.  In particular, the tissue's elastic behavior is characterized by the bulk modulus $\kappa$ and shear modulus $\gamma$, which for many cell aggregates are (see \cite{Blanco2021}) on the order of
\[
\kappa \approx 3.33\ \mathrm{kPa},
\qquad
\gamma \approx 0.34\ \mathrm{kPa}.
\]
These values indicate that volumetric deformations are nearly an order of magnitude stiffer than shear deformations, reflecting the tissue's almost incompressible response.

Accordingly, introducing the effective coupling constant
\[
\mu \;=\;\frac{\gamma\,\kappa}{\gamma + \kappa}\,\in(0,1),
\]
the area--evolution equation takes the succinct form
\[
\partial_t A = A \left[\alpha\rho-\mu(\alpha\rho-\alpha\langle\rho\rangle)\right]
\]
Note that whenever $\rho>0$, each term on the right--hand side is strictly positive, so with biologically realistic parameter values the model predicts unbounded area growth. To prevent this runaway expansion, one appends a logistic--type saturation term to the area equation:
\[
\partial_t A = A \left[\alpha\rho-\mu(\alpha\rho-\alpha\langle\rho\rangle)\right]+\widetilde{\beta}A\left(1-\frac{\rho A}{\widetilde{K}} \right) .
\]
Here, the area's carrying capacity scales inversely with the local cell density: higher $\rho$ corresponds to increased compression of cells and thus a reduced maximal area.

Consequently, the density equation is modified to:
$$
\partial_t \rho =\beta_1\rho \left(1-\frac{\rho A}{K_1}\right)-\alpha\rho^2+\alpha\mu\rho(\alpha\rho-\alpha\langle\rho\rangle)-\widetilde{\beta}\rho\left(1-\frac{\rho A}{\widetilde{K}} \right).
$$
Introducing the shifted parameters
\[
\beta = \beta_{1} - \widetilde\beta,
\qquad
K = \bigl(\tfrac1{K_{1}} - \tfrac1{\widetilde K}\bigr)^{-1},
\]
and assuming $\beta>0$ and $K>0$, the governing equations become
 \[
		\partial_t A= A(\alpha\rho-\mu\alpha(\rho-\langle \rho \rangle)+\widetilde{\beta}A\left(1-\frac{\rho A}{\widetilde{K}} \right) ,\\
\]
for the area, and
\[
		\partial_t\rho=\beta\ \rho\left(1-\frac{A\rho}{K} \right)-\alpha\rho^2+\mu\alpha\rho(\rho-\langle \rho\rangle),
\]
for the density.

Finally, extending the transport--enriched framework of \cite{Shraiman2005}, we endow the density equation with porous--medium--type diffusion and introduce a density--dependent diffusion term into the area equation, yielding system \eqref{Original}. 

Porous--medium--type diffusion captures the fact that cells migrating through a tissue--modeled as a porous medium--must first reorganize within their initial support, generating internal pressure that concentrates at the boundary. For a comprehensive introduction to porous‐medium equations, see \cite{vazquez2007porous}. This pressure accumulation produces a loss of regularity in the density profile--specifically, blow--up of the second spatial derivative--which then dynamically initiates propagation. 

As Section 5's simulations demonstrate, the density profile steepens until a curvature singularity appears at the frontier, in agreement with the finite--time blow--up described in Theorem \ref{TheoremBlowUp}. By coupling area transport directly to density flux, we ensure that the occupied area evolves in lockstep with cellular rearrangements and remains confined to the density support throughout (cf. Theorem \ref{TheoremSupport}). Crucially, the area does not evolve autonomously but is confined to the density, reflecting the fundamental mechanical interdependence of tissue deformation and cell packing.

\section{Local--in--Time Existence of Solutions}
	\label{sec:2}

As noted in the Introduction, the local--in--time existence of solutions is established by applying a compactness argument to a family of suitably regularized approximations of \eqref{Original}.  To this end, 
we denote $\J_\e$ the periodic heat kernel at time $t=\e$. We then replace each occurrence of $A$ and $\rho$ in \eqref{Original} by the convolution $\J_\e A$, $\J_\e\rho$, thereby obtaining a sequence of smooth, uniformly parabolic regularized problems.  By deriving suitable energy estimates and invoking compactness arguments, we can extract a convergent subsequence whose limit is a solution of the original system.

	Under this notation,  we introduce the following regularized problem
	\begin{equation}\label{RegularizedEpsDelta}
			\begin{split}
			 \partial_t A^{\varepsilon,\delta}=& \mathcal{J_\varepsilon}\left( \JA(\alpha\Jrho-\mu\alpha(\Jrho-\langle \Jrho \rangle))\right)\\
			&+\hspace{-0,1cm}\mathcal{J_\varepsilon}\left(\widetilde{\beta}\JA\left(1\hspace{-0,1cm}-\hspace{-0,1cm}\frac{\Jrho \JA}{\widetilde{K}} \right)\hspace{-0,1cm}+\hspace{-0,1cm} \partial_x(\Jrho \JAuno)\right),\\ 
			\partial_t\rho^{\varepsilon,\delta}=&\mathcal{J_\varepsilon}\left(\beta\ \Jrho\left(1\hspace{-0,1cm}-\hspace{-0,1cm}\frac{\JA\Jrho}{K} \right)-\alpha(\Jrho)^2\right)\\
			&+\hspace{-0,1cm}\mathcal{J_\varepsilon}\left(\mu\alpha\Jrho(\Jrho\hspace{-0,1cm}-\hspace{-0,1cm}\langle \Jrho\rangle)\hspace{-0,1cm}+\hspace{-0,1cm}\partial_{x}(\Jrho \mathcal{J_\varepsilon}\partial_{x}\rho^{\varepsilon,\delta})\right),
            \end{split}
	        \end{equation}
        together with the initial and periodicity conditions
\begin{equation}
\begin{split}
			A^{\varepsilon,\delta}(0,x)&=\mathcal{J}_\varepsilon(\delta+A_0),\quad \rho^{\varepsilon,\delta}(0,x)=\mathcal{J}_\varepsilon(\delta+\rho_0),\\
			A^{\varepsilon,\delta}(t,-L)&=A^{\varepsilon,\delta}(t,L),\quad\rho^{\varepsilon,\delta}(t,-L)=\rho^{\varepsilon,\delta}(t,L).			 
		\end{split}
	\end{equation}
	
	    The two regularization parameters $\varepsilon$ and $\delta$ play complementary roles in our approach: $\varepsilon$ controls the mollification of the system, while $\delta$ enforces a uniform lower bound on $\Jrho$ throughout the evolution. This strict positivity is indispensable for deriving \emph{a priori} $H^m$–bounds on the approximate densities $\rho^{\varepsilon,\delta}$, since it permits us to harness the parabolic dissipation
\[
- \int_I \Jrho \,\bigl(\partial_x^{m+1}\Jrho\bigr)^2 \,\mathrm{d}x
\]
to absorb the nonlinear terms generated by the logistic interaction, for example,
\[
\int_I \partial_x^m \JA \,\bigl(\Jrho\bigr)^2\,\partial_x^m\Jrho \,\mathrm{d}x.
\]
Moreover, one shows that both $\rho^{\varepsilon,\delta}$ and $A^{\varepsilon,\delta}$ remain uniformly bounded away from zero—namely,
\[
\bigl(\rho^{\varepsilon,\delta}\bigr)^{-1},\ \bigl(A^{\varepsilon,\delta}\bigr)^{-1}\;\in\;L^\infty(I)
\]
independently of $\varepsilon$.
	In Lemma \ref{LemmaExistenceEpsilon} we establish the existence of a solution to Problem \eqref{RegularizedEpsDelta}. Then, in Lemmas \ref{LemmaExistenceDelta1} and \ref{LemmaExistenceDelta2}, we let $\varepsilon\to0$ to demonstrate that the corresponding sequence of solutions 
 to the problem
	\begin{equation}\label{RegularizedDelta}
			\begin{aligned}
			\partial_t A^\delta=&  A^\delta(\alpha\rho^\delta-\mu\alpha(\rho^\delta\hspace{-0,1cm}-\hspace{-0,1cm}\langle \rho^\delta \rangle))+\widetilde{\beta}A^\delta\left(1\hspace{-0,1cm}-\hspace{-0,1cm}\frac{\rho^\delta A^\delta}{\widetilde{K}} \right)+\partial_x(\rho^\delta \partial_x A^\delta), \\
			\partial_t\rho^\delta=&\beta\ \rho^\delta\left(1\hspace{-0,1cm}-\hspace{-0,1cm}\frac{A^\delta\rho^\delta}{K} \right)-\alpha(\rho^\delta)^2
			\hspace{-0,1cm}+\hspace{-0,1cm}\mu\alpha\rho^\delta(\rho^\delta-\langle \rho^\delta\rangle)+\partial_x(\rho^\delta \partial_{x}\rho^\delta),
            \end{aligned}
	\end{equation}
    together with boundary conditions
    \begin{equation}
\begin{split}
			A^\delta(0,x)&=\delta+A_0,\quad\rho^\delta(0,x)=\delta+\rho_0,\\ A^\delta(t,-L)&=A^\delta(t,L),\quad\rho^\delta(t,-L)=\rho^\delta(t,L) ,
		\end{split}
	\end{equation}
	is well defined.
	
To finish the proof of Theorem \ref{TheoremExistence}, we then pass to the limit $\delta \to 0$ in the intermediate, $\delta$\nobreakdash–regularized problem \eqref{RegularizedDelta}, thereby recovering a solution of the original system.

The proof of Lemma \ref{LemmaExistenceEpsilon} is a straightforward application of the Picard--Lindel\"of theorem in a Banach space setting (see \cite{Hartman2002}).

	\begin{lemma}\label{LemmaExistenceEpsilon}
	Let $m\ge3$ be an integer and let the initial data satisfy
$$
A_0\in H^{m-1}(I),\quad \rho_0\in H^m(I), 
\qquad A_0\ge0,\ \rho_0>0.
$$

Then for each pair of regularization parameters $\varepsilon>0$, $\delta>0$, there exists a time
$$
T_{\varepsilon,\delta}>0
$$
and a unique classical solution
$$
\bigl(A^{\varepsilon,\delta}(t,\cdot),\;\rho^{\varepsilon,\delta}(t,\cdot)\bigr)
\;\in\;
C^1\bigl([0,T_{\varepsilon,\delta}),\,H^{m-1}(I)\bigr)
\times
C^1\bigl([0,T_{\varepsilon,\delta}),\,H^m(I)\bigr)
$$
of the regularized problem \eqref{RegularizedEpsDelta},
with
$$
A^{\varepsilon,\delta}(t,x)>0,\quad \rho^{\varepsilon,\delta}(t,x)>0
\quad\text{for all }t\in[0,T_{\varepsilon,\delta}),\;x\in I.
$$        
	\end{lemma}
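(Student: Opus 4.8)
The plan is to apply the Picard--Lindel\"of (Cauchy--Lipschitz) theorem in a Banach space to the system \eqref{RegularizedEpsDelta}, viewed as an ODE for the pair $(\Aed,\rhoed)$ taking values in $X := H^{m-1}(I)\times H^m(I)$. The crucial observation is that, after mollification by $\J_\e$, every occurrence of $\Aed$ and $\rhoed$ inside a spatial derivative appears as $\J_\e\Aed$ or $\J_\e\rhoed$, which are $C^\infty$ and gain arbitrarily many derivatives. Concretely, I would write the right--hand sides of \eqref{RegularizedEpsDelta} as $\mathcal{F}_\e(\Aed,\rhoed)$ and $\mathcal{G}_\e(\Aed,\rhoed)$ and verify that $\mathcal{F}_\e:X\to H^{m-1}(I)$ and $\mathcal{G}_\e:X\to H^m(I)$ are well defined and locally Lipschitz. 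The key mechanism is that $\J_\e$ is a bounded smoothing operator: for each fixed $\e>0$ there is a constant $C_\e$ with $\|\J_\e f\|_{H^{k}}\le C_\e\|f\|_{H^{m}}$ for every $k$, so the spatial derivatives $\partial_x(\J_\e\rhoed\,\J_\e\partial_x\rhoed)$ and $\partial_x(\J_\e\rhoed\,\J_\e\partial_x\Aed)$, together with the outermost $\J_\e$, land back in the target space with room to spare; there is no derivative loss and the problem is genuinely an ODE in a Banach space rather than a PDE.

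The second ingredient is to control the nonlinearities and the nonlocal term. Each right--hand side is a polynomial in the mollified quantities $\J_\e\Aed$, $\J_\e\rhoed$ and their $\J_\e$--smoothed spatial derivatives, together with the convolution $\langle\J_\e\rhoed\rangle=\Gamma*\J_\e\rhoed$. I would use that $H^{m-1}(I)$ and $H^m(I)$ are Banach algebras (since $m\ge3$ gives embedding into $C^1$ on the one--dimensional domain $I$), that $\Gamma\in L^1(\R)$ so Young's inequality makes $f\mapsto\Gamma*f$ bounded on every $H^k$, and that products and compositions of locally Lipschitz maps are locally Lipschitz. Combining these facts, $\mathcal{F}_\e$ and $\mathcal{G}_\e$ are locally Lipschitz on bounded subsets of $X$, and Picard--Lindel\"of yields a unique maximal classical solution $(\Aed,\rhoed)\in C^1\bigl([0,T_{\e,\delta}),H^{m-1}(I)\bigr)\times C^1\bigl([0,T_{\e,\delta}),H^m(I)\bigr)$ on a short time interval.

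The main obstacle is establishing the strict positivity $\Aed>0$ and $\rhoed>0$, since the Banach--space ODE argument by itself only produces a solution, not sign information. The initial data $\J_\e(\delta+A_0)$ and $\J_\e(\delta+\rho_0)$ are strictly positive because $A_0,\rho_0\ge0$, $\delta>0$, and the heat kernel $\J_\e$ is strictly positivity--preserving. To propagate positivity I would pass to the pointwise (mild) formulation of each equation and observe that the right--hand sides contain the linear-in-solution factors $\Aed$ and $\rhoed$ multiplying bounded coefficients; since the solution is $C^1$ in $H^m\hookrightarrow C^1$ and hence jointly continuous in $(t,x)$, one can shrink $T_{\e,\delta}$ if necessary so that the coefficients stay bounded and a Gronwall/comparison argument keeps both components strictly positive on a possibly smaller interval. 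For $\rhoed$ one may alternatively rewrite its equation, using the positivity of $\J_\e\rhoed$, as a uniformly parabolic equation with smooth coefficients and invoke the strong maximum principle; for $\Aed$, whose equation is an ODE--transport hybrid in the mollified setting, the multiplicative structure $\partial_t\Aed = \Aed\cdot(\text{bounded}) + (\text{forcing})$ together with positivity of the forcing near $t=0$ gives the bound. I expect this positivity step, and in particular the careful choice of the time horizon on which positivity is guaranteed, to be the delicate part, whereas the local existence via Picard--Lindel\"of is routine once the smoothing properties of $\J_\e$ are recorded.
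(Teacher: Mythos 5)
Your existence argument is the same as the paper's: recast \eqref{RegularizedEpsDelta} as an ODE $\partial_t(\Aed,\rhoed)=F(\Aed,\rhoed)$ in the Banach space $H^{m-1}(I)\times H^m(I)$ and apply Picard--Lindel\"of, with the local Lipschitz property of $F$ coming from the fact that $\J_\e$ maps $L^2$ into $C^\infty$ with $\e$-dependent constants (so no derivative loss), from the Banach-algebra property of $H^k(I)$ for $k\ge 1$ in one dimension, and from Young's inequality for $\Gamma*\cdot$. The paper omits these verifications ``for the sake of brevity''; your account of them is correct and fills them in.

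The gap is in the positivity step, precisely the part you flag as delicate. Every right-hand side of \eqref{RegularizedEpsDelta} carries an \emph{outer} mollification $\J_\e(\cdots)$, and this destroys both structures you invoke. The $\Aed$-equation is \emph{not} of the form $\partial_t\Aed=\Aed\cdot(\text{bounded})+(\text{forcing})$: the solution enters only through $\J_\e\Aed$ inside a nonlocal convolution, so no pointwise multiplicative structure survives and the Gronwall/comparison argument has nothing to act on. Likewise the $\rhoed$-equation is not a uniformly parabolic PDE for $\rhoed$: its ``diffusion'' term is $\J_\e\partial_x(\J_\e\rhoed\,\J_\e\partial_x\rhoed)$, a composition of smoothing operators rather than an elliptic operator applied to $\rhoed$, so the strong maximum principle cannot be invoked as stated. (This is exactly why the paper's genuinely delicate positivity arguments --- the ones uniform in $\e$ and $\delta$, in Lemma \ref{LemmaExistenceDelta1} and Theorem \ref{TheoremExistence} --- proceed via the energy term $\|1/\rhoed\|_{L^\infty}$ and Rademacher's theorem, not via maximum principles.) Fortunately, none of that machinery is needed here, and you already hold the correct ingredient: the lemma only asserts positivity on a \emph{possibly smaller} interval. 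Since $\Aed(0,\cdot)=\J_\e(\delta+A_0)\ge\delta$ and $\rhoed(0,\cdot)=\J_\e(\delta+\rho_0)\ge\delta$ (the periodic heat kernel is positive with unit mass), and since $t\mapsto(\Aed(t,\cdot),\rhoed(t,\cdot))$ is continuous into $H^{m-1}(I)\times H^m(I)\hookrightarrow C(I)\times C(I)$, the minimum over $x$ of each component is continuous in $t$ and strictly positive at $t=0$, hence stays positive on a shorter interval. That one-line continuity observation is the paper's entire positivity proof; replace your comparison/maximum-principle discussion with it and your proposal is complete.
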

	\begin{proof}
		Given $\varepsilon,\delta>0$,  we start rewriting the Problem  \eqref{RegularizedEpsDelta} as
		\begin{equation*}
			\partial_t (A^{\varepsilon,\delta},\rho^{\varepsilon,\delta})=F(A^{\varepsilon,\delta},\rho^{\varepsilon,\delta}),
		\end{equation*}
		where $F:H^{m-1}(I)\times H^m(I)\to H^{m-1}(I)\times H^{m}(I)$ denotes the operator given by the right-hand side of the system.
		
		As noted above, the existence and uniqueness of a solution
$$
(A^{\varepsilon,\delta},\,\rho^{\varepsilon,\delta}) 
\in C^1\bigl([0,T_{\varepsilon,\delta}),H^{m-1}(I)\bigr)\times
C^1\bigl([0,T_{\varepsilon,\delta}),H^{m}(I)\bigr)
$$
is guaranteed on some time interval $[0,T_{\varepsilon,\delta})$ by the Picard--Lindelöf theorem.  Due to the presence of the convolutions with the heat kernel, the hypotheses of the Picard--Lindelöf theorem are easy to verify, for the sake of brevity, we left for the interested reader.

		Furthermore, since $(A^{\varepsilon,\delta},\rho^{\varepsilon,\delta})$ depends continuously on time and starts from strictly positive initial data, it stays strictly positive on a (possibly smaller) interval $[0,T_{\varepsilon,\delta})$.
	\end{proof}
	The next step is to establish the existence of a solution to Problem \ref{RegularizedDelta} by letting $\varepsilon\to0$ in the family $\bigl(A^{\varepsilon,\delta},\rho^{\varepsilon,\delta}\bigr)$, via Aubin--Lions compactness Lemma (\cite[Corrollary 4]{Simon1986})

	In order to apply this result let us show that the energy
	\[
        \begin{split}
            E(t)=& 1+\|\rho^{\varepsilon,\delta}\|_{L^2}^2+\|\partial_{x}^{m}\rho^{\varepsilon,\delta}\|_{L^2}^2+\left\| \frac{1}{\rho^{\varepsilon,\delta}}\right\|_{L^\infty} \\&+ \|A^{\varepsilon,\delta}\|_{L^2}^2+\|\partial_x^{m-1}A^{\varepsilon,\delta}\|_{L^2}^2+\left\| \frac{1}{A^{\varepsilon,\delta}}\right\|_{L^\infty}.
        \end{split}
        \]
	is uniformly bounded in some time interval $[0,T_\delta)$ where $T_\delta$ does not depend on $\varepsilon$. In particular, since on the bounded interval $I$ the Sobolev norm $\|f\|_{H^m}$ is equivalent to 
\[
\|f\|_{L^2} + \bigl\|\partial_x^m f\bigr\|_{L^2},
\]
it follows that, for each fixed $\delta>0$, the approximate solutions 
\[
\bigl(A^{\varepsilon,\delta},\,\rho^{\varepsilon,\delta}\bigr)_{\varepsilon>0}
\]
are uniformly bounded (independently of $\varepsilon$) in 
\[
C\bigl([0,T_\delta);H^{m-1}(I)\bigr)\,\times\,C\bigl([0,T_\delta);H^m(I)\bigr).
\] 
Furthermore, the  control of the terms $\left\| \frac{1}{A^{\varepsilon,\delta}}\right\|_{L^\infty}$ and $\left\| \frac{1}{\rho^{\varepsilon,\delta}}\right\|_{L^\infty}$, which appear into  our energy, guarantees that $A^{\varepsilon,\delta}(t,\cdot)$ and $\rho^{\varepsilon,\delta}(t,\cdot)$  are positive in $[0,T_\delta)$. 

In the next result we obtain the uniform bounds on the energy $E$ which allow us to apply Aubin--Lions lemma and ensure the existence of limit for the family $(A^{\varepsilon,\delta},\rho^{\varepsilon,\delta})$ as $\varepsilon$ tends to $0$.

\begin{lemma} \label{LemmaExistenceDelta1}
		Given any $\delta>0$ there exists a time $T_{\delta}>0$, a subsequence (still denoted $(A^{\varepsilon,\delta},\rho^{\varepsilon,\delta})$), and a pair of limit functions
$$
\bigl(A^\delta,\rho^\delta\bigr)\;\in\;C\bigl([0,T_\delta),H^{m-1-s}(I)\bigr)\;\times\;C\bigl([0,T_\delta),H^{m-s}(I)\bigr),
$$
such that, as $\varepsilon\to0$,
\[
\begin{split}
& A^{\varepsilon,\delta}\;\longrightarrow\;A^\delta
\quad\text{in }C\bigl([0,T_\delta),\,H^{m-1-s}(I)\bigr),
\\
&\rho^{\varepsilon,\delta}\;\longrightarrow\;\rho^\delta
\quad \ \, \text{in }C\bigl([0,T_\delta),\,H^{m-s}(I)\bigr).
\end{split}
\]
Moreover,  the limits $A^\delta(t,\cdot)$ and $\rho^\delta(t,\cdot)$ remain strictly positive for all $t\in[0,T_\delta)$.
	\end{lemma}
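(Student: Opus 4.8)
The plan is to promote the sought uniform bound on $E$ to a closed differential inequality $\frac{d}{dt}E(t)\le C\bigl(E(t)\bigr)$, with $C$ a continuous increasing function depending only on $\delta$ and on the norms of the data but \emph{not} on $\varepsilon$; comparison with the scalar ODE $y'=C(y)$, $y(0)=E(0)$, then furnishes a time $T_\delta>0$, independent of $\varepsilon$, on which $\sup_{[0,T_\delta)}E<\infty$. This uniform bound is exactly the input required by the Aubin--Lions--Simon lemma to produce the asserted strong convergence. Throughout I would use that $\J_\e$ is self-adjoint, commutes with $\partial_x$, is an $L^\infty$- and $L^2$-contraction, and preserves lower bounds (being convolution against a probability kernel), so that $\min_x\Jrho\ge\min_x\rhoed$.

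First I would record the two zeroth-order bounds by testing the equations in \eqref{RegularizedEpsDelta} against $\rhoed$ and $\Aed$: after moving one mollifier across each pairing, the diffusive contributions are nonpositive and every reaction term is dominated by $C(E)$ through $H^1(I)\hookrightarrow L^\infty(I)$. The crux is the top-order estimate. I would apply $\partial_x^{m}$ to the $\rho$-equation and $\partial_x^{m-1}$ to the $A$-equation, test against $\partial_x^{m}\rhoed$ and $\partial_x^{m-1}\Aed$, and integrate by parts, sliding a factor $\J_\e$ onto the test function. This isolates the two \emph{good} parabolic terms
\[
-\int_I \Jrho\,\bigl(\partial_x^{m+1}\Jrho\bigr)^2\,\D x\le 0,
\qquad
-\int_I \Jrho\,\bigl(\partial_x^{m}\JA\bigr)^2\,\D x\le 0,
\]
both nonpositive \emph{precisely} because $\Jrho$ is bounded below, which is where the term $\|1/\rhoed\|_{L^\infty}\subset E$ is used. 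These dissipations deliver $L^2$-control of $\partial_x^{m+1}\Jrho$ and of $\partial_x^{m}\JA$, i.e. one derivative beyond what $E$ itself stores.

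The remaining terms are products of strictly lower-order factors; each is estimated by Sobolev embedding and then, whenever a top derivative $\partial_x^{m+1}\Jrho$ or $\partial_x^{m}\JA$ resurfaces, split with Young's inequality $ab\le\eta a^2+\tfrac{1}{4\eta}b^2$ and absorbed into the two dissipations, leaving a remainder bounded by $C(E)$. The delicate members are the cross terms: the commutators generated by $\J_\e\partial_x(\Jrho\,\JAuno)$ in the $A$-estimate, and the logistic coupling in the $\rho$-estimate, whose worst contribution is $\int_I \partial_x^{m}\JA\,(\Jrho)^2\,\partial_x^{m}\Jrho\,\D x$. \textbf{This mutual absorption is the main obstacle.} It closes only because the diffusion weight in both equations is the \emph{same} function $\Jrho$: the dissipation furnished by the $A$-equation controls $\partial_x^{m}\JA$ even though $A$ only lives at regularity $H^{m-1}$, and this is exactly the factor needed to tame the logistic term in the $\rho$-estimate; symmetrically, the $\rho$-dissipation absorbs the cross terms appearing in the $A$-estimate. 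Balancing the two parameters $\eta$ so that all borrowed top-order mass fits under the available dissipation is the heart of the computation.

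It remains to propagate the pointwise lower bounds. Since for fixed $\varepsilon$ the solution is smooth in $x$, the running minimum $m_\rho(t):=\min_x\rhoed(t,x)$ is Lipschitz and, by the envelope theorem, $|m_\rho'(t)|\le\|\partial_t\rhoed(t,\cdot)\|_{L^\infty}\le\|G_\rho\|_{L^\infty}$, where $G_\rho$ is the unmollified right-hand side and the last step uses the $L^\infty$-contraction of $\J_\e$; every term of $G_\rho$ is bounded by $C(E)$ via $H^2(I)\hookrightarrow L^\infty(I)$. Hence $\frac{d}{dt}\|1/\rhoed\|_{L^\infty}\le\|1/\rhoed\|_{L^\infty}^2\,\|\partial_t\rhoed\|_{L^\infty}\le C(E)$, and the same scheme governs $\|1/\Aed\|_{L^\infty}$ (here the second-order diffusive contribution to $\partial_t\Aed$ must lie in $L^\infty$ uniformly in $\varepsilon$, which is where the regularity margin becomes relevant). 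Collecting the three families of estimates yields $\frac{d}{dt}E\le C(E)$ and hence the uniform bound on $[0,T_\delta)$. Finally, reading off the equations bounds $(\partial_t\Aed,\partial_t\rhoed)$ in $L^\infty\bigl([0,T_\delta);H^{m-3}\times H^{m-2}\bigr)$; combined with the uniform bound in $L^\infty\bigl([0,T_\delta);H^{m-1}\times H^m\bigr)$ and the embeddings $H^m\hookrightarrow\hookrightarrow H^{m-s}\hookrightarrow H^{m-2}$ (and likewise for $A$), the Aubin--Lions--Simon lemma \cite[Corollary 4]{Simon1986} extracts a subsequence converging strongly in $C\bigl([0,T_\delta);H^{m-1-s}\bigr)\times C\bigl([0,T_\delta);H^{m-s}\bigr)$ to $(A^\delta,\rho^\delta)$. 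Uniform convergence transfers the lower bounds $\|1/\rhoed\|_{L^\infty},\|1/\Aed\|_{L^\infty}\le C$ to the limit, so $A^\delta,\rho^\delta$ stay strictly positive on $[0,T_\delta)$.
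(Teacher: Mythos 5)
Your proposal is correct and shares the paper's overall skeleton: the same energy $E$ (including the reciprocal norms $\|1/\rhoed\|_{L^\infty}$, $\|1/\Aed\|_{L^\infty}$), a uniform-in-$\varepsilon$ differential inequality closed by ODE comparison to produce an $\varepsilon$-independent $T_\delta$, an extremum-tracking (Rademacher/envelope) argument for the reciprocal norms, Aubin--Lions--Simon with a diagonal argument, and transfer of the lower bounds to the limit. Where you genuinely depart from the paper is in the one step both you and the authors identify as the crux: the logistic term $\int_I \partial_x^m\JA\,(\Jrho)^2\,\partial_x^m\Jrho\,\D x$, whose factor $\partial_x^m \JA$ exceeds the regularity stored for $A$. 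The paper integrates by parts to trade $\partial_x^m\JA$ for $\partial_x^{m-1}\JA$ (which is controlled by $E$) at the price of creating a term with $\partial_x^{m+1}\Jrho$, and absorbs the latter by Young into the $\rho$-dissipation $-\int_I \Jrho\,(\partial_x^{m+1}\Jrho)^2\,\D x$; in this way the top-order $\rho$-estimate closes using only its own dissipation, independently of the $A$-estimate. You instead keep $\partial_x^m\JA$ and absorb it by Young into the dissipation $-\int_I \Jrho\,(\partial_x^{m}\JA)^2\,\D x$ produced by the $A$-equation. This ``mutual absorption'' is valid--both dissipations carry the same weight $\Jrho$, and the Young parameters can be balanced because the borrowed terms come with fixed constants such as $\beta/K$--and it is arguably more symmetric, making explicit the structural role of the shared diffusion coefficient; its cost is that the two top-order estimates must be closed jointly rather than separately. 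Two minor caveats: the nonpositivity of the dissipation integrals requires only $\Jrho\ge 0$ (the quantitative bound $\|1/\rhoed\|_{L^\infty}\le E$ is what keeps the weighted dissipation coercive, controls the commutator remainders you divide by $\Jrho$, and propagates positivity); and your bound on $\tfrac{d}{dt}\|1/\Aed\|_{L^\infty}$ needs $\|\partial_t \Aed\|_{L^\infty}\le C(E)$ uniformly in $\varepsilon$, which involves $\Jrho\,\partial_x^2\JA$ and for $m=3$ (where $A$ lies only in $H^2$) does not follow from Sobolev embedding--you flag this as the ``regularity margin,'' and the paper elides exactly the same point by declaring the $A$-estimates ``analogous,'' so this is a shared delicacy rather than a defect of your route.
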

		\begin{proof}
        The first step is to show that there is a time $T_\delta>0$--independent of $\varepsilon$--such that the family $\bigl(A^{\varepsilon,\delta},\rho^{\varepsilon,\delta}\bigr)$ remains strictly positive and uniformly bounded in
$$
C\bigl([0,T_\delta),H^{m-1}(I)\bigr)\times C\bigl([0,T_\delta),H^m(I)\bigr).
$$
To that end, we derive \emph{a priori}  estimates that are uniform in $\varepsilon$ for the following energy function
        \[
        \begin{split}
            E(t)=& 1+\|\rho^{\varepsilon,\delta}\|_{L^2}^2+\|\partial_{x}^{m}\rho^{\varepsilon,\delta}\|_{L^2}^2+\left\| \frac{1}{\rho^{\varepsilon,\delta}}\right\|_{L^\infty} \\&+ \|A^{\varepsilon,\delta}\|_{L^2}^2+\|\partial_x^{m-1}A^{\varepsilon,\delta}\|_{L^2}^2+\left\| \frac{1}{A^{\varepsilon,\delta}}\right\|_{L^\infty}.
        \end{split}
        \]
        These bounds will guarantee both positivity and the required Sobolev--norm control up to time $T_\delta$.

        For the sake of clarity and
        focus we present here the proof for the case $m=3$ and  in each step we will comment how it can be adapted to the case $m\geq 4$.

       To simplify the notation and make the proof more readable, we introduce
        \begin{equation*}
            \begin{split}
                \Jrho=P\\
                \JA=\Lambda.
            \end{split}
        \end{equation*}
        We also observe that
        \begin{equation*}
            \begin{split}
                 \|\Lambda\|_{H^m}^2+\|P\|_{H^m}^2\leq E(t).
            \end{split}
        \end{equation*}

        In order to estimate  $\|\rho^{\varepsilon,\delta}\|_{L^2}^2$, we multiply the  equation of $\rho^{\varepsilon,\delta}$ by $\rho^{\varepsilon,\delta}$ and integrate by parts. Concretely
        \[
            \begin{split}
                \frac{1}{2}\frac{d}{dt}\int (\rho^{\varepsilon,\delta})^2\D x=&\beta\int  P^2\D x-\frac{\beta}{K}\int P^2 \Lambda \D x
                +\alpha(\mu-1)\int P^3\D x\\&-\mu\alpha\int P^2 \langle P \rangle\D x-\int P(\partial_x P)^2\D x.
            \end{split}
        \]
        Now, the positivity of  $P$ and $\Lambda$ along with the fact that $\mu<1$, ensure
        \[
            \begin{split}
                \frac{1}{2}\frac{d}{dt}\|\rhoed\|_{L^2}^2\leq&\beta\| P\|_{L^2}^2.
            \end{split}
        \]
        This implies
        \begin{equation*}
            \frac{d}{dt}\|\rho^{\varepsilon,\delta}\|_{L^2}^2\leq  \beta E(t).
        \end{equation*}

        Applying the identical strategy to the $L_2$ norm of third spatial derivative,
 yields a corresponding differential inequality and hence a uniform (in $\varepsilon$) bound on $\|\rho^{\varepsilon,\delta}(t)\|_{H^3}$:

            \begin{align*}
                \frac{1}{2}\frac{d}{dt}&\int_I(\partial_{x}^3\rho^{\varepsilon,\delta})^2\D x= \beta \int_I (\partial_x^3 P)^2dx -\frac{4\beta}{K} \int_I \partial_x^2 \Lambda P \partial_x P \partial_x^3 P \D x \\
                &-\frac{6\beta}{K}\int_I \Big( \partial_x \Lambda  (\partial_x P)^2 \partial_x^3 P   + \partial_x \Lambda   P \partial_x^2 P \partial_x^3 P \Big)\D x   \\
                &-\frac{\beta}{K} \int_I \Big( 6(\Lambda \partial_x P \partial_x^2 P \partial_x^3 P  +  \Lambda P (\partial_x^3 P)^2)    + \partial_x^2 \Lambda  P^2 \partial_x^4 P \Big)\D x  \\
                &+ 2\alpha(\mu-1)\int_I \Big(   P \partial_x^3 P  + 3 \partial_x P\partial_x^2 P \Big)\partial_x^3 P dx\\
                &- \mu \alpha \int_I \Big(    \partial_x^3 P\langle P \rangle  + 3\partial_x^2 P\langle  \partial_{x} P \rangle \Big)\partial_x^3 P \D x\\
                &- \mu \alpha \int_I  \Big(
                3\partial_x P\langle  \partial_{x}^2 P \rangle +
                P\langle  \partial_{x}^3 P \rangle\Big)\partial_x^3 P \D x\\
                &+8\int_I (\partial_x^3 P)^2\partial_x^2 P-\int_IP(\partial_x^4 P)^2 \D x.
            \end{align*}
        Observe that in the last identity integration by parts has been used twice. The first one applied to the term $\int_I \partial_x^3 P \partial_x^3 A P $ in order to avoid resorting to extra regularity on $A$ and the second one applied to the term coming from diffusion. More precisely, we have used
        \begin{equation*}
            \begin{split}
                \int_I \partial_x^3 P \partial_x^3 \Lambda P^2 \D x
                =-2\int_I \partial_x^4 P \partial_x^2 \Lambda P^2 \D x -\int_I \partial_x^3 P \partial_x^2 \Lambda P \partial_x P \D x,
            \end{split}
        \end{equation*}
        and
        \begin{equation*}
        \begin{split}
            \int_I \partial_x^3 P \partial_x^4(P\partial_x P) \D x= 8\int_I (\partial_x^3 P)^2 \partial_x^2 P \D x -\int_I P (\partial_x^4 P)^2\D x.
            \end{split}
        \end{equation*}

        By virtue of  the Sobolev embedding $H^m(I)\hookrightarrow L^\infty(I)$ for $m\ge1$, and a straightforward application of Hölder’s inequality, we obtain
        \begin{equation*}
            \begin{split}
                \frac{1}{2}\frac{d}{dt}\int_I &(\partial_{x}^3\rho^{\varepsilon,\delta})^2 \D x\leq \beta \|P\|_{H^3}^2+ 2(\alpha((1-\mu)+4\mu\|\Gamma\|_{L^1} )+4 )\|P\|_{H^3}^3\\
                &+\frac{12\beta}{K}\|\Lambda\|_{H^2}\|P\|_{H^3}^3-\frac{\beta}{K} \int_I \partial_x^2 \Lambda  P^2 \partial_x^4 P \D x-\int_I P( \partial_x^4 P)^2\D x.
            \end{split}
        \end{equation*}
        We now exploit the strict positivity of $\rho^{\varepsilon,\delta}(t,\cdot)$ on $[0,T_{\varepsilon,\delta})$ to absorb the troublesome fourth‐derivative term.  In particular, since
$$
\rho^{\varepsilon,\delta}(t,x)\ge c>0,
$$
we can dominate
$$
\int_I P\,\bigl(\partial_x^4 P\bigr)^2\,dx
$$
against itself with a negative sign, thereby controlling any  fourth‐order contributions. In particular, by Young inequality we deduce
        \begin{equation*}
            \begin{split}
                \frac{\beta}{K} \int_I \partial_x^2 \Lambda   P^2 \partial_x^4 P \D x -\int_I P(\partial_x^4 P)^2 \D x\leq \frac{1}{4}\left( \frac{\beta}{K}\right)^2 \int_I (\partial_{x}^2\Lambda)^{2} P^3 \D x.
            \end{split}
        \end{equation*}
        Therefore,
        \begin{equation*}
            \begin{split}
                \frac{1}{2}\frac{d}{dt}\int_I(\partial_{x}^3\rho^{\varepsilon,\delta})^2\D x \leq& \beta \|P\|_{H^3}^2+ 2(\alpha((1-\mu)+4\mu\|\Gamma\|_{L^1} )+4 )\|P\|_{H^3}^3\\
                &+ \|P\|_{H^3}^3\left( \frac{12\beta}{K}\|\Lambda\|_{H^2}
                +\frac{1}{4}\left( \frac{\beta}{K}\right)^2 \|\Lambda\|_{H^2}^2\right).
            \end{split}
        \end{equation*}
        By definition of $E(t)$, we deduce
        \begin{equation*}
            \frac{d}{dt}\|\partial_{x}^3\rho^{\varepsilon,\delta}\|_{L^2}^2\leq  CE^3(t) .
        \end{equation*}
       Hereafter, $C$ denotes a generic constant—whose value may change from line to line—but which always depends only on the fixed parameters $\alpha,\beta,\mu,\Gamma$ and $K$, 
 and is uniform in $\varepsilon$ and $\delta$.

        In the general case, it is also possible to deal with all terms except one  thanks to Hölder estimates, as we do previously. The one which requires  more attention is again  coming from the logistic interaction, that is,
        \begin{equation*}
            \int_I\partial_x^m\Lambda P^2\partial_x^mP \D x.
        \end{equation*}
        The idea again is to integrate by part to write
        \begin{equation*}
            \begin{split}
            \int _I\partial_x^m\Lambda P^2\partial_x^mP \D x =\int_I \partial_x^{m-1}\Lambda P\partial_x P\partial_x^mP \D x +\int_I \partial_x^{m-1}\Lambda P^2\partial_x^{m+1}P \D x.
            \end{split}
        \end{equation*}
        For the first term on the right‐hand side we simply invoke Hölder’s inequality.  The second term is then absorbed using Young’s inequality into the negative diffusion integral
$$
-\,\int_I P\,\bigl(\partial_x^{m+1}P\bigr)^2\,dx,
$$
which itself appears after integrating by parts the mixed derivative term
$$
\int_I \partial_x^{m}P\,\partial_x^{m+1}\bigl(P\,\partial_x P\bigr)\,dx.
$$

            Let us estimate $\|\frac{1}{\rho^{\varepsilon,\delta}}\|_{L^\infty}$. We use the definition of energy to directly bound the right-hand side of the equation for $\rho^{\varepsilon,\delta}$  and obtain
        \begin{equation*}
            \partial_t \frac{1}{\rhoed}=-\frac{\partial_t \rhoed}{(\rhoed)^2} \leq CE(t)^4 .
        \end{equation*}

                Given $h\in \mathbb{R}^+$, we integrate between $t$ and $t+h$ and divide by $h$ in both sides of the latter equation to get
        \begin{equation*}
            \frac{1}{h}\left( \frac{1}{\rho^{\varepsilon,\delta}}(t+h,x)-\frac{1}{\rho^{\varepsilon,\delta}}(t,x)\right) \leq \frac{C}{h}\int_{t}^{t+h}E(s)^4 ds.
        \end{equation*}
        Equivalently,
        \begin{equation*}
            \frac{1}{h} \frac{1}{\rho^{\varepsilon,\delta}}(t+h,x) \leq \frac{C}{h}\int_{t}^{t+h}E(s)^4 ds+\frac{1}{h}\frac{1}{\rho^{\varepsilon,\delta}}(t,x).
        \end{equation*}
        Taking the supremum in $x$ yields
$$
\frac{1}{h}\,\biggl\|\tfrac{1}{\rho^{\varepsilon,\delta}}(t+h,\cdot)\biggr\|_{L^\infty}
\;\le\;
\frac{C}{h}\int_{t}^{t+h}E(s)^{4}\,ds
\;+\;
\frac{1}{h}\,\biggl\|\tfrac{1}{\rho^{\varepsilon,\delta}}(t,\cdot)\biggr\|_{L^\infty}.
$$
Rearranging gives the incremental quotient estimate
$$
\frac{1}{h}\Bigl(
\bigl\|\tfrac{1}{\rho^{\varepsilon,\delta}}(t+h,\cdot)\bigr\|_{L^\infty}
\;-\;
\bigl\|\tfrac{1}{\rho^{\varepsilon,\delta}}(t,\cdot)\bigr\|_{L^\infty}
\Bigr)
\;\le\;
\frac{C}{h}\int_{t}^{t+h}E(s)^{4}\,ds.
$$

        Therefore, if $\frac{d}{dt} \left\|\frac{1}{\rhoed}\right\|_{L^\infty}$ exists, we can let $h$ to $0$ in this  inequality  and deduce
        \begin{equation*}
            \frac{d}{dt} \left\|\frac{1}{\rhoed}\right\|_{L^\infty} \leq CE^4(t).
        \end{equation*}

        The existence of $\frac{d}{dt} \left\|\frac{1}{\rhoed}\right\|_{L^\infty}$ is ensured almost everywhere by using Rademacher's theorem because the function $t\to \left\|\frac{1}{\rhoed}\right\|_{L^\infty}$ is Lipschitz.

        Indeed, given $t_1,t_2\in [0,T_{\varepsilon,\delta})$,
        \begin{equation*}
            \begin{split}
                \sup_{x}\frac{1}{\rhoed}(t_1,x) &=\sup_{x}\left\{\frac{1}{\rhoed}(t_1,x)-\frac{1}{\rhoed}(t_2,x)+\frac{1}{\rhoed}(t_2,x)\right\}\\
                &\leq \sup_{x}\left\{\frac{1}{\rhoed}(t_1,x)-\frac{1}{\rhoed}(t_2,x)\right\}+\sup_{x}\frac{1}{\rhoed}(t_2,x)\\
                &\leq
                \left|\sup_{x}\left\{\frac{1}{\rhoed}(t_1,x)-\frac{1}{\rhoed}(t_2,x)\right\}\right|+\sup_{x}\frac{1}{\rhoed}(t_2,x)\\
                &\leq
                \sup_{x}\left|\frac{1}{\rhoed}(t_1,x)-\frac{1}{\rhoed}(t_2,x)\right|+\sup_{x}\frac{1}{\rhoed}(t_2,x).
            \end{split}
        \end{equation*}
        If we repeat the same estimates after interchanging $t_1$ and $t_2$, we obtain:
        \begin{equation*}
            \left|\sup_{x}\frac{1}{\rhoed}(t_1,x)-\sup_{x}\frac{1}{\rhoed}(t_2,x)\right|\leq \sup_{x}\left|\frac{1}{\rhoed}(t_1,x)-\frac{1}{\rhoed}(t_2,x)\right|.
        \end{equation*}
        Since $\rho^{\varepsilon,\delta} \in C^1([0,T_{\varepsilon,\delta}),H^m(I))$ and remains strictly positive throughout this interval, we can invoke the mean value theorem on the right-hand side of the previous inequality. This yields the existence of some $s \in (t_1, t_2)$ such that
        \begin{equation*}
            \left|\sup_{x}\frac{1}{\rhoed}(t_1,x)-\sup_{x}\frac{1}{\rhoed}(t_2,x)\right|\leq \sup_{x}\left|\partial_t\frac{1}{\rhoed}(s,x)\right||t_1-t_2|.
        \end{equation*}
        The regularity of $\rho^{\varepsilon,\delta}$ and its sign again implies that $\partial_t\rho^{\varepsilon,\delta}(s,x)$ is uniformly bounded in $s$ and $x$. Hence,
        \begin{equation*}
            \left|\sup_{x}\frac{1}{\rhoed}(t_1,x)-\sup_{x}\frac{1}{\rhoed}(t_2,x)\right|\leq L|t_1-t_2|,
        \end{equation*}
        for some $L>0$.

The terms of $E(t)$ involving $A^{\varepsilon,\delta}$ are obtained using analogous methods to those employed for $\rho^{\varepsilon,\delta}$, including integration by parts, properties of mollifiers, and standard Sobolev embedding and interpolation inequalities. Putting everything together we have
    \begin{equation*}
        \frac{d}{dt}E(t)\leq CE^4(t),
    \end{equation*}
    which implies
    \begin{equation}\label{EnergyEstimate}
        E(t)\leq \frac{1}{\left( E(0)^{-3}-3Ct\right)^{1/3}}.
    \end{equation}

    In addition, the energy $E$ verifies the following inequality at $t=0$
    \begin{equation*}
        E(0)\leq \|\partial_x^3 \rho_0\|_{L^2}^2+\|\partial_x^2A_0\|_{L^2}^2 +\|\rho_0\|^2_{L^2}+\|A_0\|^2_{L^2}+\frac{2}{\delta},
    \end{equation*}
    and, since the right-hand side of this  inequality does not depend on $\varepsilon$, by a classical continuation of solutions argument, there exists $T_\delta>0$, independent of $\varepsilon$, such that $A^{\varepsilon,\delta}(t,\cdot)$ and $\rho^{\varepsilon,\delta}(t,\cdot)$ are positive in $[0,T_\delta)$ and
    \begin{equation*}\label{UniformlyBounded}
        \begin{split}
        &(A^{\varepsilon,\delta},\rho^{\varepsilon,\delta}) \text{ is uniformly bounded in } \varepsilon\\ &\text{ in the space } C([0,T_\delta),H^2(I))\times C([0,T_\delta),H^3(I))\,.
        \end{split}
    \end{equation*}
    Thanks to this property, after taking $L^2$-norm in the right-hand side of system \eqref{RegularizedEpsDelta} we also have
    \begin{equation*}
        \begin{split}
        &(\partial_t A^{\varepsilon,\delta},\partial_t\rho^{\varepsilon,\delta}) \text{ is uniformly bounded in } \varepsilon \\&\text{ in the space } C([0,T_\delta),L^2(I))\times C([0,T_\delta),L^2(I))\,.
        \end{split}
    \end{equation*}
    Given a  bounded interval $I$ and $s\in(0,1)$, the inclusions $H^2(I)\times H^3(I)\subset H^{2-s}(I)\times H^{3-s}(I)$ and $H^{2-s}(I)\times H^{3-s}(I)\subset\hspace{-0.02cm} L^2(I)\times L^2(I)$ are compact and continuous, respectively. Hence, a direct application of Aubin--Lions lemma along with a diagonal argument ensures that there exists a subsequence also denoted  $(A^{\varepsilon,\delta},\rho^{\varepsilon,\delta})$ such that
    \begin{equation}\label{Convergence}
        (A^{\varepsilon,\delta},\rho^{\varepsilon,\delta})\overset{\varepsilon\rightarrow 0}{\longrightarrow}(A^\delta,\rho^\delta)
        \end{equation}
         in  $C([0,T_\delta),H^{2-s}(I))\hspace{-0.03cm}\times C([0,T_\delta),H^{3-s}(I))$,
    for some pair of functions $(A^\delta,\rho^\delta)\in C([0,T_\delta),H^{2-s}(I))\hspace{-0.03cm}\times\hspace{-0.03cm} C([0,T_\delta),H^{3-s}(I))$. 	

Finally, the limiting functions inherit the uniform regularity in $\varepsilon$, and we deduce
    \begin{equation*}
	 (A^\delta,\rho^\delta)\in L^\infty([0,T_\delta),H^{2}(I))\times L^\infty([0,T_\delta),H^{3}(I)).
\end{equation*}
\end{proof}

We now verify that the limit pair $(A^\delta,\rho^\delta)$ obtained in Lemma~\ref{LemmaExistenceDelta1} indeed satisfies the regularized system \eqref{RegularizedDelta}. 

\begin{lemma}\label{LemmaExistenceDelta2}
	The sequence $(A^\delta,\rho^\delta)$ from Lemma \ref{LemmaExistenceDelta1} is a strictly positive solution to Problem \eqref{RegularizedDelta}.
\end{lemma}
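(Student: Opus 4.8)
The plan is to pass to the limit $\e\to0$ in the time-integrated form of \eqref{RegularizedEpsDelta}, exploiting the strong convergences of Lemma~\ref{LemmaExistenceDelta1} together with the two elementary facts that the periodic heat semigroup $\J_\e$ is a contraction on every $H^k(I)$ and satisfies $\J_\e g\to g$ in $H^k(I)$ for each fixed $g\in H^k(I)$. For fixed $\e>0$ the classical solution of Lemma~\ref{LemmaExistenceEpsilon}, which by the uniform energy bound \eqref{EnergyEstimate} persists on the whole of $[0,T_\delta)$, satisfies
\begin{equation*}
\Aed(t)=\J_\e(\delta+A_0)+\int_0^t \mathcal{R}_A^{\,\e}(\tau)\,d\tau,\qquad \rhoed(t)=\J_\e(\delta+\rho_0)+\int_0^t \mathcal{R}_\rho^{\,\e}(\tau)\,d\tau,
\end{equation*}
where $\mathcal{R}_A^{\,\e}$ and $\mathcal{R}_\rho^{\,\e}$ denote the right-hand sides of \eqref{RegularizedEpsDelta}. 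I would first record the splitting
\begin{equation*}
\J_\e\rhoed-\rho^\delta=\J_\e(\rhoed-\rho^\delta)+(\J_\e\rho^\delta-\rho^\delta),
\end{equation*}
whose two summands vanish as $\e\to0$ (the first by contractivity and $\rhoed\to\rho^\delta$, the second by $\J_\e\rho^\delta\to\rho^\delta$), so that $\J_\e\rhoed\to\rho^\delta$ in $C([0,T_\delta);H^{m-s})$ and, identically, $\J_\e\Aed\to A^\delta$ in $C([0,T_\delta);H^{m-1-s})$; the same splitting applied to the initial data gives $\J_\e(\delta+A_0)\to\delta+A_0$ and $\J_\e(\delta+\rho_0)\to\delta+\rho_0$.

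Next I would treat the lower-order nonlinearities. Because $m-1-s>\tfrac12$ for $m\ge3$ and $0<s\ll1$, the space $H^{m-1-s}(I)$ is a Banach algebra, so the products $(\J_\e\rhoed)^2$, $\J_\e\Aed\,\J_\e\rhoed$, $(\J_\e\rhoed)^2\,\J_\e\Aed$ and $\J_\e\rhoed\,\langle\J_\e\rhoed\rangle$ converge to the corresponding products of $\rho^\delta$ and $A^\delta$ in $C([0,T_\delta);H^{m-1-s})$; for the nonlocal factor one uses in addition that convolution with the fixed kernel $\Gamma\in L^1(\R)$ is continuous on $L^2(I)$, whence $\langle\J_\e\rhoed\rangle\to\langle\rho^\delta\rangle$.

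The genuinely delicate contributions are the diffusion and cross-diffusion terms $\partial_x(\J_\e\rhoed\,\J_\e\partial_x\rhoed)$ and $\partial_x(\J_\e\rhoed\,\J_\e\partial_x\Aed)$, which carry one derivative and therefore force the convergence into a negative-index Sobolev space; this is the step I expect to demand the most care, since one must track precisely how much regularity survives each product and derivative. Using that $\J_\e$ commutes with $\partial_x$, one has $\J_\e\partial_x\rhoed=\partial_x(\J_\e\rhoed)\to\partial_x\rho^\delta$ in $H^{m-1-s}$ and $\J_\e\partial_x\Aed\to\partial_x A^\delta$ in $H^{m-2-s}$; invoking the algebra property once more gives $\J_\e\rhoed\,\J_\e\partial_x\rhoed\to\rho^\delta\partial_x\rho^\delta$ in $H^{m-1-s}$ and $\J_\e\rhoed\,\J_\e\partial_x\Aed\to\rho^\delta\partial_x A^\delta$ in $H^{m-2-s}$, and applying $\partial_x$ yields convergence in $H^{m-2-s}$ and $H^{m-3-s}$, respectively. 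The outermost $\J_\e$ in \eqref{RegularizedEpsDelta} is then removed by the same splitting as above, using that $\J_\e$ is uniformly bounded on these (negative-order) spaces.

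Collecting these convergences and applying dominated convergence to the time integrals—legitimate since the integrands are uniformly bounded in $\e$ on $[0,T_\delta)$ by \eqref{EnergyEstimate}—I pass to the limit in the two integrated identities and conclude that $(A^\delta,\rho^\delta)$ solves the integrated form of \eqref{RegularizedDelta}. The regularity $(A^\delta,\rho^\delta)\in L^\infty([0,T_\delta);H^{m-1})\times L^\infty([0,T_\delta);H^m)$ of the limit then permits differentiation in $t$, recovering the differential system \eqref{RegularizedDelta}. Finally, strict positivity is inherited at no extra cost: the uniform bounds $\|1/\rhoed\|_{L^\infty},\,\|1/\Aed\|_{L^\infty}\le E(t)\le C$ on $[0,T_\delta)$ force $\rhoed,\Aed\ge c>0$ uniformly in $\e$, and since $H^{m-s}(I)\hookrightarrow C(I)$ makes the convergence uniform in $x$, the limits satisfy $\rho^\delta,A^\delta\ge c>0$ throughout $[0,T_\delta)\times I$.
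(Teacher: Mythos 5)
Your proposal is correct, but it takes a genuinely different route from the paper. The paper proves the lemma by verifying a \emph{weak} formulation: each equation of \eqref{RegularizedEpsDelta} is tested against $\varphi\in C^\infty_c([0,T)\times I)$ and integrated by parts, so that the diffusion and cross-diffusion fluxes appear as $-\iint \J_\e(\Jrho\,\Jrhouno)\,\partial_x\varphi$ and $-\iint \J_\e(\Jrho\,\JAuno)\,\partial_x\varphi$; every term then involves at most one derivative of the approximants, and the limit $\e\to0$ is taken term by term using only H\"older's inequality, the mollifier error bound $\|\J_\e f-f\|_{H^{k-1}}\le \e C\|f\|_{H^k}$, and the convergence of Lemma \ref{LemmaExistenceDelta1} (the paper writes out the representative term $\J_\e(\JA(\Jrho)^2)$ and leaves the remaining terms to the reader). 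You instead pass to the limit in the time-integrated \emph{strong} form, keeping $\partial_x$ on the nonlinear fluxes and tracking convergence down to $H^{m-2-s}$ and $H^{m-3-s}$ (negative order when $m=3$), and then differentiate the integral identity to recover \eqref{RegularizedDelta}. Your route buys the equation in a strong, Sobolev-valued sense with explicit temporal regularity and dispenses with test functions, but it requires fractional/negative-order product estimates and the commutation $\J_\e\partial_x=\partial_x\J_\e$; the paper's route stays entirely within elementary $L^2$--$L^\infty$ estimates at the price of exhibiting only the weak formulation. Two details worth tightening in your write-up: first, to get $\J_\e\rho^\delta\to\rho^\delta$ in $C([0,T_\delta);H^{m-s})$ you need uniformity in $t$, which pointwise-in-time strong convergence of the semigroup does not give by itself; it follows, for instance, from $\|\J_\e\rho^\delta(t)-\rho^\delta(t)\|_{H^{m-1}}\le \e C\,\|\rho^\delta\|_{L^\infty([0,T_\delta);H^m)}$ combined with interpolation against the uniform $H^m$ bound. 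Second, the strict positivity of the limit pair, which you derive from the uniform bounds on $\|1/\rhoed\|_{L^\infty}$ and $\|1/\Aed\|_{L^\infty}$ together with uniform convergence, is already part of the statement of Lemma \ref{LemmaExistenceDelta1} in the paper, so that closing step, while correct, duplicates an available fact rather than adding a needed argument.
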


\begin{proof}
     	In order to show that $(A^\delta,\rho^\delta)$ is a weak solution, we must check that it satisfies the following identities
\begin{equation}\label{IntDelta}
	\begin{split}
		-\int_{0}^T\int_{I}&A^\delta\partial_t \varphi \D x \D t = \int_{I}\Aed (0,x)\varphi (0,x)\D x \\ &+\int_{0}^T\int_{I} A^\delta\left(\alpha(1-\mu)\rho^\delta+\mu\alpha \langle \rho^\delta \rangle \right) \varphi \D x \D t  \\
		& +\int_{0}^T\int_{I} \left(\widetilde{\beta}A^\delta \left(1-\frac{\rho^\delta A^\delta}{\widetilde{K}}\right) \varphi -\rho^\delta\partial_{x}A^\delta\partial_{x}\varphi \right) \D x \D t,\\
		\end{split}
       \end{equation}
       and
       \begin{equation}\label{IntDelta1}
       \begin{split}
		&-\int_{0}^T\int_{I}\rho^\delta\partial_t\varphi \D x  \D t = \int_{I}\rhoed (0,x) \varphi (0,x) \D x
        \\&\hspace{0.5cm}+\int_{0}^T\int_{I}\beta \rho^\delta  \left(1-\frac{A^\delta\rho^\delta}{K} \right)\varphi \D x \D t \\
		&\hspace{0.5cm}-\int_{0}^T\int_{I}\left(\alpha(1-\mu)+\mu\alpha \langle \rho^\delta\rangle\varphi +\partial_{x}\rho^\delta\partial_{x}\varphi \right)\rho^\delta \D x \D t,\\
        \end{split}
        \end{equation}
for every $\varphi\in C^\infty_c([0,T)\times I)$.

The idea then is to  multiply each equation of \eqref{RegularizedEpsDelta} by $\varphi\in C^\infty_c([0,T)\times I)$ , integrate by parts and  show that the equalities \eqref{IntDelta}--\eqref{IntDelta1} are obtained when we send $\varepsilon$ to $0$.

More precisely, the rest of the proof is devoted to see that each term of the equalities
\begin{equation*}
	\begin{split}
		-\int_{0}^T&\int_{I}\Aed \partial_t \varphi \D x \D t = \int_{I}\Aed (0,x)\varphi (0,x)\D x\\
        &+ \int_{0}^T\int_{I} \mathcal{J_\varepsilon}\left( \JA\left(\alpha(1-\mu)\Jrho+\mu\alpha \langle \Jrho \rangle \right)\right)  \varphi \D x \D t   \\
		&\hspace{0.5cm}+\widetilde{\beta}\int_{0}^T\int_{I} \mathcal{J_\varepsilon}\left( \JA\left(1-\frac{\Jrho \JA}{\widetilde{K}}\right)\right) \varphi \D x \D t \\
		&\hspace{0.5cm}-\int_{0}^T\int_{I} \mathcal{J_\varepsilon}\left( \Jrho\JAuno\right)\partial_{x}\varphi \D x \D t,\\
		 \end{split}
\end{equation*}
and
\begin{equation*}
        \begin{split}
-\int_{0}^T&\int_{I}\rhoed \partial_t \varphi \D x \D t	= \int_{I}\rhoed (0,x) \varphi (0,x) \D x \\
&+\beta\int_{0}^T\int_{I}\mathcal{J_\varepsilon}\left( \Jrho  \left(1-\frac{\JA\Jrho}{K} \right)\right) \varphi \D x \D t \\	&\hspace{0.3cm}+\alpha\int_{0}^T\int_{I}\mathcal{J_\varepsilon}\left( \Jrho\Bigg( \hspace{-0.1cm}(\mu-1)\Jrho-\mu\langle \Jrho\rangle\Bigg)\hspace{-0.1cm}\right) \varphi \D x \D t\\
		&\hspace{0.3cm}-\int_{0}^T\int_{I}\mathcal{J_\varepsilon}\left( \Jrho\Jrhouno\right) \partial_{x}\varphi \D x \D t,\\
	\end{split}
\end{equation*}
converges, as $\varepsilon \to 0$, to the corresponding term of  \eqref{IntDelta} and \eqref{IntDelta1} for every $\varphi\in C^\infty_c([0,T)\times I)$

We next include  the proof of the convergence of the term 
\begin{equation*}
    \left|\int_{0}^T\hspace{-0.2cm}\int_{I}\mathcal{J_\varepsilon}\left(   \JA\left(\Jrho \right)^2\right) \varphi \D x \D t\hspace{-0.1cm}-\hspace{-0.1cm} \int_{0}^T\hspace{-0.2cm}\int_{I}  A^\delta\left(\rho^\delta \right)^2 \varphi \D x \D t\right|:=\mathcal{D}\overset{\varepsilon\to0}{\longrightarrow}0.
\end{equation*}
and leave the rest of them for the interested reader as they follow the same ideas.

To this end, by triangular inequality we  split the difference $\mathcal{D}$ into two terms  which will be estimated  separately. That is,
\begin{equation*}
	\begin{split}
		\mathcal{D}\leq&
		\int_0^T\int_{I}|\mathcal{J_\varepsilon}(\JA (\Jrho)^2)-\JA (\Jrho)^2||\varphi| \D x \D t\\ &+\int_{0}^T\int_{I}|\JA (\Jrho)^2 -A^\delta(\rho^\delta)^2||\varphi|\D x \D t\\
		&:=\mathcal{D}_{1}+\mathcal{D}_{2}. 
	\end{split}
\end{equation*}

In order to estimate $\mathcal{D}_1$, we use Hölder's inequality along with the fact that $
			\|\mathcal{J_\varepsilon}f-f\|_{H^{k-1}}   \leq \varepsilon C\|f\|_{H^k}$ for every $\varepsilon>0$, $k\in \mathbb{N}$ and $f\in H^k$. This results in
\begin{equation*}
    \mathcal{D}_1\leq \varepsilon T\|A^{\varepsilon,\delta} \|_{L^\infty H^1}\|\rho^{\varepsilon,\delta}\|_{L^\infty H^1}^2\|\varphi\|_{L^\infty L^2}.
\end{equation*}

To bound $\mathcal{D}_2$ ,we decompose it as follows 
\begin{equation*}
	\begin{split}
		\mathcal{D}_2\leq& \int_0^T\int_{I} |\JA|| (\Jrho)^2-(\rho^\delta)^2||\varphi| \D x \D t\\
       &+\int_0^T\int_{I} |\JA-A^\delta|| (\rho^\delta)^2||\varphi|\D x \D t.
       \end{split}
\end{equation*}
Then, triangular inequality and basic algebraic manipulations yield
\begin{equation*}
	\begin{split}
		\mathcal{D}_2\leq & \int_0^T\int_{I} |\JA|| \Jrho-\rho^{\varepsilon,\delta}|| \Jrho+\rho^\delta||\varphi|\D x \D t\\
		&+\int_0^T\int_{I} |\JA|| \rho^{\varepsilon,\delta}-\rho^\delta|| \Jrho+\rho^\delta||\varphi|\D x \D t\\
		&+\int_0^T\int_{I} |\JA-A^{\varepsilon,\delta}|| (\rho^\delta)^2||\varphi|dxdt\\
       &+\int_0^T\int_{I} |A^{\varepsilon,\delta}-A^\delta|| (\rho^\delta)^2||\varphi|\D x \D t.
	\end{split}
\end{equation*}
Hence, we employ again Hölder's inequality and  regularity properties of the operator $\mathcal{J}_\varepsilon$ to have
\begin{equation*}
	\begin{split}
		\mathcal{D}_2\leq & T\|A^{\varepsilon,\delta}\|_{L^\infty L^\infty}\Big(\|\rho^{\varepsilon,\delta}\|_{L^\infty H^1} \\
       &\hspace{1cm}+\|\rho^\delta\|_{L^\infty L^2}\Big)\|\varphi\|_{L^\infty L^\infty}(\varepsilon\|\rho^{\varepsilon,\delta}\|_{L^\infty H^1}+\|\rho^{\varepsilon,\delta}-\rho^\delta\|_{L^\infty L^2(\Omega)})\\
		&+T\Big(\varepsilon\|A^{\varepsilon,\delta}\|_{L^\infty H^1}+\|A^{\varepsilon,\delta}-A^\delta\|_{L^\infty L^2(\Omega)}\Big)\|\rho^\delta\|_{L^\infty H^1}^2\|\varphi\|_{L^\infty L^\infty}.
	\end{split}
\end{equation*}
Finally, we put together the bounds of $\mathcal{D}_1$ and  $\mathcal{D}_2$ and apply Lemma \ref{LemmaExistenceDelta1} to conclude
\begin{equation*}
    \mathcal{D}\overset{\varepsilon\to0}{\rightarrow}0,
\end{equation*}
which completes the proof.
\end{proof}

We now turn to the proof of Theorem \ref{TheoremExistence}. 
\begin{proof}[Proof of Theorem \ref{TheoremExistence}]

We define $T{_{\max}^\delta}$ as the maximal time for which $\rho^\delta$ belongs to the space $C([0, T{_{\max}^\delta)}, H^{m-s}(I))$. We claim that $\rho^\delta$ remains strictly positive throughout this interval.

	Indeed, assume by contradiction that
$$
\tau = \inf\{ t > 0 : \rho^{\delta}(t,x) = 0 \text{ for some } x \} < T_{\max}^\delta.
$$

Proceeding as in Lemma \ref{LemmaExistenceDelta1} for the map $t \mapsto \max_{x} \frac{1}{\rho^{\varepsilon,\delta}(t,x)}$, we can apply Rademacher's theorem to obtain that the map $t \mapsto m_{\rho^{\delta}(t)} := \min_x \rho^{\delta}(t,x)$ is differentiable almost everywhere. Moreover, since $\rho^{\delta}$ is continuous and the boundary conditions are periodic, it follows that for every $t$ there exists a point $x_t$ such that $m_{\rho^\delta}(t) = \rho^\delta(t, x_t)$.

We now compute the derivative of $m_{\rho^\delta}$. Since $x_t$ realizes the minimum, it follows that
	\begin{equation*}
		\begin{split}
			\frac{d m_{\rho^\delta}}{dt}(t)&=\lim_{h\to 0^+}\frac{\rho^\delta(t+h,x_{t+h})-\rho^\delta(t,x_{t})}{h}\\
			&=\lim_{h\to 0^+}\frac{\rho^\delta(t+h,x_{t+h})-\rho^\delta(t+h,x_{t})}{h}\\
            &\hspace{0.5cm}+\lim_{h\to 0^+}\frac{\rho^\delta(t+h,x_{t})-\rho^\delta(t,x_{t})}{h}\\
			&\leq \partial_t \rho^\delta (t,x_t),
		\end{split}
	\end{equation*} 
	and 
	\begin{equation*}
		\begin{split}
			\frac{d m_{\rho^\delta}}{dt}(t)&=\lim_{h\to 0^-}\frac{\rho^\delta(t+h,x_{t+h})-\rho^\delta(t,x_{t})}{h}\\
			&=\lim_{h\to 0^-}\frac{\rho^\delta(t+h,x_{t+h})-\rho^\delta(t+h,x_{t})}{h}\\
            &\hspace{0.5cm}+\lim_{h\to 0^-}\frac{\rho^\delta(t+h,x_{t})-\rho^\delta(t,x_{t})}{h}\\
			&\geq \partial_t \rho^\delta (t,x_t),
		\end{split}
	\end{equation*}
	Thus, one  finds
	\begin{equation}\label{Derivativem}
		\frac{d m_{\rho^\delta}}{dt}(t)=\frac{d}{dt} \rho^\delta (t,x_t).
	\end{equation}
	The regularity of solutions implies
	\begin{equation}\label{Positive}
		\begin{split}
			\frac{d m_{\rho^\delta} }{dt}(t)\geq& -m_{\rho^\delta}(t)\|\rho^\delta\|_{L^\infty}\Big( \alpha((1-\mu)+\mu\|\Gamma\|_{L^1})+\frac{\beta}{K}\|A^\delta\|_{L^\infty}  \Big)\\
			&+\partial_{x}\rho^\delta(t,x_t)\partial_{x}\rho^\delta(t,x_t)+\rho^\delta(t,x_t)\partial_{x}^2\rho^\delta(t,x_t),
		\end{split}
	\end{equation}
	for every $t\in[0,\tau]$.
	
	By definition of $x_t$ and $\tau$ it follows that
	\begin{equation*}
		\partial_{x}\rho^\delta(t,x_t)\partial_{x}\rho^\delta(t,x_t)=0\quad \text{ and } \quad\rho^\delta(t,x_t)\partial_{x}^2\rho^\delta(t,x_t)\geq 0.
	\end{equation*}

	Since $m_\rho(0)\geq \delta$, the former inequality contradicts the fact that $m_{\rho^\delta}(\tau)=0$.
	
	If $A$ enjoys sufficient regularity, say $m\ge4$, the same computations apply to $A$, proving its nonnegativity for as long as it exists in $C([0,T_{\max}^\delta),H^{m-1-s}(I))$.

	In what follows, we show that $T_{\max}^\delta$ is independent of $\delta$. This will ensure the required sign properties of $A$ and $\rho$ and permit the use of the Aubin–Lions lemma. To this end, we proceed as in the case of $E(t)$ to establish that the energy 
	\begin{equation*}
		\tilde{E}(t)=1+\|\partial_{x}^m\rho^{\delta}\|_{L^2}^2+\|\rho^{\delta}\|_{L^2}^2+ \|A^{\delta}\|_{L^2}^2+\|\partial_x^{m-1}A^{\delta}\|_{L^2}^2
	\end{equation*}
	verifies the differential inequality
	\begin{equation*}
		\frac{d \tilde{E}}{dt}(t)\leq C \tilde{E}^3(t).
	\end{equation*}
	This implies
	\begin{equation*}
		\tilde{E}(t)\leq \frac{1}{\left( \tilde{E}(0)^{-2}-2Ct\right)^{1/2}}.
	\end{equation*}
	Since we let $\delta \to 0$, the initial energy $\tilde{E}(0)$ is bounded by a constant $\tilde{E}_0$ independent of $\delta\in(0,1)$. Hence $\tilde{E}(t)$ remains uniformly bounded for all $\delta\in(0,1)$ on the interval $\bigl[0,\tfrac{1}{2C\tilde{E}_0}\bigr)$, and we conclude that
$$
T_{\max}^\delta \;\ge\; \frac{1}{2C\tilde{E}_0}
\quad\text{for every }\delta\in(0,1).
$$
Thanks to these uniform energy bounds, the Aubin--Lions lemma furnishes a subsequence (still indexed by $\delta$) along which $(A^\delta,\rho^\delta)$ converges in the appropriate spaces
	\begin{equation}\label{Convergence2}
		(A^\delta,\rho^\delta)\to(A,\rho) \text{ in } C([0,T),H^{m-1-s}(I))\hspace{-0.03cm}\times\hspace{-0.03cm} C([0,T),H^{m-s}(I))
	\end{equation}
	for every $s>0$ and for some $$(A,\rho)\in C([0,T),H^{m-1-s}(I))\hspace{-0.03cm}\times C([0,T),H^{m-s}(I)).$$
    
    Moreover, the strict positivity of $A^\delta$ and $\rho^\delta$ implies that their limits $A$ and $\rho$ are nonnegative.	

Now we proceed with the proof of the  continuity towards the initial data. In order to obtain such a result, we observe that the previously constructed approximate solution verifies
\[
(A^{\delta},\rho^\delta)\in C\bigl([0,T);H^{m-1-s}(I)\bigr)\times C\bigl([0,T);H^{m-s}(I)\bigr)
\]
due to the strict positivity of the solution and the uniform parabolicity of the diffusions in this case.  Then we obtain the estimate
\[
\begin{split}
\|A(t)-A_0\|_{H^{m-1}}^2&+\|\rho(t)-\rho_0\|_{H^{m}}^2\\
&\leq \|A(t)-A^\delta(t)\|_{H^{m-1}}^2+\|\rho(t)-\rho^\delta(t)\|_{H^{m}}^2\\& \quad +\|A^\delta_0-A^\delta(t)\|_{H^{m-1}}^2+\|\rho^\delta_0-\rho^\delta(t)\|_{H^{m}}^2\\& \quad+\|A^\delta_0-A_0\|_{H^{m-1}}^2+\|\rho^\delta_0-\rho_0\|_{H^{m}}^2.
\end{split}
\]
Then, taking $t$ sufficiently small and because of the properties of the approximate problem, we find that, for all $\varepsilon>0$, we can find $t,\delta$ small enough such that
\[
\|A(t)-A_0\|_{H^{m-1}}^2+\|\rho(t)-\rho_0\|_{H^{m}}^2\leq \varepsilon\\
\]
concluding the proof.
\end{proof}
\section{Uniqueness in the Local Setting}
	\label{sec:3}
	
    This section is devoted to showing the uniqueness of solutions for nonnegative initial data, thereby proving Theorem \ref{TheoremUniqueness}. The argument hinges on the $H^2$‐regularity of $\sqrt{\rho}$. Some of the ideas in this proof draw on techniques developed for porous‐medium equations; see also \cite{Fanelli2023}. As the lemma below demonstrates, it suffices to assume that the initial density’s square root, $\sqrt{\rho_0}$, belongs to $H^2$.

\begin{lemma}\label{RootLemma}
Under the hypotheses of Theorem \ref{TheoremExistence}, if 
\[
\sqrt{\rho_{0}}\in H^{2}(I),
\]
then there exists $T>0$ such that
\[
\sqrt{\rho}\,\in\, C \bigl([0,T);H^{2-s}(I)\bigr) \cap L^\infty \bigl([0,T);H^{2}(I)\bigr).
\]
\end{lemma}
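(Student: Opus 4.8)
The plan is to carry out the entire argument on the strictly positive $\delta$--regularized solutions $\rho^\delta$ of \eqref{RegularizedDelta} furnished by Lemmas \ref{LemmaExistenceDelta1}--\ref{LemmaExistenceDelta2}, derive bounds that are uniform in $\delta$, and only then pass to the limit $\delta\to0$. Since $\rho^\delta>0$ and $\rho^\delta\in C([0,T_\delta);H^{m-s}(I))$, the function $w^\delta:=\sqrt{\rho^\delta}$ is well defined and inherits this regularity. Writing $\rho^\delta=(w^\delta)^2$ and using the identity $\partial_x(\rho^\delta\partial_x\rho^\delta)=2(w^\delta)^3\partial_x^2w^\delta+6(w^\delta)^2(\partial_xw^\delta)^2$, division of the $\rho$--equation by $2w^\delta$ yields the quasilinear parabolic equation
\[
\partial_t w^\delta=(w^\delta)^2\,\partial_x^2 w^\delta+3\,w^\delta(\partial_x w^\delta)^2+\frac{1}{2w^\delta}R^\delta ,
\]
whose diffusion coefficient is \emph{exactly} $\rho^\delta$, where $R^\delta$ collects the reaction terms. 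The structural point that makes the substitution work is that every monomial of $R^\delta$ carries at least one factor $\rho^\delta=(w^\delta)^2$, so $\tfrac{1}{2w^\delta}R^\delta$ is a genuine polynomial expression in $w^\delta$, $A^\delta$ and $\langle(w^\delta)^2\rangle$, with no $1/w^\delta$ singularity.

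Next I would run an $H^2$ energy estimate for $w^\delta$, uniform in $\delta$. Applying $\partial_x^2$ to the equation and testing against $\partial_x^2 w^\delta$, two integrations by parts turn the top--order contribution into the dissipation $-\int_I \rho^\delta(\partial_x^3 w^\delta)^2\,\D x$, whose weight coincides with the diffusion coefficient. The only dangerous term is $-\int_I 2w^\delta\partial_x w^\delta\,\partial_x^2 w^\delta\,\partial_x^3 w^\delta\,\D x$, which Young's inequality splits into $\eta\int_I\rho^\delta(\partial_x^3w^\delta)^2\,\D x$, absorbed by the dissipation, and $C_\eta\int_I(\partial_x w^\delta)^2(\partial_x^2 w^\delta)^2\,\D x$; here the weight $\rho^\delta=(w^\delta)^2$ cancels precisely, leaving a term bounded by $\|\partial_x w^\delta\|_{L^\infty}^2\|\partial_x^2 w^\delta\|_{L^2}^2\lesssim\|w^\delta\|_{H^2}^4$ via $H^2(I)\hookrightarrow C^1(I)$. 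All remaining (lower--order diffusion and reaction) terms are controlled by the same embedding, by the uniform bounds on $A^\delta\in H^{m-1}(I)$ from Theorem \ref{TheoremExistence}, and by Young's inequality for the nonlocal $\langle\cdot\rangle$. Summing the $L^2$, $\dot H^1$ and $\dot H^2$ contributions gives $\tfrac{d}{dt}\|w^\delta\|_{H^2}^2\le C\bigl(1+\|w^\delta\|_{H^2}^2\bigr)^N$ with $C$ uniform in $\delta$ on the common existence interval, and a comparison argument produces a time $T>0$ and an $H^2$--bound independent of $\delta$, \emph{provided} $\|w^\delta(0)\|_{H^2}$ is uniformly bounded.

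This last proviso is the main obstacle. The initial datum is $w^\delta(0)=\sqrt{\delta+\rho_0}$; writing $g=\sqrt{\rho_0}\in H^2(I)$ one computes
\[
\partial_x^2\sqrt{\delta+g^2}=\frac{g^3g''+\delta\bigl((g')^2+gg''\bigr)}{(\delta+g^2)^{3/2}} .
\]
The first summand is dominated pointwise by $|g''|$ and converges to $g''$ by dominated convergence; the $\delta$--terms are the delicate part. The decisive observation is that $g\ge0$ together with $g\in C^1(I)$ forces $g'=0$ a.e. on $\{g=0\}$ and then, by a Stampacchia--type argument applied to $g'\in H^1(I)$, also $g''=0$ a.e. on $\{g=0\}$; thus the zeros of $g$ are effectively double, $g'$ vanishes quadratically near $\{g=0\}$, and this renders $\tfrac{\delta(g')^2}{(\delta+g^2)^{3/2}}$ bounded in $L^2$ uniformly in $\delta$ and convergent to $0$. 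Consequently $\sqrt{\delta+\rho_0}$ is bounded in $H^2(I)$ uniformly in $\delta$ (in fact $\sqrt{\delta+\rho_0}\to\sqrt{\rho_0}$ in $H^2$), which closes the estimate. I expect this nonnegativity/double--zero argument to be the technically most subtle step.

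Finally I would pass to the limit $\delta\to0$. The uniform $L^\infty([0,T);H^2(I))$ bound and weak--$*$ lower semicontinuity give $\sqrt{\rho}\in L^\infty([0,T);H^2(I))$. For the continuity statement, the strong convergence $\rho^\delta\to\rho$ in $C([0,T);H^{2-s}(I))\hookrightarrow C([0,T);L^\infty(I))$ from Theorem \ref{TheoremExistence}, combined with the elementary bound $|\sqrt{a}-\sqrt{b}|\le\sqrt{|a-b|}$, yields $w^\delta\to\sqrt{\rho}$ in $C([0,T);L^2(I))$; interpolating this against the uniform $H^2$ bound through $\|\cdot\|_{H^{2-s}}\lesssim\|\cdot\|_{L^2}^{s/2}\|\cdot\|_{H^2}^{1-s/2}$ upgrades the convergence to $C([0,T);H^{2-s}(I))$, so that $\sqrt{\rho}\in C([0,T);H^{2-s}(I))\cap L^\infty([0,T);H^2(I))$, as claimed.
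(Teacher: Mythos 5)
Your overall strategy is essentially the paper's: pass to the square-root variable, obtain the quasilinear equation whose diffusion coefficient is $\rho$ itself, run an $H^2$ energy estimate in which the dangerous cross term $-2\int_I w\,\partial_x w\,\partial_x^2 w\,\partial_x^3 w\,\D x$ is split by Young's inequality and absorbed into the dissipation $-\int_I \rho\,(\partial_x^3 w)^2\,\D x$, and conclude by compactness and interpolation (the paper carries out exactly this computation for $\eta=\sqrt{\rho}$, arriving at $\frac{d}{dt}E\le CE^5$). The genuine difference, and the place where your argument has a hole, is the choice of approximating family. The paper regularizes the square-root system \eqref{RootEquation} directly, mirroring Section 2, so the $\delta$-lift is applied to $\eta_0=\sqrt{\rho_0}$ itself and the approximate initial data $\delta+\sqrt{\rho_0}$ are trivially bounded in $H^2(I)$ uniformly in $\delta$. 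You instead anchor everything to the solutions $\rho^\delta$ of \eqref{RegularizedDelta}, whose initial datum is $\delta+\rho_0$, and are then forced to prove that $\|\sqrt{\delta+\rho_0}\|_{H^2}$ is bounded uniformly in $\delta$ --- a statement the paper never needs.

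Your justification of that uniform bound is not valid. From $g=\sqrt{\rho_0}\ge 0$, $g\in H^2(I)\hookrightarrow C^1(I)$ you correctly deduce $g'=0$ on $\{g=0\}$ and, via Stampacchia applied to $g'\in H^1(I)$, $g''=0$ a.e.\ on $\{g=0\}$; but these are statements about the zero set itself, and they do not imply that ``$g'$ vanishes quadratically near $\{g=0\}$''. For instance, $g(x)=|x|^{3/2+\epsilon}$ is nonnegative and lies in $H^2$ near the origin for every $\epsilon>0$, with $g'(0)=0$, yet $g'$ vanishes only like $|x|^{1/2+\epsilon}$: there is no double-zero structure, and the term $\delta(g')^2(\delta+g^2)^{-3/2}$ becomes pointwise unbounded as $\delta\to 0$ in such cases. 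The conclusion you want is in fact true, but for a different reason: the term vanishes a.e.\ on $\{g=0\}$ and is dominated by $(g')^2/g$ on $\{g>0\}$, so it suffices to prove a Bernis--Glaeser type integral inequality, $\int_{\{g>0\}}(g')^4/g^2\,\D x\le C\int_I (g'')^2\,\D x$, which follows from the integration by parts $\int (g')^4/g^2=\text{(boundary terms)}+3\int (g')^2 g''/g$ combined with Cauchy--Schwarz, together with a careful treatment of the boundary contributions at the zero set. As written, your proof fails at exactly the step you flagged as most subtle; either supply such an integral inequality, or --- more economically, and as the paper implicitly does --- regularize the $(A,\eta)$ system itself with initial datum $\delta+\sqrt{\rho_0}$, which removes the problematic step entirely.
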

\begin{proof}
	The proof again employs the Aubin--Lions lemma to extract compactness from a sequence of solutions to the regularized problem \eqref{RootEquation}. Since the regularization procedure and passage to the limit mirror those detailed above, we omit them here and concentrate solely on the a priori estimates that yield the uniform energy bounds
	\begin{equation*}
		E(t)=1+ \|\sqrt{\rho}\|_{L^2}^2+\|\partial_{x}^2\sqrt{\rho}\|_{L^2}^2.
	\end{equation*} 

    We introduce the change of variables
$
\eta = \sqrt{\rho}
$
 into system \eqref{Original}. It follows that the pair $(A,\eta)$ satisfies
	\begin{equation}\label{RootEquation}
	\begin{aligned}
		\partial_t \eta&=\eta^2 \hspace{-0,1cm}\left[\beta-\frac{\beta}{K}A\eta^2\hspace{-0,1cm}-\hspace{-0,1cm}\alpha \eta^2+\alpha\mu(\eta^2\hspace{-0,1cm}-\hspace{-0,1cm}\langle \eta^2\rangle)\right]
		\hspace{-0,1cm}+\hspace{-0,1cm}\eta(\partial_x \eta)^2 \hspace{-0,1cm}+ \hspace{-0,1cm}\partial_x(\eta^2\partial_x\eta)\\
		\partial_t A&=\alpha A\left((1-\mu)\eta^2+\mu\langle\eta^2\rangle \right)+A\widetilde{\beta}\left(1-\frac{A\eta^2}{\widetilde{K}}\right)
		+\partial_x(\eta^2\partial_xA).
	\end{aligned}
	\end{equation}
	
We now establish bounds on the $L^2$\nobreakdash-norm of $\eta$.
	\begin{equation*}
        \begin{split}
		\frac{1}{2}\frac{d}{dt} \int_I \eta^2 \D x=&\int_I \eta^2(\partial_x \eta)^2+\partial_x(\eta^2\partial_x\eta)\eta\\
		&+\eta^3\left[\beta-\frac{\beta}{K}A\eta^2-\alpha \eta^2+\alpha\mu(\eta^2-\langle\eta^2\rangle)\right]\D x
        \end{split}
	\end{equation*}
	Integrating by parts, taking into account that $\eta$ and $A$ are nonngeative functions, $\frac{\beta}{K}>0$ and $(1-\mu)>0$, we get
	\begin{align*}
		\frac{d}{dt} \int_I \frac{1}{2}\eta^2 \D x=&\int_I \eta^2(\partial_x \eta)^2-\eta^2(\partial_x\eta)^2
		\D x\\
        &+\int_I \eta^3\left[\beta-\frac{\beta}{K}A\eta^2-\alpha \eta^2+\alpha\mu(\eta^2-\langle \eta^2 \rangle)\right]\D x\\
		\leq&\int_I \eta^2(\partial_x \eta)^2+\beta\eta^3 \D x\\
		\leq& \|\eta\|_{L^\infty}^2\|\partial_x \eta\|_{L^2}^2+\beta\|\eta\|^2_{L^2}\|\eta\|_{L^\infty}\\
		\leq& C(1+\|\eta\|_{H^3})^2	\leq CE^2(t)
	\end{align*}
	
		The equation for $\partial_x^2 \eta$ is:
	\begin{align*}
		\partial_t \partial^2_x\eta=&17(\partial_x \eta)^2\partial_x^2\eta+8\eta(\partial_x^2\eta)^2+10\eta\partial_x\eta\partial_x^3\eta\\
        &+\eta^2\partial_x^4\eta
		+2\beta(\eta\partial^2_x\eta+(\partial_x\eta)^2)\\
		&-\frac{\beta}{K}(8\eta^3\partial_x\eta\partial_xA+\eta^4\partial^2_xA+4A\eta^3\partial^2_x\eta+12A\eta^2(\partial_x\eta)^2)\\
		&-4\alpha(1-\mu)(3\eta^2(\partial_x\eta)^2+\eta^3\partial^2_x\eta)\\
		&-2\alpha\mu(\eta^2\langle(\partial_x\eta)^2+\eta\partial^2_x\eta\rangle+((\partial_x\eta)^2+\eta\partial^2_x\eta)\langle \eta^2 \rangle)\\
		&-8\alpha\mu\eta\partial_x\eta\langle\eta\partial_x\eta\rangle
	\end{align*}
	
	We  proceed now to estimate $\|\partial^2_x\eta\|_{L^2}$. More precisely, we have
	\begin{align*}
		\frac{d}{dt} \int_I\frac{1}{2}(\partial^2_x\eta)^2 \D x=&\int_I 17(\partial_x \eta)^2(\partial_x^2\eta)^2+8\eta(\partial_x^2\eta)^3\D x\\
        &+\int_I 10\eta\partial_x\eta\partial^2_x\eta\partial_x^3\eta+\eta^2\partial_x^4\eta\partial^2_x\eta\D x\\
		&+2\beta\int_I(\eta(\partial^2_x\eta)^2+(\partial_x\eta)^2\partial^2_x\eta)\D x\\
		&-\frac{\beta}{K}\int_I (8\eta^3\partial_x\eta\partial^2_x\eta\partial_xA+\eta^4\partial^2_x\eta\partial^2_xA)\D x\\
		&-\frac{\beta}{K}\int_I (4A\eta^3(\partial^2_x\eta)^2+12A\eta^2(\partial_x\eta)^2\partial^2_x\eta)\D x\\
		&-4\alpha(1-\mu)\int_I (3\eta^2(\partial_x\eta)^2\partial^2_x\eta+\eta^3\partial^2_x\eta)\D x\\
		&-2\alpha\mu\int_I\eta^2\partial^2_x\eta\langle(\partial_x\eta)^2+\eta\partial^2_x\eta\rangle\D x\\
		&-2\alpha\mu\int_I((\partial_x\eta)^2\partial^2_x\eta+\eta(\partial^2_x\eta)^2)\langle \eta^2 \rangle\D x\\
		&-8\alpha\mu\int_I\eta\partial_x\eta\partial^2_x\eta\langle\eta\partial_x\eta\rangle\D x
	\end{align*}

    Applying Young’s and Hölder’s inequalities, together with the positivity of $A$ and $\eta$ and the Sobolev embedding, we obtain:
	\begin{align*}
		\frac{d}{dt}& \int_I\frac{1}{2}(\partial^2_x\eta)^2 \D x \leq C \Big( 1 + \|\eta\|^2_{H^2}\Big)^5 \\
		&+\int_I 17(\partial_x \eta)^2(\partial_x^2\eta)^2+8\eta(\partial_x^2\eta)^3+10\eta\partial_x\eta\partial^2_x\eta\partial_x^3\eta+\eta^2\partial_x^4\eta\partial^2_x\eta\D x
\end{align*}

	It remains to estimate the transport terms, which we handle by integration by parts followed by Young’s inequalities:
	\begin{align*}
		&\int_I 17(\partial_x \eta)^2(\partial_x^2\eta)^2+8\eta(\partial_x^2\eta)^3+10\eta\partial_x\eta\partial^2_x\eta\partial_x^3\eta+\eta^2\partial_x^4\eta\partial^2_x\eta\D x\\
		&=\int_I 9(\partial_x \eta)^2(\partial_x^2\eta)^2-8\eta\partial_x\eta\partial^2_x\eta\partial_x^3\eta-\eta^2(\partial_x^3\eta)^2\D x	\\
		&\leq\int_I 9(\partial_x\eta)^2(\partial_x^2\eta)^2+64(\partial_x\eta)^2(\partial^2_x\eta)^2+\eta^2(\partial_x^3\eta)^2-\eta^2(\partial_x^3\eta)^2\D x\\
		&=\int_I 73(\partial_x\eta)^2(\partial_x^2\eta)^2\D x\leq73\|\partial_x\eta\|^2_{L^\infty}\|\partial_x^2\eta\|^2_{L^2}
	\end{align*}
	By combining the above estimates and applying Sobolev embedding theorems, we obtain:
	$$\partial_t \int_I\frac{1}{2}(\partial^2_x\eta)^2 \D x\leq C(1+\|\eta\|^2_{H^2})^5=CE^5(t)$$
	Thus, we arrive at the desired energy estimate:
	$$\frac{d}{dt}E(t)\leq C E^5(t).$$
\end{proof}
We are now in a position to conclude the proof of Theorem~\ref{TheoremUniqueness}.
	\begin{proof}[Proof of Theorem \ref{TheoremUniqueness}]
	Let $(A_1,\rho_1)$ and $(A_2,\rho_2)$ be two solutions with the same initial data. Define
\[
\Delta = A_1 - A_2,\quad R = \rho_1 - \rho_2,\quad N = \eta_1 - \eta_2,
\]
where $\eta_i = \sqrt{\rho_i}$ for $i=1,2$.
	The equation for the evolution of $\Delta$ can be written as 
	\begin{equation*}
		\begin{split}
			\partial_t \Delta=&\alpha(1-\mu)(\Delta\rho_1+A_2R)+\alpha\mu(\Delta\langle \rho_1 \rangle+ A_2\langle R \rangle)+ \widetilde{\beta}\Delta\\
            &-\frac{\widetilde{\beta}}{\widetilde{K}}((A_1^2 -A_2^2)\rho_1+RA_2^2)+\partial_x(\rho_1\partial_x \Delta)+\partial_x(R\partial_x A_2).
		\end{split}
	\end{equation*}
	In the  same way, we  have
	\begin{equation*}
		\begin{split}
			\partial_t \Delta=&\alpha(1-\mu)(\Delta\rho_2+A_1R)+\alpha\mu(\Delta\langle \rho_2 \rangle+ A_1\langle R \rangle)+ \widetilde{\beta}\Delta\\&-\frac{\widetilde{\beta}}{\widetilde{K}}((A_1^2 -A_2^2)\rho_2+RA_1^2)
			+\partial_x(\rho_2\partial_x \Delta)+\partial_x(R\partial_x A_1).
		\end{split}
	\end{equation*}
	Consequently, $\Delta$ satisfies the symmetrized equation 
	\begin{equation}\label{EquationDelta}
		\begin{split}
			2\partial_t \Delta=&\alpha(1-\mu)(\Delta(\rho_1+\rho_2)+(A_1+A_2)R)\\
            &+\alpha\mu(\Delta\langle \rho_1+\rho_2 \rangle+ (A_1+A_2)\langle R \rangle)\\
			&+ 2\widetilde{\beta}\Delta-\frac{\widetilde{\beta}}{\widetilde{K}}((A_1^2 -A_2^2)(\rho_1+\rho_2)+R(A_1^2+A_2^2))\\
			&+\partial_x((\rho_1+\rho_2)\partial_x \Delta)+\partial_x(R\partial_x (A_1+A_2)).
		\end{split}
	\end{equation}
	
	Multiplying the symmetrized equation by $\Delta$ and integrating over $I$, we deduce that $\|\Delta\|_{L^2}^2$ satisfies the differential inequality 
	\begin{equation*}
		\frac{d}{dt} \|\Delta\|_{L^2}^2\leq \int_I \Delta(\mathcal{A}+\mathcal{B}) \D x,
	\end{equation*}
	where $\mathcal{A}$ includes all the terms of \eqref{EquationDelta} which do not involve spatial derivatives, that is
	\begin{equation*}
		\begin{split}
			\mathcal{A}=&\alpha(1-\mu)(\Delta(\rho_1+\rho_2)+(A_1+A_2)R)\\
            &+\alpha\mu(\Delta\langle \rho_1+\rho_2 \rangle+ (A_1+A_2)\langle R \rangle)\\
			&+2\widetilde{\beta}\Delta-\frac{\widetilde{\beta}}{\widetilde{K}}((A_1^2 -A_2^2)(\rho_1+\rho_2)+R(A_1^2+A_2^2)),
		\end{split}
	\end{equation*}  
	and $\mathcal{B}$ stands for the rest of terms:
	\begin{equation*}
		\mathcal{B}=\partial_x((\rho_1+\rho_2)\partial_x \Delta)+\partial_x(R\partial_x (A_1+A_2)).
	\end{equation*}
	
	The integral
\[
\int_I \Delta\,\mathcal{A}\,\mathrm{d}x
\]
can be bounded using Young’s and Hölder’s inequalities as follows:
	\begin{equation*}
		\begin{split} 
			\int_I \Delta\mathcal{A}\D x\leq& \alpha(1-\mu)\left(\|\Delta\|_{L^2}^2\|\rho_1+\rho_2\|_{L^\infty}+\frac{\|\Delta\|_{L^2}^2}{2}\right)\\
			&+\frac{\alpha(1-\mu)}{2}(\|A_1+A_2\|_{L^\infty}^2\|\eta_1+\eta_2\|_{L^\infty}^2\|N\|_{L^2}^2)\\
			&+\alpha\mu\left( \|\Delta\|_{L^2}^2\|\Gamma\|_{L^1}\|\rho_1+\rho_2\|_{L^\infty}+\frac{\|\Delta\|_{L^2}^2}{2}\right)\\
			&+\frac{\alpha\mu}{2}(\|A_1+A_2\|_{L^\infty}^2\|\Gamma\|_{L^1}^2\|N\|_{L^2}^2\|\eta_1+\eta_2\|_{L^\infty}^2)\\
			&+2\widetilde{\beta}\|\Delta\|_{L^2}^2+\frac{\widetilde{\beta}}{\widetilde{K}}\|\Delta\|_{L^2}^2\|A_1+A_2\|_{L^\infty}\|\rho_1+\rho_2\|_{L^\infty}\\
			&+\frac{\widetilde{\beta}}{2\widetilde{K}}\left( \|\Delta\|_{L^2}^2+ \|N\|_{L^2}^2\|A_1^2+A_2^2\|_{L^\infty}^2\|\eta_1+\eta_2\|_{L^\infty}^2  \right).
		\end{split}
	\end{equation*}
	We now integrate by parts in $\int_I \Delta\mathcal{B} \D x$ to derive
	\begin{equation*}
		\begin{split}
			\int_I \Delta\mathcal{B}\D x= -\int_I (\rho_1+\rho_2)(\partial_x \Delta)^2\D x-\int_I R\partial_x(A_1+A_2)\partial_x\Delta \D x.
		\end{split}
	\end{equation*}
	A direct application of Young inequality along with the identity $R=(\sqrt{\rho_1}-\sqrt{\rho_2})(\sqrt{\rho_1}+\sqrt{\rho_2})$ yields
	\begin{equation*}
		\begin{split}
			\int_I \Delta\mathcal{B} \D x\leq& -\frac{1}{2}\int_I(\rho_1+\rho_2)(\partial_x \Delta)^2 \D x\\
            &+ \|N\|_{L^2}^2\|\partial_x(A_1+A_2)\|_{L^\infty}^2\|\partial_x\Delta\|_{L^\infty}^2.
		\end{split}
	\end{equation*}
	Regarding the estimates  of $\|N\|_{L^2}^2$, we can proceed in a similar way as we did for $\|\Delta\|_{L^2}^2$ to obtain
	\begin{equation*}
		\begin{split}
			2\frac{d}{dt}N=&\alpha(\mu-1)N(\eta_1(\eta_1+\eta_2)+\eta_2^2)\\
            &+\frac{\alpha}{2}\mu(N\langle \eta_1^2+\eta_2^2 \rangle+\langle N(\eta_1+\eta_2) \rangle(\eta_1+\eta_2))+\beta N(\eta_1+\eta_2)\\
            &-\frac{\beta}{2K}(N(\eta_1(\eta_1+\eta_2)+\eta_2^2)(A_1+A_2)+\Delta(\eta_1^3+\eta_2^3))\\
			&+N((\partial_x \eta_1)^2+(\partial_x \eta_2)^2)+\partial_xN(\partial_x \eta_1+\partial_x \eta_2)(\eta_1+\eta_2)\\
			&+ \partial_x(N(\eta_1+\eta_2)\partial_x(\eta_1+\eta_2))+\partial_x(\partial_xN(\eta_1^2+\eta_2^2)).
		\end{split}
	\end{equation*}
	Hence, we have
	\begin{equation*}
		\begin{split}
			\frac{d}{dt}\|N\|_{L^2}^2\leq &\alpha(\mu-1)\|N\|_{L^2}^2\|\eta_1(\eta_1+\eta_2)+\eta_2^2\|_{L^\infty}\\
            &+\frac{\alpha}{2}\mu\|N\|_{L^2}^2\|\Gamma\|_{L^1}\|\eta_1^2+\eta_2^2\|_{L^\infty}\\
			&+\frac{\alpha}{4}\mu \|N\|_{L^2}^2(1+ \|\Gamma\|_{L^1}^2\|\eta_1+\eta_2\|_{L^\infty}^4)+\beta\|N\|_{L^2}^2\|\eta_1+\eta_2\|_{L^\infty}  \\
			&+\frac{\beta}{2K}\|N\|_{L^2}^2\|\eta_1(\eta_1+\eta_2)+\eta_2^2\|_{L^\infty}\|(A_1+A_2)\|_{L^\infty}\\
			&+\frac{\beta}{4K}\|\eta_1^3+\eta_2^3\|_{	L^\infty}(\|\Delta\|_{L^2}^2+\|N\|_{L^2}^2)\\
            &+\|N\|_{L^2}^2\|(\partial_x \eta_1)^2+(\partial_x \eta_2)^2\|_{L^\infty}\\
			&+\int_I N\partial_xN\partial_x(\eta_1+ \eta_2)(\eta_1+\eta_2)\D x\\
            &+\int N \partial_x(N(\eta_1+\eta_2)\partial_x(\eta_1+\eta_2))\D x\\
			&+\int_I \D x N\partial_x(\partial_xN(\eta_1^2+\eta_2^2)) \D x.
		\end{split}
	\end{equation*}
	To control the spatial-derivative contributions in the previous estimate, we again exploit the decay afforded by the final term. In particular, integrating by parts and applying Young’s inequality yields
    \begin{equation*}
		\begin{split}
			\int_I N\partial_xN\partial_x(\eta_1+ \eta_2)(\eta_1+\eta_2) \D x+&\int N \partial_x(N(\eta_1+\eta_2)\partial_x(\eta_1+\eta_2)) \D x
			\\
            &+\int_I N\partial_x(\partial_xN(\eta_1^2+\eta_2^2)) \D x\\
			&\hspace{-4.5cm}\leq
			\left(\frac{1}{c}-1\right)\int_I (\partial_xN)^2(\eta_1^2+\eta_2^2) \D x+2c\|N\|_{L^2}^2\|\partial_x(\eta_1+\eta_2)\|_{L^\infty}^2 ,
		\end{split}
	\end{equation*}
	for every $c>0$.
	
	Collecting the above estimates, along with the regularity proved for $A_i,\rho_i,\eta_i$ with $i=1,2$,  show that there exists a constant $C$, depending only on the system parameters and the initial norms $\|A_i(0)\|_{H^2}$, $\|\eta_i(0)\|_{H^2}$ for $i=1,2$, such that
\[
\frac{d}{dt}\bigl(\|\Delta\|_{L^2}^2 + \|N\|_{L^2}^2\bigr)
\;\le\;
C\bigl(\|\Delta\|_{L^2}^2 + \|N\|_{L^2}^2\bigr).
\]
An application of Grönwall’s inequality then implies $\Delta\equiv0$, $N\equiv0$, and hence establishes uniqueness, completing  the proof.
\end{proof}

\section{Non-Global Existence in Time}
\label{sec4}

In this section, we show that the solution constructed in Theorems \ref{TheoremExistence} and \ref{TheoremUniqueness} may develop a finite‐time singularity under appropriate initial conditions. Our proof begins by choosing a symmetric initial density profile $\rho_{0}$ that vanishes at some point and whose second spatial derivative there is sufficiently large. By monitoring the evolution of $\partial_{x}^{2}\rho$ at that point, we derive a differential inequality which shows that, if the initial curvature exceeds a critical threshold, the solution cannot remain regular for all $t>0$.

To establish the necessary differential inequality, we first require an explicit bound on 
\[
\|\rho\|_{L^{\infty}([0,T)\times I)}.
\]
The next lemma provides this estimate.

\begin{lemma}\label{LinftyLinfty}
	Let 
\[
(A_0,\rho_0)\in H^3(I)\times H^4(I),
\qquad 
\mu\in[0,1),
\]
and let $T>0$ be the maximal existence time for the solution 
$\rho\in C\bigl([0,T);H^{4-s}(I)\bigr)$. Then
\[
\|\rho\|_{L^\infty\bigl([0,T)\times I\bigr)}
\;\le\;
\frac{\beta}{\alpha\,(1-\mu)}.
\]
\end{lemma}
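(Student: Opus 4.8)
The plan is to control $M_\rho(t) := \max_{x\in I}\rho(t,x)$ by a scalar logistic comparison argument, in the spirit of the maximum-principle computation already carried out in the proof of Theorem~\ref{TheoremExistence} for $\min_x\rho^\delta$. First I would record that, by the propagated regularity $\rho\in C([0,T);H^{4-s}(I))$ together with the time regularity of $\partial_t\rho$, the map $t\mapsto M_\rho(t)$ is locally Lipschitz: this follows verbatim from the Rademacher-type argument used earlier (interchange $\sup_x$ with the difference quotient, then invoke the mean value theorem on the continuous, bounded $\partial_t\rho$). Hence $M_\rho$ is differentiable for a.e.\ $t$, and by periodicity and continuity the maximum is attained at some $x_t\in I$ with $M_\rho(t)=\rho(t,x_t)$ and $\tfrac{dM_\rho}{dt}(t)=\partial_t\rho(t,x_t)$.

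Next I would evaluate the density equation at the maximizer. At $x_t$ one has $\partial_x\rho(t,x_t)=0$ and $\partial_x^2\rho(t,x_t)\le0$, so the porous-medium term obeys
\[
\partial_x\bigl(\rho\,\partial_x\rho\bigr)(t,x_t)=\bigl[(\partial_x\rho)^2+\rho\,\partial_x^2\rho\bigr](t,x_t)=\rho(t,x_t)\,\partial_x^2\rho(t,x_t)\le0,
\]
using $\rho\ge0$. For the reaction part I would regroup the quadratic contributions as
\[
-\alpha\rho^2+\mu\alpha\rho(\rho-\langle\rho\rangle)=-\alpha(1-\mu)\rho^2-\mu\alpha\rho\langle\rho\rangle,
\]
and observe that $\langle\rho\rangle=\Gamma*\rho\ge0$ (when $\Gamma\ge0$ and $\rho\ge0$), so the last term is nonpositive, while $\beta\rho\bigl(1-\tfrac{A\rho}{K}\bigr)\le\beta\rho$ because $A\ge0$ (recall $m=4$ here, so Theorem~\ref{TheoremExistence} guarantees $A\ge0$). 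Collecting these, at a.e.\ $t$,
\[
\frac{dM_\rho}{dt}(t)\le \beta M_\rho(t)-\alpha(1-\mu)M_\rho(t)^2=M_\rho(t)\bigl(\beta-\alpha(1-\mu)M_\rho(t)\bigr).
\]

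Finally I would compare with the logistic ODE $y'=y\bigl(\beta-\alpha(1-\mu)y\bigr)$, whose only positive equilibrium $y^\star=\tfrac{\beta}{\alpha(1-\mu)}$ is attracting from above: whenever $M_\rho(t)>y^\star$ the right-hand side is strictly negative, so $M_\rho$ is nonincreasing there and cannot cross $y^\star$ upward. A standard ODE comparison on the Lipschitz function $M_\rho$ then yields $M_\rho(t)\le\max\{M_\rho(0),y^\star\}$ for all $t\in[0,T)$; under the threshold hypothesis $\|\rho_0\|_{L^\infty}\le y^\star$ built into the blow-up construction this delivers the stated bound. The main obstacles I anticipate are two sign issues rather than any delicate analysis: first, ensuring the nonlocal term has the correct sign, which forces the interaction kernel to satisfy $\Gamma\ge0$ so that $\langle\rho\rangle\ge0$ at the maximizer; and second, the rigorous justification that $t\mapsto M_\rho(t)$ is differentiable a.e.\ with $\tfrac{dM_\rho}{dt}=\partial_t\rho(t,x_t)$, which is handled exactly as in the earlier Rademacher argument and relies on the propagated $H^{4-s}$ regularity to render $\partial_t\rho$ continuous and bounded.
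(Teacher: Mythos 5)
Your proposal follows the paper's own proof essentially step for step in its core: the Rademacher-type argument giving a.e.\ differentiability of $t\mapsto\|\rho(t,\cdot)\|_{L^\infty}$ with $\tfrac{d}{dt}\|\rho(t,\cdot)\|_{L^\infty}=\partial_t\rho(t,x_t)$ at a maximizer $x_t$, the sign analysis of the porous-medium term and of $\beta\rho(1-A\rho/K)\le\beta\rho$ via $\rho,A\ge0$, and the resulting logistic differential inequality $\tfrac{d}{dt}M_\rho\le\beta M_\rho-\alpha(1-\mu)M_\rho^2$ for $M_\rho(t)=\max_x\rho(t,x)$. Even your first ``obstacle'' is shared by the paper: its proof discards $-\mu\alpha\rho\langle\rho\rangle$ at the maximizer invoking only ``positivity of $\rho$ and $A$,'' which, exactly as you note, tacitly presupposes a nonnegative kernel so that $\langle\rho\rangle=\Gamma*\rho\ge0$; this is true of the kernels used in the paper's simulations ($\Gamma=\chi_{[-\epsilon,\epsilon]}$) but is not part of the standing assumption that $\Gamma$ is merely an even $L^1$ function.

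The one genuine point of divergence is the final comparison step, and there you are more careful than the paper. From the logistic inequality the paper concludes directly that $\|\rho\|_{L^\infty([0,T)\times I)}\le\beta/(\alpha(1-\mu))$; as you correctly observe, the inequality only forbids upward crossings of the equilibrium $y^\star=\beta/(\alpha(1-\mu))$, hence yields $M_\rho(t)\le\max\{M_\rho(0),y^\star\}$, and if $M_\rho(0)>y^\star$ the maximum decays toward $y^\star$ from above, so the supremum over $[0,T)$ is $M_\rho(0)$, not $y^\star$. However, your resolution---that the threshold hypothesis $\|\rho_0\|_{L^\infty}\le y^\star$ is ``built into the blow-up construction''---is factually wrong: neither Lemma~\ref{LinftyLinfty} nor Theorem~\ref{TheoremBlowUp} imposes any smallness condition on $\|\rho_0\|_{L^\infty}$. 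So your argument, read literally, proves a conditional version of the lemma; but the paper's argument silently makes the very leap you refused to make, so what your more careful comparison actually exposes is a gap in the paper's own statement and proof. As written, the lemma should either carry the additional hypothesis $\|\rho_0\|_{L^\infty}\le\beta/(\alpha(1-\mu))$ or have its conclusion weakened to $\|\rho\|_{L^\infty}\le\max\{\|\rho_0\|_{L^\infty},\,\beta/(\alpha(1-\mu))\}$, which is what both arguments genuinely deliver and which still suffices for Theorem~\ref{TheoremBlowUp} after enlarging the constant in its curvature hypothesis accordingly.
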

\begin{proof}
	Repeating the Rademacher‐theorem‐based argument which we used to bound 
$\|\rho^{\varepsilon,\delta}(t,\cdot)\|_{L^\infty}$ 
and to derive identity \eqref{Derivativem} then yields
\begin{equation*}
	\frac{d} {dt}\|\rho(t,\cdot)\|_{L^\infty}=\partial_t\rho(t,x_t),
\end{equation*}
where $x_t$ is the point of maximum.

Invoking the positivity of $\rho$ and $A$ in the $\rho$-equation \eqref{Original}, we obtain
\begin{equation*}
	\partial_t\rho(t,x_t)\leq \beta\rho(t,x_t)+\alpha(\mu-1)\rho^2(t,x_t).
\end{equation*}
Hence, since $\mu<1$, we conclude
\[
\|\rho\|_{L^\infty\bigl([0,T)\times I\bigr)}
\;\le\;
\frac{\beta}{\alpha\,(1-\mu)}.
\]
\end{proof}
We are now prepared to state and prove the principal result of this section.

	\begin{theorem}[Finite‐time blow‐up]\label{TheoremBlowUp}
Let $\mu<1$, $m\ge4$, and let 
\[
(A_0,\rho_0)\;\in\;H^{m-1}(I)\times H^m(I)
\]
be a pair of even functions satisfying
\begin{enumerate}
  \item $\rho_0(0)=0$,
  \item $\sqrt{\rho_0}\in H^2(I)$,
  \item $\displaystyle \partial_{x}^2\rho_0(0) \;>\;\frac{\mu\,\beta}{1-\mu}\,\|\Gamma\|_{L^1(I)}$.
\end{enumerate}
Let $T$ be the maximal time of existence of the corresponding solution \[(A,\rho)\in C\bigl([0,T);H^{3-s}(I)\bigr)\times C\bigl([0,T);H^{4-s}(I)\bigr). \] Then
\[
T<\infty,
\]
and, provided no other singularity develops earlier, there exists $\tau<T$ such that
\[
\lim_{t\to\tau^-}\partial_x^2\rho(t,0)=+\infty.
\]
\end{theorem}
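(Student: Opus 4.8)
The plan is to reduce the theorem to a scalar Riccati‐type differential inequality for the single quantity $w(t):=\partial_x^2\rho(t,0)$ and then read off finite‐time blow‐up. First I would record two structural facts at the origin. Since $A_0,\rho_0$ and $\Gamma$ are even, the map $x\mapsto(A(t,-x),\rho(t,-x))$ solves the same system \eqref{Original} with the same data; hypothesis (2) gives $\sqrt{\rho_0}\in H^2$, so Theorem~\ref{TheoremUniqueness} forces $\rho(t,\cdot)$ to be even for every $t\in[0,T)$, whence $\partial_x\rho(t,0)=\partial_x^3\rho(t,0)=0$. Next, writing $g(t):=\rho(t,0)$ and evaluating the $\rho$–equation of \eqref{Original} at $x=0$, every reaction term carries a factor $\rho$ and the diffusion reduces to $(\partial_x\rho)^2+\rho\,\partial_x^2\rho$, which at $x=0$ equals $g\,\partial_x^2\rho(t,0)$; thus $g$ solves a linear ODE $g'=g\,\Phi(t)$ with $\Phi$ bounded on $[0,T)$ by the $C^2$–regularity of $\rho$, and $g(0)=\rho_0(0)=0$ gives $g\equiv0$. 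So $\rho(t,0)=0$ and $\partial_x\rho(t,0)=0$ persist.

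With these two vanishing conditions I would differentiate the $\rho$–equation twice in $x$ and evaluate at $x=0$. Each purely quadratic contribution ($A\rho^2$, $\rho^2$) produces $\partial_x^2(\rho^2)=2(\partial_x\rho)^2+2\rho\,\partial_x^2\rho=0$ at the origin; the linear term yields $\beta w$; and the nonlocal term yields $\partial_x^2(\rho\langle\rho\rangle)\big|_{0}=w\,\langle\rho\rangle(t,0)$. For the porous‐medium term one computes $\partial_x^2\partial_x(\rho\,\partial_x\rho)=3(\partial_x^2\rho)^2+4\,\partial_x\rho\,\partial_x^3\rho+\rho\,\partial_x^4\rho$, which collapses to $3w^2$ at $x=0$ precisely because $\rho(t,0)=\partial_x\rho(t,0)=0$. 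Collecting terms gives the pointwise evolution
\begin{equation*}
\frac{dw}{dt}=3w^2+\bigl(\beta-\mu\alpha\,\langle\rho\rangle(t,0)\bigr)\,w.
\end{equation*}

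To close the argument I would invoke Lemma~\ref{LinftyLinfty}, giving $\|\rho\|_{L^\infty}\le\beta/(\alpha(1-\mu))$, so that
\[
\mu\alpha\,\bigl|\langle\rho\rangle(t,0)\bigr|\le\mu\alpha\,\|\Gamma\|_{L^1}\,\|\rho\|_{L^\infty}\le\frac{\mu\beta}{1-\mu}\,\|\Gamma\|_{L^1}=:M .
\]
Since $\beta w\ge0$ while $w>0$, this yields the Riccati inequality $\tfrac{dw}{dt}\ge 3w^2-Mw=w(3w-M)$. Hypothesis (3) is exactly $w(0)>M$, so $\tfrac{dw}{dt}(0)>0$; the inequality then keeps $w(t)>M$, a range on which $3w-M>2w$ and hence $\tfrac{dw}{dt}\ge 2w^2$. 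Integrating, $w$ cannot stay finite past $t=1/(2w(0))$, so $w(t)\to+\infty$ at some finite $\tau\le 1/(2w(0))$. As $\rho\in C\bigl([0,T);H^{4-s}(I)\bigr)\hookrightarrow C^2$ keeps $\partial_x^2\rho(t,0)$ finite on $[0,T)$, this forces $T<\infty$; and provided no earlier singularity intervenes, $\lim_{t\to\tau^-}\partial_x^2\rho(t,0)=+\infty$.

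The main obstacle is not the Riccati mechanism but the rigorous justification of the identity for $dw/dt$ under only $H^{4-s}$ regularity: a priori $\partial_x^4\rho\in H^{-s}$, so the term $\rho\,\partial_x^4\rho$ is not classically defined at the origin. The remedy is that its coefficient $\rho(t,0)$ vanishes together with $\partial_x\rho(t,0)$, so the degenerate‐parabolic equation for $v=\partial_x^2\rho$, namely $\partial_t v=\rho\,\partial_x^2 v+4\,\partial_x\rho\,\partial_x v+3v^2+\cdots$, loses its top‐order contribution at $x=0$. I would make this precise by testing the $v$–equation against an approximate identity concentrated at $0$ and passing to the limit, exploiting $\rho(t,x)=O(x^2)$ and $\partial_x\rho(t,x)=O(x)$ near the origin (uniformly on compact time subintervals), in the same spirit as the Rademacher/difference‐quotient arguments used earlier for $\|1/\rho^{\varepsilon,\delta}\|_{L^\infty}$. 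This shows $t\mapsto w(t)$ is locally Lipschitz and satisfies the stated ODE almost everywhere, which is all the blow‐up argument requires.
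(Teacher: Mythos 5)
Your proposal is correct and follows essentially the same route as the paper's proof: evenness plus uniqueness to fix $\partial_x\rho(t,0)=0$, persistence of the zero $\rho(t,0)=0$ (you via a linear ODE for $\rho(t,0)$, the paper via its exponential representation of $\rho$ — the same fact), the twice-differentiated $\rho$–equation evaluated at the origin to obtain the Riccati inequality with the nonlocal term bounded through Lemma~\ref{LinftyLinfty}, and then finite-time blow-up of $\partial_x^2\rho(t,0)$. Your closing remark on the pointwise meaning of $\rho\,\partial_x^4\rho$ under the available regularity is a technical refinement the paper passes over silently, but it does not change the argument's structure.
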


\begin{figure}[ht] 
		\includegraphics[width=0.48\textwidth]{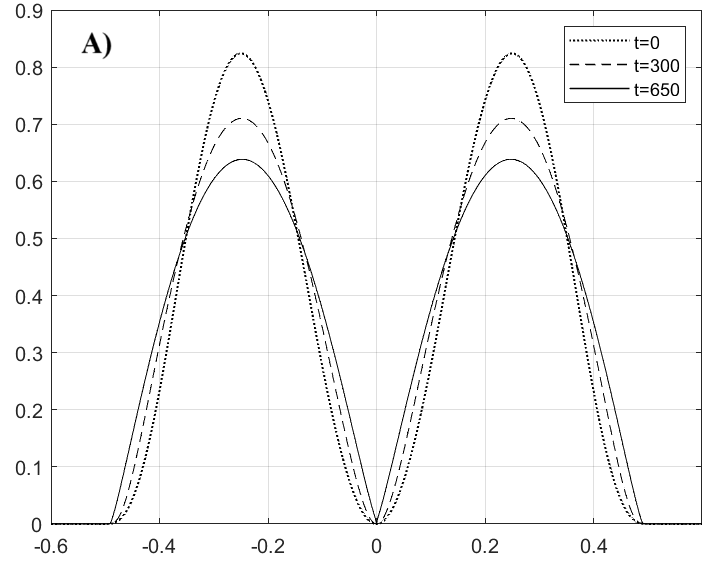}\
		\includegraphics[width=0.48\textwidth]{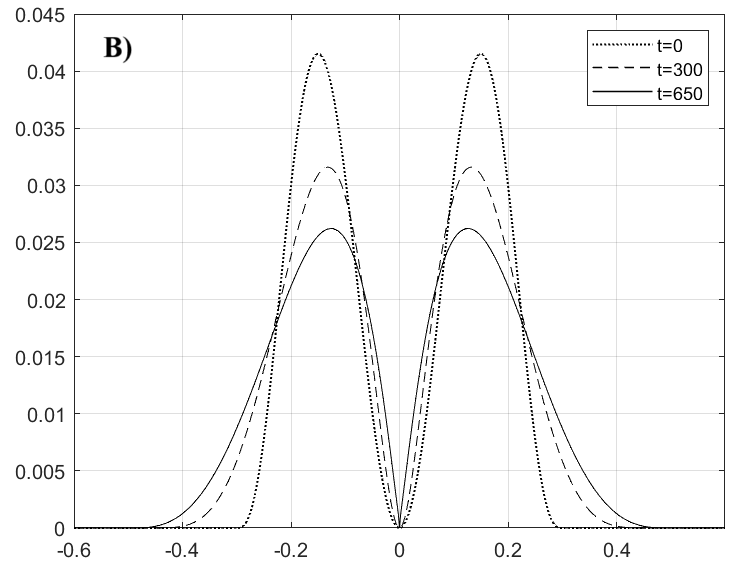}

		\includegraphics[width=0.48\textwidth]{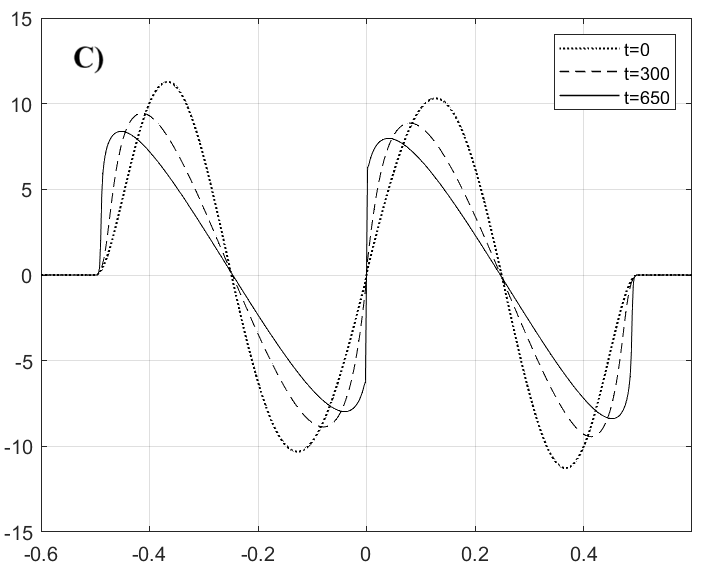}
	\caption{Representation of the density $\rho$ A), the area $A$  B), and $\partial_x \rho$  C). The time scale is on the order of $2.5 \times 10^{-6}$. The parameters of the system and the initial data: $\alpha = 1$, $\mu = 0.5$, $\beta=0.75$, $\widetilde{\beta}=0.5$, $K=1$, $\widetilde{K}=0.5$, $\Gamma=\chi_{[-\epsilon,\epsilon]}$, $\epsilon=0.05$, $\rho_0=-2000(x+0.5)^3x^2(x-0.5)^3\chi_{[-0.5,0.5]}$, $A_0=-6000(x+0.3)^3x^2(x-0.3)^3\chi_{[-0.3,0.3]}$.Study of finite-time blow-up for even initial data, as established in Theorem \ref{TheoremBlowUp}. We observe that, in finite time, the profile of $\rho$ at $x = 0$ evolves from a smooth shape to a singular one, resembling the Barenblatt-type profile. This transition leads to the formation of a vertical slope in the gradient at $x = 0$, which implies that the second derivative at that point becomes infinite.
    } \label{fig-1}
\end{figure}

\begin{proof}
	Since the initial data $(A_0,\rho_0)$ are even and the solution is unique (Theorem \ref{TheoremUniqueness}),    then $(A,\rho)(t,x)$ remains even in $x$ for all $t\in[0,T)$. Hence,  
\[
\partial_x\rho(t,0)=0
\quad\text{for all }t\in[0,T).
\]
	
	On the other hand, from the $\rho$-equation \eqref{Original} we deduce
	\begin{equation*}
		\rho(t,x)=\rho_0(x)e^{\beta\ \left(1-\frac{A\rho}{K} \right)-\alpha\rho+\mu(\rho-\langle \rho\rangle)+ \partial_{x}^2\rho+4(\partial_{x}\sqrt{\rho})^2}.
	\end{equation*}

	It follows that 
\[
\rho(t,0)=0
\quad\text{for all }t\in[0,T).
\]
Since $\rho(t,x)\ge0$, these conditions imply that $x=0$ is a global minimum of $\rho(t,\cdot)$. Therefore,
\[
\partial_{x}^{2}\rho(t,0)\;\ge\;0
\quad\text{for all }t\in[0,T).
\]

	On the other hand, $\partial_{x}^2 \rho$ satisfies 
	\begin{equation*}
		\begin{split}
			\partial_t \partial_{x}^2\rho=&\beta\partial_{x}^2\rho\left(1-\frac{A\rho}{k} \right)+2\beta \partial_{x}\rho\partial_{x}\left(1-\frac{A\rho}{K} \right)+\beta \rho\partial_{x}^2\left(1-\frac{A\rho}{K} \right)\\
			&+\mu \alpha \partial_{x}^2\rho\left(\rho-\langle \rho\rangle \right) +2\mu \alpha \partial_{x}\rho\left(\rho-\langle \rho\rangle \right)_{x}+\mu \alpha \rho\partial_{x}^2\left(\rho-\langle \rho\rangle \right)\\
			&-2\alpha\partial_{x}^2\rho\rho-2\alpha(\partial_{x}\rho)^2+ \partial_x^3\rho \partial_x\rho +3 (\partial_{x}^2 \rho)^2+3 \partial_x \rho \partial_{x}^3 \rho + \rho \partial_{x}^4 \rho  .
		\end{split}
	\end{equation*}
	Evaluating this expression at $x=0$ and using $\rho(t,0)=\partial_x\rho(t,0)=0$ together with $\partial_x^2\rho(t,0)\ge0$ yields
	\begin{equation}\label{SecondDerivative0}
		\partial_t\partial_{x}^2\rho(t,0)\geq(\partial_{x}^2\rho(t,0))^2-\alpha\mu\partial_{x}^2\rho(t,0)\langle\rho\rangle(t,0),
	\end{equation}
	for every $t\in[0,T)$.
	
	We now apply Lemma \ref{LinftyLinfty} in order to prove that the term $\langle \rho(t,0) \rangle$ verifies the bound
	\begin{equation*}
		|\langle \rho \rangle|(t,0)\leq \|\Gamma\|_{L^1}\|\rho\|_{L^\infty}\leq \|\Gamma\|_{L^1}\frac{\beta}{\alpha(1-\mu)}.
	\end{equation*}
	
	By introducing this estimate into inequality \eqref{SecondDerivative0}, we deduce 
	\begin{equation}
		\partial_t\partial_{x}^2\rho(t,0)\geq(\partial_{x}^2\rho(t,0))^2-\|\Gamma\|_{L^1}\beta\frac{\mu}{1-\mu}\partial_{x}^2\rho(t,0),
	\end{equation}
	for every $t\in[0,T)$.
	
	Finally, since
\[
\partial_{x}^2\rho_0(0) \;>\;\frac{\mu\,\beta}{1-\mu}\,\|\Gamma\|_{L^1},
\]
the differential inequality derived above entails that $\partial_x^2\rho(t,0)$ becomes unbounded in finite time (see Fig. \ref{fig-1}). This completes the proof.  
\end{proof}

\section{Dynamics of the Spatial Support}
\label{sec5}

We now examine the evolution of the spatial support of solutions. Specifically, we ask whether the positivity set 
$\mathrm{supp}\,\rho(t,\cdot)$ can grow beyond its initial domain or remains invariant. Under the regularity assumption
\[
\sqrt{\rho_0}\,\in\,H^2(I),
\]
one shows that
\[
\mathrm{supp}\,\rho(t,\cdot)
=\mathrm{supp}\,\rho_0,
\qquad
\forall\,t\in[0,T).
\]
Therefore, despite the nonlocal and nonlinear interactions in the system, the density cannot instantaneously occupy new spatial regions.

On the one hand, we show that if 
\[
\operatorname{supp}A_{0}\;\subset\;\operatorname{supp}\rho_{0},
\]
then for every $t\in[0,T)$ one has
\[
\operatorname{supp}A(t,\cdot)
=\operatorname{supp}\rho(t,\cdot).
\]
Biologically, this expresses the fact that the density cannot be positive in regions where there is no area for it to occupy. The following results make this statement precise.

\begin{proposition}\label{SupportRho}
Suppose
\[
(A_0,\rho_0)\,\in\,H^3(I)\times H^4(I),
\qquad
\sqrt{\rho_0}\,\in\,H^2(I),
\]
and let $T>0$ be such that the statements of Theorems \ref{TheoremExistence} and \ref{TheoremUniqueness} are satisfied.
Then for every $t\in[0,T)$,
\[
\operatorname{supp}\rho(t,\cdot)
\;=\;
\operatorname{supp}\rho_0.
\]
\end{proposition}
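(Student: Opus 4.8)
The plan is to factor the entire right-hand side of the $\rho$-equation, degenerate diffusion included, as a multiple of $\rho$ itself, and then to integrate the resulting pointwise linear ODE in time. Using $\partial_x(\rho\,\partial_x\rho)=\rho\,\partial_x^2\rho+(\partial_x\rho)^2$ together with the identity $(\partial_x\rho)^2=4\rho\,(\partial_x\sqrt{\rho})^2$, the density equation in \eqref{Original} rewrites as
\[
\partial_t\rho=\rho\,G,\qquad
G:=\beta\Bigl(1-\tfrac{A\rho}{K}\Bigr)-\alpha\rho+\mu\alpha(\rho-\langle\rho\rangle)+\partial_x^2\rho+4(\partial_x\sqrt{\rho})^2,
\]
which is precisely the structure underlying the exponential representation already exploited in the proof of Theorem~\ref{TheoremBlowUp}.

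First I would fix $t\in[0,T)$ and establish that $G\in L^\infty([0,t]\times I)$. Each summand is controlled by the available regularity: since $\rho\in C([0,t];H^{4-s}(I))$ and $A\in C([0,t];H^{3-s}(I))$, the one-dimensional embeddings $H^{4-s}(I)\hookrightarrow C^2(I)$ and $H^{3-s}(I)\hookrightarrow C^1(I)$ give uniform bounds on $\rho$, $A$, $\partial_x^2\rho$, and on $\langle\rho\rangle=\Gamma*\rho$ via Young's inequality. The only delicate term is $4(\partial_x\sqrt{\rho})^2$: away from the zero set it is harmless, but near points where $\rho$ vanishes the naive expression $\partial_x\rho/(2\sqrt{\rho})$ is not obviously bounded. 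This is exactly where the hypothesis $\sqrt{\rho_0}\in H^2(I)$ enters: by Lemma~\ref{RootLemma} one has $\sqrt{\rho}\in L^\infty([0,t];H^2(I))$, and since $H^2(I)\hookrightarrow C^1(I)$ this yields $\partial_x\sqrt{\rho}\in L^\infty([0,t]\times I)$, hence a uniform bound on $(\partial_x\sqrt{\rho})^2$ valid up to and including the boundary of $\operatorname{supp}\rho$.

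Next I would read $\partial_t\rho=\rho\,G$ as a linear ODE in $t$ at each fixed $x$. The regularity $\partial_t\rho\in C((0,T);H^{2-s}(I))$ from the Remark following Theorem~\ref{TheoremExistence}, together with $H^{2-s}(I)\hookrightarrow C^0(I)$, makes $t\mapsto\rho(t,x)$ absolutely continuous for every $x$, so integrating gives the mild (integral) representation
\[
\rho(t,x)=\rho_0(x)\,\exp\!\left(\int_0^t G(s,x)\,ds\right),\qquad (t,x)\in[0,T)\times I.
\]
Because $G$ is bounded on $[0,t]\times I$, the exponential factor is finite and strictly positive; consequently $\rho(t,x)=0$ if and only if $\rho_0(x)=0$. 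Thus the open sets $\{\rho(t,\cdot)\neq0\}$ and $\{\rho_0\neq0\}$ coincide, and taking closures yields $\operatorname{supp}\rho(t,\cdot)=\operatorname{supp}\rho_0$ for every $t\in[0,T)$.

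I expect the main obstacle to be the rigorous control of $(\partial_x\sqrt{\rho})^2$ on the zero set of $\rho$, and the attendant justification that the pointwise ODE/integral representation is legitimate there. Both are resolved by the $H^2$-regularity of $\sqrt{\rho}$ (Lemma~\ref{RootLemma}), which guarantees that $\partial_x\sqrt{\rho}$ is a genuinely bounded, continuous function rather than a quantity that degenerates at the boundary of the support. This is the structural reason why the square-root hypothesis, and not mere positivity, is the correct assumption: it is exactly what prevents the diffusive exponent from becoming singular precisely where a zero of $\rho_0$ might otherwise be created or destroyed.
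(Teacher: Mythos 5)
Your proof is correct and takes essentially the same approach as the paper: both factor the right-hand side of the $\rho$-equation as $\rho\,G$, invoke Lemma~\ref{RootLemma} (the $H^2$-regularity of $\sqrt{\rho}$) to ensure the exponent, in particular the term $4(\partial_x\sqrt{\rho})^2$, stays bounded, and conclude from the exponential representation that the zero set of $\rho_0$ is exactly preserved. Your write-up is in fact more careful than the paper's one-line proof, which displays the exponential factor without the time integral $\int_0^t G(s,x)\,ds$ that your version correctly includes, and which leaves the pointwise ODE justification implicit.
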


\begin{proof}
	Writing
\[
\rho(t,x)
= \rho_0(x)\exp\Bigl[\,
\beta\Bigl(1-\tfrac{A\rho}{K}\Bigr)
-\alpha\,\rho
+\mu\bigl(\rho-\langle\rho\rangle\bigr)
+\partial_x^2\rho
+4\bigl(\partial_x\sqrt{\rho}\bigr)^2
\Bigr],
\]
and recalling the regularity of $A,\rho,\eta$ from Theorem \ref{TheoremExistence} and Lemma \ref{RootLemma}, we see that
$\rho(t,x)=0$ if and only if $\rho_0(x)=0$.
\end{proof}

\begin{proposition}\label{SupportA}
Under the hypotheses of Theorem \ref{TheoremExistence} with $m\ge4$, the following properties hold on $[0,T)$:
\begin{enumerate}[label=\roman*)]
\item Lower bound on $A$: Define
   \[
     m_A(t) \;=\; \inf_{x\in I} A(t,x).
   \]
   Then there exists a constant
   \[
     C \;=\; C\bigl(\|A\|_{L^\infty([0,T)\times I)},\,\|\rho\|_{L^\infty([0,T)\times I)}\bigr)
   \]
   such that
   \[
     m_A(t)\;\ge\;m_A(0)\,e^{-Ct},
     \quad
     \forall\,t\in[0,T).
   \]

\item Invariance of support: Assume that $\operatorname{supp}\rho_0$ is a closed interval whose interior satisfies $\rho_0(x)>0$, for all $x\in\operatorname{int}(\operatorname{supp}\rho_0)$, and that 
$\operatorname{supp}A_0\subset\operatorname{supp}\rho_0$.  Then for every $t\in[0,T)$, we have 
   \[
     \operatorname{supp}A(t,\cdot)
     \;=\;
     \operatorname{supp}\rho_0.
   \]
   \end{enumerate}
\end{proposition}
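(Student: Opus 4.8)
The plan is to establish the two assertions separately, using throughout the key consequence of Proposition \ref{SupportRho} that $\operatorname{supp}\rho(t,\cdot)=\operatorname{supp}\rho_0$ for every $t\in[0,T)$. For the lower bound (i) I would track the minimum $m_A(t)=\inf_x A(t,x)$ exactly as the map $t\mapsto\|1/\rho^{\varepsilon,\delta}\|_{L^\infty}$ was handled in Lemma \ref{LemmaExistenceDelta1}. Using the $C([0,T);H^3)$-regularity of $A$ together with periodicity, for each $t$ there is a point $x_t$ realizing the minimum, and Rademacher's theorem shows $t\mapsto m_A(t)$ is Lipschitz, hence differentiable a.e.\ with $\tfrac{d}{dt}m_A(t)=\partial_t A(t,x_t)$. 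At $x_t$ one has $\partial_x A(t,x_t)=0$ and $\partial_x^2 A(t,x_t)\ge0$, so since $\rho\ge0$ the diffusion term $\partial_x(\rho\partial_x A)(t,x_t)=\rho(t,x_t)\partial_x^2 A(t,x_t)\ge0$. Writing the reaction contribution as $A\,g$ with $g=\alpha(1-\mu)\rho+\mu\alpha\langle\rho\rangle+\widetilde\beta\bigl(1-\tfrac{\rho A}{\widetilde K}\bigr)\ge -C$ for a constant $C$ depending only on $\|A\|_{L^\infty}$ and $\|\rho\|_{L^\infty}$, and recalling $A\ge0$ from Theorem \ref{TheoremExistence}, I obtain $\partial_t A(t,x_t)\ge -C\,m_A(t)$, whence Gr\"onwall's inequality gives $m_A(t)\ge m_A(0)e^{-Ct}$.

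For (ii) I prove the two inclusions. The inclusion $\operatorname{supp}A(t)\subset\operatorname{supp}\rho_0$ is an ODE argument: on the open set $I\setminus\operatorname{supp}\rho_0$ one has $\rho\equiv0$ and therefore $\partial_x\rho\equiv0$, so the transport term $\partial_x(\rho\partial_x A)$ vanishes identically there. Hence for each fixed $x_0\notin\operatorname{supp}\rho_0$ the map $t\mapsto A(t,x_0)$ solves the linear ODE $\tfrac{d}{dt}A=(\mu\alpha\langle\rho\rangle+\widetilde\beta)A$ with bounded coefficient, giving $A(t,x_0)=A_0(x_0)\exp\bigl(\int_0^t(\mu\alpha\langle\rho\rangle+\widetilde\beta)\,ds\bigr)$, which vanishes because $A_0(x_0)=0$ (recall $\operatorname{supp}A_0\subset\operatorname{supp}\rho_0$).

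The reverse inclusion is the crux. On $(a,b)=\operatorname{int}(\operatorname{supp}\rho_0)$ we have $\rho>0$, so on every compact subinterval $[a',b']\Subset(a,b)$ there is $c>0$ with $\rho\ge c$, and the equation written as $\partial_t A=\rho\,\partial_x^2 A+\partial_x\rho\,\partial_x A+c_0(t,x)A$, with $c_0$ bounded, is uniformly parabolic with bounded coefficients. Arguing by contradiction, suppose $A(t_0,x_0)=0$ for some $t_0\in(0,T)$ and $x_0\in(a,b)$. Choosing $[a',b']\Subset(a,b)$ containing $x_0$ and applying the parabolic strong maximum principle (the nonnegative solution $A$ attains its minimum value $0$ at an interior spatial point) forces $A\equiv0$ on $[0,t_0]\times(a',b')$; in particular the set $Z=\{x\in(a,b):A(t_0,x)=0\}$ is open. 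Since $Z$ is also closed in $(a,b)$ by continuity and $(a,b)$ is connected, either $Z=\emptyset$ or $Z=(a,b)$, and the latter would force $A_0\equiv0$ on $(a,b)$, hence $A_0\equiv0$ on $I$, contradicting $\operatorname{supp}A_0\neq\emptyset$. Thus $A(t,\cdot)>0$ on $(a,b)$ for every $t\in(0,T)$, giving $\operatorname{supp}A(t)\supset[a,b]=\operatorname{supp}\rho_0$, which combined with the first inclusion yields the asserted equality.

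I expect the main obstacle to be the rigorous application of the strong maximum principle, for two reasons. First, parabolicity degenerates at the endpoints of $\operatorname{supp}\rho_0$, where $\rho\to0$, which is precisely why one must localize to compactly contained subintervals before invoking the principle and then recover the global conclusion through the open--closed connectedness argument; this propagation of positivity from the (possibly small) set $\operatorname{supp}A_0$ across the whole interval is the conceptual heart of the proof. Second, the coefficient $c_0(t,x)$ collects the nonlocal term $\langle\rho\rangle$ and the nonlinear reaction, so one must verify it is bounded and measurable---which follows from the regularity furnished by Theorem \ref{TheoremExistence} and Lemma \ref{RootLemma}---and, if its sign is inconvenient, absorb it by passing to $e^{-\lambda t}A$ for $\lambda$ large before applying the principle.
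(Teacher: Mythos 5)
Your proposal is correct, and parts of it coincide with the paper's proof: part (i) is the same Rademacher-based tracking of the spatial minimum (diffusion term discarded by sign at $x_t$, Gr\"onwall), and the inclusion $\operatorname{supp}A(t)\subset\operatorname{supp}\rho_0$ is the same pointwise linear ODE argument on the open set where $\rho\equiv0$. The genuine difference is in the reverse inclusion. The paper fixes a compact $D$ with $\operatorname{supp}A_0\subset D\subset\operatorname{supp}\rho_0$, splits it into $\Omega=\{A_0\rho_0<\widetilde{K}/2\}$ and its complement, keeps positivity on the complement via the mechanism of (i), and on $\Omega$ uses continuity to ensure $A\rho\le \widetilde{K}$ on a short interval $[0,\tau)$, so that $A$ is a nonnegative supersolution of the pure diffusion operator $\partial_t-\partial_x(\rho\,\partial_x\cdot)$; the minimum principle then yields positivity on $(0,\tau)\times\Omega$, extended to $[0,T)$ by iteration. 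You instead keep the full equation, treat the reaction as a bounded zeroth-order coefficient (handled by the $e^{-\lambda t}$ rescaling), apply the strong maximum principle at an arbitrary $t_0\in(0,T)$ on compact subintervals where $\rho$ is bounded below, and spread positivity by the open--closed connectedness argument, isolating the degenerate alternative $A_0\equiv 0$. Your route needs no short-time restriction or iteration, and it avoids the paper's somewhat loose appeal to (i) on $D\setminus\Omega$ (as stated, (i) controls only the global infimum over $I$, which vanishes when $\operatorname{supp}A_0\subsetneq I$); the paper's route, in exchange, never invokes the maximum principle with a zeroth-order term. Two shared caveats you could state explicitly: the equality of supports can only hold for $t\in(0,T)$ when $\operatorname{supp}A_0\subsetneq\operatorname{supp}\rho_0$, and both arguments implicitly require $A_0\not\equiv0$, an assumption your contradiction step at least makes visible.
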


\begin{figure}[ht] 
	\includegraphics[width=0.48\textwidth]{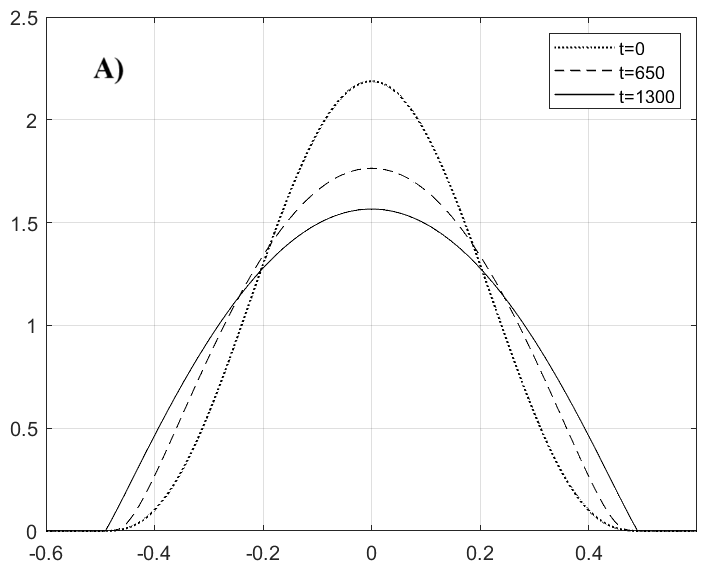}\
	\includegraphics[width=0.48\textwidth]{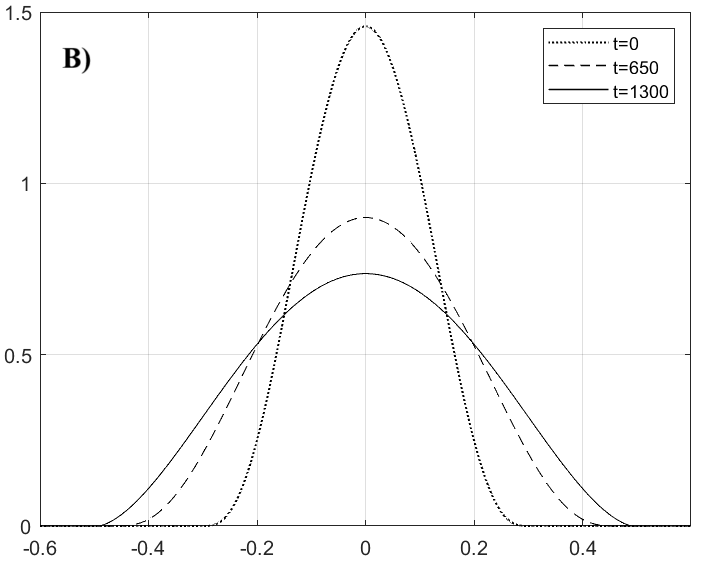}
	
	\includegraphics[width=0.48\textwidth]{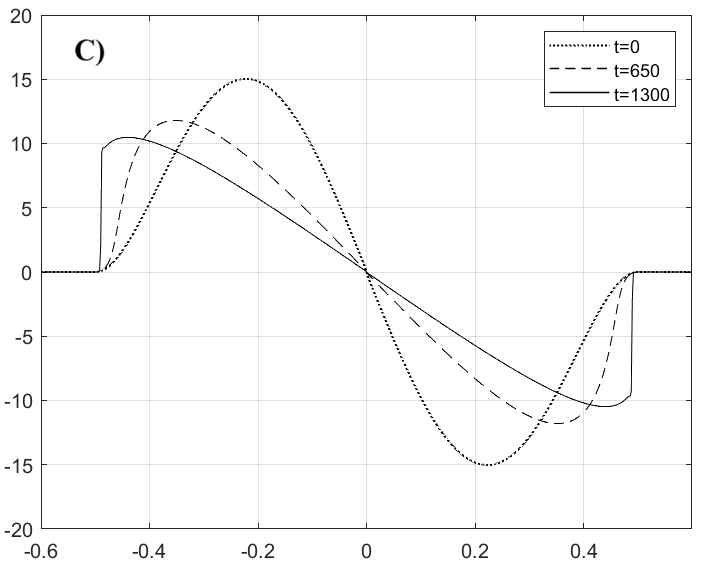}
	\caption{Representation of the density $\rho$  A), the area $A$  B), and $\partial_x \rho$  C). The time scale is on the order of $2.5 \times 10^{-6}$. The parameters of the system and the initial data: $\alpha = 1$, $\mu = 0.5$, $\beta=0.75$, $\widetilde{\beta}=0.5$, $K=1$, $\widetilde{K}=0.5$, $\Gamma=\chi_{[-\epsilon,\epsilon]}$, $\epsilon=0.05$, $\rho_0=-140(x+0.5)^3(x-0.5)^3\chi_{[-0.5,0.5]}$, $A_0=-2000(x+0.3)^3(x-0.3)^3\chi_{[-0.3,0.3]}$. Study of  the dynamics of  supports, as analyzed in  Theorem \ref{TheoremSupport}. We observe that the support of $\rho$ remains unchanged over time until the blow-up of the derivative occurs. During this waiting period, the support of $A$ evolves from coinciding with the initial support to expands instantaneously and matches that of $\rho$, after which it also remains unchanged. As $\rho$ begins to change its shape towards a Barenblatt-type profile, its derivative becomes increasingly steep. Eventually, at the boundaries of the support of $\rho$, the derivative  starts exhibiting a near-vertical slope, indicating that the second derivative blows up at the boundary.} \label{fig-2}
\end{figure}

\begin{proof}
  To prove $i)$, we return to the estimate underlying \eqref{Positive}. Let $x_t\in I$ be such that $A(t,x_t)=m_A(t)$. Then, from the evolution equation for $A$,
\[
\frac{d}{dt}m_A(t)
=\partial_t A(t,x_t)
\;\ge\;
-\,\frac{\tilde \beta}{
\tilde k} \|A\|_{L^\infty([0,T)\times I)}\;\|\rho\|_{L^\infty([0,T)\times I)}\;m_A(t).
\]
Setting 
\[
C \;=\;\frac{\tilde \beta}{
\tilde k} \|A\|_{L^\infty([0,T)\times I)}\,\|\rho\|_{L^\infty([0,T)\times I)},
\]
this gives
\[
\frac{d}{dt}m_A(t)\;\ge\;-C\,m_A(t).
\]
An application of Grönwall’s inequality then yields
\[
m_A(t)\;\ge\;m_A(0)\,e^{-Ct},
\quad
\forall\,t\in[0,T),
\]
and the desired bound follows from the regularity of the solution.  

To prove $ii)$, we first observe that if 
$\operatorname{supp}A_{0}=\operatorname{supp}\rho_{0}$, 
then property (i) immediately yields 
$\operatorname{supp}\rho_{0}\subset \operatorname{supp}A(t)$ for all $t$.  
Hence we assume 
$\operatorname{supp}A_{0}\subsetneq\operatorname{supp}\rho_{0}$ 
and fix any compact set 
\[
D\quad\text{with}\quad
\operatorname{supp}A_{0}\;\subset\;D\;\subset\;\operatorname{supp}\rho_{0}.
\]
We will show 
$\operatorname{supp}\rho(t)\subset\operatorname{supp}A(t)$ 
for every $t\in[0,T)$.  

Let 
\[
\Omega \;=\;\bigl\{\,x\in D : A_{0}(x)\,\rho_{0}(x)<\tfrac{\widetilde K}{2}\bigr\}.
\]
On the complement $D\setminus\Omega$, we have $A_{0}\rho_{0}\ge\tfrac{\widetilde K}{2}$, so by (i) and the uniform $L^\infty$‐bounds it follows that  
\[
A(t,x)>0
\quad\text{for all }(t,x)\in[0,T)\times\bigl(D\setminus\Omega\bigr).
\]
On $\Omega$, continuity of $A$ and $\rho$ implies there is a time $\tau>0$ such that 
$\,A(t,x)\,\rho(t,x)\le K$\,
for all $(t,x)\in[0,\tau)\times\Omega$.  Hence on $\Omega$ the equation for $A$ reads
\[
\partial_{t}A-\partial_{x}\bigl(\rho\,\partial_{x}A\bigr)\;\ge\;0
\quad\text{in }[0,\tau)\times\Omega.
\]
Since $D$  is compact and the coefficients are continuous, the parabolic minimum‐principle implies 
\[
A(t,x)>0
\quad\text{for all }(t,x)\in(0,\tau)\times\Omega.
\]
Combining this with positivity on $D\setminus\Omega$ shows 
$A(t,x)>0$ on $(0,\tau)\times D$.  Finally, applying (i) again extends this to all $t\in[0,T)$, proving 
$\operatorname{supp}\rho(t)\subset\operatorname{supp}A(t)$.  

Conversely, if $x_{0}\notin\operatorname{supp}\rho_{0}$, there exists an open interval $I_{x_0}$ such that $\rho(t,x)=0$ for all $t$ and for every $x\in I_{x_0}$. Then, from the $A$‐equation one sees
\[
\partial_{t}A(t,x)
= A(t,x)\Bigl[\alpha\,\rho-\mu\alpha(\rho-\langle\rho\rangle)
+\widetilde\beta\,A\Bigl(1-\tfrac{\rho\,A}{\widetilde K}\Bigr)\Bigr](t,x),
\]
for every $x\in I_{x_0}$
In particular, since we assume that $\text{supp}(A_0)\subset \text{supp}(\rho_0)$, we  deduce that $A(t,x)=0$ for every  $(t,x)\in[0,T)\times I_{x_0}$, so $A(t,x)=A_{0}(x)=0$ in $\in[0,T)\times I_{x_0}$.  Hence $x_{0}\notin\operatorname{supp}A(t)$.  This shows 
$\operatorname{supp}A(t)\subset\operatorname{supp}\rho_{0}$.  

Together, the two inclusions prove  
\[
\operatorname{supp}A(t)=\operatorname{supp}\rho_{0},
\quad
\forall\,t\in(0,T).
\]
\end{proof}

\bibliography{biblioart}

\begin{thebibliography}{10}

\bibitem{Alasio2022}
L.~Alasio, M.~Bruna, S.~Fagioli, and S.~Schulz.
\newblock Existence and regularity for a system of porous medium equations with small cross-diffusion and nonlocal drifts.
\newblock {\em Nonlinear Analysis}, 223:113064, 2022.

\bibitem{basan2009homeostatic}
M.~Basan, T.~Risler, J.-F. Joanny, X.~Sastre-Garau, and J.~Prost.
\newblock Homeostatic competition drives tumor growth and metastasis nucleation.
\newblock {\em HFSP J.}, 3(4):265--272, 2009.

\bibitem{bellomo2022chemotaxis}
N.~Bellomo, N.~Outada, J.~Soler, Y.~Tao, and M.~Winkler.
\newblock Chemotaxis and cross-diffusion models in complex environments: Models and analytic problems toward a multiscale vision.
\newblock {\em Mathematical Models and Methods in Applied Sciences}, 32(04):713--792, 2022.

\bibitem{Blanco2021}
B.~Blanco, J.~Campos, J.~Melchor, and J.~Soler.
\newblock Modeling interactions among migration, growth and pressure in tumor dynamics.
\newblock {\em Mathematics}, 9(12):1376, 2021.

\bibitem{bulivcek2019large}
M.~Bul{\'\i}{\v{c}}ek and J.~M{\'a}lek.
\newblock Large data analysis for kolmogorov’s two-equation model of turbulence.
\newblock {\em Nonlinear Analysis: Real World Applications}, 50:104--143, 2019.

\bibitem{campos2025biomechanical}
J.~Campos, C.~Pulido, and J.~Soler.
\newblock Biomechanical effects on traveling waves at the interface of cell populations.
\newblock {\em Journal of Nonlinear Science}, 35(4):87, 2025.

\bibitem{chen2018global}
X.~Chen, E.~S. Daus, and A.~J{\"u}ngel.
\newblock Global existence analysis of cross-diffusion population systems for multiple species.
\newblock {\em Archive for Rational Mechanics and Analysis}, 227(2):715--747, 2018.

\bibitem{Cintra2018}
W.~Cintra, C.~Morales-Rodrigo, and A.~Su{\'a}rez.
\newblock Coexistence states in a cross-diffusion system of a predator--prey model with predator satiation term.
\newblock {\em Math. Models Methods Appl. Sci.}, 28(11):2131--2159, Oct. 2018.

\bibitem{cuvillier2023well}
O.~Cuvillier, F.~Fanelli, and E.~Salguero.
\newblock Well-posedness of the kolmogorov two-equation model of turbulence in optimal sobolev spaces.
\newblock {\em Journal of Evolution Equations}, 23(4):68, 2023.

\bibitem{fanelli2022finite}
F.~Fanelli and R.~Granero-Belinch{\'o}n.
\newblock Finite time blow-up for some parabolic systems arising in turbulence theory.
\newblock {\em Zeitschrift f{\"u}r angewandte Mathematik und Physik}, 73(5):180, 2022.

\bibitem{Fanelli2023}
F.~Fanelli and R.~Granero-Belinch{\'o}n.
\newblock Well-posedness and singularity formation for the kolmogorov two-equation model of turbulence in 1-d.
\newblock {\em Journal of Dynamics and Differential Equations}, pages 1--39, 2023.

\bibitem{Hartman2002}
P.~Hartman.
\newblock {\em Ordinary differential equations}, volume~38 of {\em Classics in Applied Mathematics}.
\newblock Society for Industrial and Applied Mathematics (SIAM), Philadelphia, PA, 2002.

\bibitem{kosewski2025local}
P.~Kosewski.
\newblock Local well-posedness of kolmogorov's two-equation model of turbulence in fractional sobolev spaces.
\newblock {\em Mathematical Methods in the Applied Sciences}, 48(5):5872--5895, 2025.

\bibitem{Winkler2025}
G.~Li and M.~Winkler.
\newblock Continuous solutions for a two-dimensional cross-diffusion problem involving doubly degenerate diffusion and logistic proliferation.
\newblock {\em Analysis and Applications}, 23(04):489--510, 2025.

\bibitem{Mielke2023}
A.~Mielke.
\newblock On two coupled degenerate parabolic equations motivated by thermodynamics.
\newblock {\em J. Nonlinear Sci.}, 33(3), June 2023.

\bibitem{mielke2023two}
A.~Mielke.
\newblock On two coupled degenerate parabolic equations motivated by thermodynamics.
\newblock {\em Journal of Nonlinear Science}, 33(3):42, 2023.

\bibitem{mielke2022existence}
A.~Mielke and J.~Naumann.
\newblock On the existence of global-in-time weak solutions and scaling laws for kolmogorov's two-equation model for turbulence.
\newblock {\em ZAMM-Journal of Applied Mathematics and Mechanics/Zeitschrift f{\"u}r Angewandte Mathematik und Mechanik}, 102(9):e202000019, 2022.

\bibitem{Painter2019}
K.~J. Painter.
\newblock Mathematical models for chemotaxis and their applications in self-organisation phenomena.
\newblock {\em J. Theor. Biol.}, 481:162--182, Nov. 2019.

\bibitem{ranft2014mechanically}
J.~Ranft, M.~Aliee, J.~Prost, F.~J{\"u}licher, and J.-F. Joanny.
\newblock Mechanically driven interface propagation in biological tissues.
\newblock {\em New J. Phys.}, 16(3):035002, 2014.

\bibitem{Rodriguez2020}
N.~Rodr{\'\i}guez and M.~Winkler.
\newblock Relaxation by nonlinear diffusion enhancement in a two-dimensional cross-diffusion model for urban crime propagation.
\newblock {\em Math. Models Methods Appl. Sci.}, 30(11):2105--2137, Oct. 2020.

\bibitem{Short2008}
M.~B. Short, M.~R. D'orsogna, V.~B. Pasour, G.~E. Tita, P.~J. Brantingham, A.~L. Bertozzi, and L.~B. Chayes.
\newblock A statistical model of criminal behavior.
\newblock {\em Math. Models Methods Appl. Sci.}, 18:1249--1267, Aug. 2008.

\bibitem{Shraiman2005}
B.~I. Shraiman.
\newblock Mechanical feedback as a possible regulator of tissue growth.
\newblock {\em Proceedings of the National Academy of Sciences}, 102(9):3318--3323, 2005.

\bibitem{Simon1986}
J.~Simon.
\newblock Compact sets in the space {$L^{p}(O,T;B)$}.
\newblock {\em Annali di Matematica Pura ed Applicata}, 146:65–96, 1986.

\bibitem{Stancevic2013}
O.~Stancevic, C.~N. Angstmann, J.~M. Murray, and B.~I. Henry.
\newblock Turing patterns from dynamics of early {HIV} infection.
\newblock {\em Bull. Math. Biol.}, 75(5):774--795, May 2013.

\bibitem{Tania2012}
N.~Tania, B.~Vanderlei, J.~P. Heath, and L.~Edelstein-Keshet.
\newblock Role of social interactions in dynamic patterns of resource patches and forager aggregation.
\newblock {\em Proc. Natl. Acad. Sci. U. S. A.}, 109(28):11228--11233, 2012.

\bibitem{Tao2019}
Y.~Tao and M.~Winkler.
\newblock Large time behavior in a forager--exploiter model with different taxis strategies for two groups in search of food.
\newblock {\em Math. Models Methods Appl. Sci.}, 29(11):2151--2182, Oct. 2019.

\bibitem{Tello2016}
J.~I. Tello and D.~Wrzosek.
\newblock Predator--prey model with diffusion and indirect prey-taxis.
\newblock {\em Math. Models Methods Appl. Sci.}, 26(11):2129--2162, Oct. 2016.

\bibitem{Tsyganov2003}
M.~A. Tsyganov, J.~Brindley, A.~V. Holden, and V.~N. Biktashev.
\newblock Quasisoliton interaction of pursuit-evasion waves in a predator-prey system.
\newblock {\em Phys. Rev. Lett.}, 91(21):218102, Nov. 2003.

\bibitem{vazquez2007porous}
J.~L. V{\'a}zquez.
\newblock {\em The porous medium equation: mathematical theory}.
\newblock Oxford University Press, 2007.

\end{thebibliography}
\bibliographystyle{abbrv}
\end{document}